\newcommand{\N}{\mathbb{N}}
\newcommand{\Prop}{\mathsf{Prop}}
\newcommand{\Form}{\mathsf{Form}}
\newcommand{\PForm}{\mathsf{PForm}}
\newcommand{\model}[1]{\mathcal{#1}}
\newcommand{\iimplies}{\rightarrow}
\newcommand{\struc}[1]{\langle #1 \rangle}
\newcommand{\Val}{\mathrm{Val}}
\newcommand{\MV}{\mathcal{MV}}
\DeclareSymbolFont{forpolishl}{T1}{cmr}{m}{n}
\DeclareMathSymbol{\lucas}{0}{forpolishl}{'212}
\newcommand{\Mod}{\mathrm{Mod}}
\newcommand{\tr}{\mathrm{tr}}
\newcommand{\idempo}[1]{\mathfrak{B}(#1)}
\newcommand{\logic}[1]{\mathsf{\mathbf{#1}}}
\newcommand{\lang}{\mathcal{L}}
\newcommand{\var}[1]{\mathcal{#1}}
\newcommand{\class}[1]{\var{#1}}
\newcommand{\alg}[1]{\mathrm{\bf #1}}
\newcommand{\classop}[1]{\mathbb{#1}}
\newcommand{\cate}[1]{\mathcal{#1}}
\newcommand{\framme}[1]{\mathfrak{#1}}
\newcommand{\bfphi}{\boldsymbol{\phi}}
\newcommand{\bfpsi}{\boldsymbol{\psi}}
\newcommand{\bfw}{\mathbf{w}}
\newcommand{\bfv}{\mathbf{v}}
\newcommand{\bfx}{\mathbf{x}}
\newcommand{\bfy}{\mathbf{y}}
\newcommand{\bfa}{\mathbf{a}}
\newcommand{\bfone}{\mathbf{1}}
\newcommand{\bfalpha}{\boldsymbol{\alpha}}
\newcommand{\disjun}{\biguplus}
\newcommand{\Ce}{\mathfrak{C}e}
\newcommand\restr[2]{{% we make the whole thing an ordinary symbol
  \left.\kern-\nulldelimiterspace % automatically resize the bar with \right
  #1 % the function
% \vphantom{\big|} % pretend it's a little taller at normal size
  \right|_{#2} % this is the delimiter
  }}
\newtheorem{thm}{Theorem}[section]
\newtheorem{cor}[thm]{Corollary}
\newtheorem{lem}[thm]{Lemma}
\newtheorem{prop}[thm]{Proposition}
\theoremstyle{definition}
\newtheorem{defn}[thm]{Definition}
\newtheorem{ex}[thm]{Example}
\theoremstyle{remark}
\newtheorem{rem}[thm]{Remark}
\numberwithin{equation}{section}
\definecolor{gris10}{gray}{0.9}
\definecolor{gris}{gray}{0.5}
\title{Modal definability based on \L ukasiewicz  validity relations}
\author[B. Teheux]{Bruno Teheux}
\address{Mathematics Research Unit, FSTC, University of Luxembourg, 6, Rue Coudenhove-Kalergi, L-1359 Luxembourg, Luxembourg}
 \email{bruno.teheux@uni.lu}
\thanks{The author was supported  by the internal research project  F1R-MTH-PUL-15MRO3 of the University of Luxembourg.}
\keywords{Modal logic, many-valued logic, \textsc{\L ukasiewicz} logic, \textsc{Kripke} semantics,
  relational semantics,  \textsc{Goldblatt}~-~\textsc{Thomason} theorem, modal definability}
\date{\today}
\begin{document}
\begin{abstract}
We study two notions of definability for classes of relational structures based on modal extensions of \textsc{\L ukasiewicz} finitely-valued logics. The main results of the paper are the equivalent of the \textsc{Goldblatt}~-~\textsc{Thomason} theorem for these notions of definability.
\end{abstract}
\maketitle
%\tableofcontents

%%%%%%%%%%%%%%%%%%%
%   Introduction  % 
%%%%%%%%%%%%%%%%%%%
\section{Introduction}

The language of modal logic is recognized as being efficient to talk about relational structures~-~for instance, see Slogan 1 in the preface of  \cite{Venema2001}. Actually, the connection between the modal language and relational structures is twofold. On the one hand, relational semantics help in the study of the \emph{deductive properties} of normal modal logics. The problem generally addressed is the following: given a modal logic $\logic{L}$, find a class of relational structures with respect to which $\logic{L}$ is complete. On the other hand, the modal language is used as a \emph{descriptive language}. There are at least two types of results to characterize the ability of the modal language to describe relational structures, in other words, to characterize its expressive power.

First, one can regard the expressive power of  the modal language as its ability to \emph{distinguish between worlds} in relational structures.  For example, the \textsc{van Benthem} theorem \cite{van_benthem1976} states that the modal language is the bisimulation-invariant fragment of first order logic.

A second approach to the expressive power of the modal language is to consider its  ability to \emph{distinguish between frames}, that is, its ability to define classes of frames. In this respect, one of the most renowned results is the \textsc{Goldblatt}~-~\textsc{Thomason} theorem \cite{Goldlblatt1975} that characterizes,  in terms of certain closure conditions, those first order definable classes of relational structures that are also definable by modal formulas. 

The notion of modal definability is based on the validity relation $\models$. This relation contains $(\framme{F}, \phi)$ (in notation $\framme{F}\models \phi$) whenever $\phi$ is a modal formula that is true in any model based on the structure $\framme{F}$. A class $\mathcal{C}$ of relational structures is \emph{modally definable} if there is a set $\Phi$ of modal formulas such that $\mathcal{C}=\{\framme{F} \mid \framme{F} \models \Phi\}$. Thus,  any change in the definition of  the validity relation affects the notion of modal definability.

One way to modify the validity relation is to tweak the definition of \emph{a model based on a structure}, that is, to change the set of possible valuations that can be added to a structure to turn it into a model. In  this paper, we study modal definability for validity relations defined with a notion of models in which formulas are evaluated in a finite set of truth-values using \textsc{\L ukasiewicz} interpretation of the connectors $\neg$ and $\iimplies$.     

The paper is organized as follows. In the second section, we introduce the modal language $\lang$ that we consider in the remainder of the paper, as well as two classes of relational structures  to interpret this language. The first one is the class of $\mathcal{L}$-frames and the second one is the class of $\mathcal{L}_n$-frames. The latter are $\lang$-frames in which the set of allowed  truth values is specified in each world.   These two classes of structures give rise to two validity relations, and to two corresponding notions of definability. In the third section, we develop some constructions  for $\lang$-frames and $\lang_n$-frames that can be used to test definability of classes of structures. The section is centered on the notion of canonical extension of structures. In the fourth section, we obtain many-valued versions of the \textsc{Goldblatt}~-~\textsc{Thomason} theorem, which constitute the main results of the paper (Theorems \ref{thm:main01} and \ref{thm:main02}). We conclude the paper by a section presenting some final remarks and topics for further research. Also, the proof of the Truth Lemma for $k$-ary modalities is given in an Appendix.

We use many results that were previously obtained  for modal extensions of \textsc{\L ukasiewicz} $(n+1)$-valued logic,  trying to avoid duplicating existing proofs by referring to the literature as far as possible without jeopardizing understanding. In particular, the present paper relies heavily on \cite{Teheux2012}, which should be considered as a companion paper. Moreover, we generalize some of the standard tools and techniques of Boolean modal logic, which eventually allows us to follow almost verbatim the original proof of   the \textsc{Goldblatt}~-~\textsc{Thomason} theorem to obtain Theorems \ref{thm:main01} and \ref{thm:main02}.

\section{From $\lucas_n$-valued models to definability}\label{sec:02}
Let $\lang=\{\neg, \iimplies, 1\}\cup\{\nabla_i \mid i\in I\}$ be a language, where $\neg$ is unary, $\iimplies$ is binary, $1$ is  constant and $\nabla_i$ is $k_i$-ary with $k_i\geq 1$ for every $i \in I$. Connectors in $\{\nabla_i\mid i \in I\}$ are considered as $k_i$-ary universal modalities. The set $\Form_\lang$ of formulas is defined by induction from an infinite set of propositional variables $\Prop$ using the grammar
\[
\phi::=p \ \vert\ 0 \ \vert\ \neg\phi\ \vert \ \phi\iimplies \phi \ \vert \ \nabla_i(\phi, \ldots, \phi),
\]
where $p \in \Prop$ and $i \in I$.
If no additional information is given, by \emph{a formula} we mean an element of $\Form_{\lang}$. We sometimes write $\phi(p_1, \ldots, p_k)$ to stress that $\phi$ is a formula whose propositional variables are among $p_1, \ldots, p_k$. 
When we write `let $\nabla$ be a $k$-ary modality' we mean `let $i\in I$  and $k\in \N$ such that $\nabla=\nabla_i$ and $k=k_i$'. In the examples, we often use the language $\lang_{\square}$ that contains only one  modal connector $\square$, which is unary. 

We use the customary abbreviations in \textsc{\L ukasiewicz} logic: we write $p\oplus q$ for $\neg p \iimplies q$, $p \odot q$ for $\neg (\neg p \oplus \neg q)$, $x\vee y$ for $(y\odot \neg x)\oplus x$, $x\wedge y$ for $(y \oplus \neg x)\odot x$, and $0$ for $\neg 1$. We assume associativity of $\oplus$ and $\odot$ and we denote by $k.p$ and $p^k$ the formulas $p\oplus \cdots \oplus p$ and $p\odot \cdots  \odot p$ (where $p$ is repeated $k$ times in both cases) for any $k\geq 0$.  We use bold letters to denote tuples (arity is given by the context). Hence, we denote by $\bfphi,\bfpsi,\ldots $  tuples of formulas  and by $\phi_i$ the $i$th component of $\bfphi$. 

To interpret formulas on structures, we use a many-valued generalization of the \textsc{Kripke} models. We fix a positive integer $n$ for the remainder of the paper and we denote by $\lucas_n$ the subalgebra $\lucas_n=\{0, \frac{1}{n}, \ldots, \frac{n-1}{n}, 1\}$ of the standard MV-algebra $\struc{[0,1],\neg,\iimplies, 1}$ which is defined by $x\iimplies y=\min(1, 1-x+y)$ and $\neg x= 1-x$. Hence, the interpretation of the connectors $\oplus$,  $\odot$, $\vee$ and $\wedge$ on $[0,1]$ are given by $x\oplus y=\min(x+y,1)$, $x\odot y= \max(x+y-1,0)$, $x\vee y=\max(x,y)$ and $x\wedge y=\min(x,y)$. 

Recall that the variety $\MV$ generated by the standard MV-algebra $[0,1]$ is the variety of \emph{MV-algebras} that was introduced by \textsc{Chang} \cite{Chang1958} in order to obtain an algebraic proof \cite{Chang1959} of the completeness of \textsc{\L ukasiewicz} infinite-valued logic with respect to $[0,1]$-valuations. We denote by $\MV_n$ the subvariety of $\MV$ generated by $\lucas_n$. For a general reference about the theory of MV-algebras, we refer to \cite{Cignoli2000}.

We  use vocabulary and notation that are customary for relational structures in the field of modal logic. Hence, an {$\lang$-frame} is a tuple $\framme{F}=\struc{W, (R_i)_{i\in I}}$ where $W$ is a nonempty set and $R_i$ is a $k_i+1$-ary relation for every $i \in I$. We say that $W$ is the \emph{universe} of $\framme{F}$ and that elements of $W$ are \emph{worlds} of $\framme{F}$. We denote by $\cate{FR}_{\lang}$ the class of $\mathcal{L}$-frames. For the sake of readability, we use $\cate{FR}$ for $\cate{FR}_{\lang_\square}$. By abuse of notation, we let $\lang$ stand for the language defined at the beginning of the section and for the signature of the $\lang$-frames. If $\ell\geq 2$ and $R\subseteq W^\ell$, we write $\bfw\in Ru$ for $(u, w_1, \ldots, w_{\ell-1})\in R$.

\begin{defn}\label{defn:mod}
An \emph{$\mbox{\L}_n$-valued $\lang$-model}, or a \emph{model} for short,  is a couple $\model{M}=\struc{\framme{F}, \Val}$ where $\framme{F}=\struc{W, (R_i)_{i\in I}}$ is an $\lang$-frame and  $\Val\colon W\times \Prop \to \lucas_n$. We say that  $\model{M}=\struc{\framme{F}, \Val}$ is \emph{based on $\framme{F}$}. 
\end{defn}

In a model $\mathcal{M}$, the valuation map $\Val$ is extended inductively  to $W \times \Form_{\lang}$ using \textsc{\L ukasiewicz} interpretation of the connectors $0$,  $\neg$ and $\rightarrow$ in $[0,1]$ and the rule
\[\Val(u,\nabla(\bfphi))=\min\{\max_{1 \leq \ell \leq k} \Val(w_\ell, \phi_\ell)\mid \bfw\in Ru\}\]
 for every $k$-ary modal connector $\nabla$.

Informally speaking, models have many-valued worlds and crisp relations. The class of $\lucas_n$-valued models has  been considered in  \cite{Bou2011,Hansoul2006,Teheux2012}  to obtain completeness results for many-valued normal modal logics.

\begin{defn}
A formula $\phi$ is \emph{true} in an $\lucas_n$-valued $\lang$-model $\model{M}=\struc{\framme{F}, \Val}$, in notation $\model{M} \models \phi$, if $\Val(u,\phi)=1$ for every world $u$ of  $\framme{F}$.

%If $\model{M}=\struc{\framme{F}, \Val}$ is an $\lucas_n$-valued $\lang$-model and if $\phi \in\mathrm{Form}_{\lang}$, we write $\model{M}\models \phi$ if $\Val(u,\phi)=1$ for every world $u$ of  $\framme{F}$. In that case, we say that $\phi$ is \emph{true} in $\model{M}$.

If $\Phi$ is a set of formulas that are true in any $\lucas_n$-valued $\lang$-model based on an $\lang$-frame $\framme{F}$, we write $\framme{F} \models_n \Phi$ and say that $\Phi$ is \emph{$\lucas_n$-valid} in $\framme{F}$. We write $\framme{F}\models_n \phi$ instead of $\framme{F}\models_n \{\phi\}$.
\end{defn}

We base our first notion of definability on the validity relation $\models_n$.
\begin{defn}
A class $\class{C}$ of $\lang$-frames is \emph{$\lucas_n$-definable} if there is a set  $\Phi$  of formulas   such that 
$\class{C}=\{\framme{F} \in \cate{FR}_{\lang}\mid \framme{F} \models_n \Phi\}$. In that case, we write $\class{C}=\mathrm{Mod}_n(\Phi)$.
\end{defn}

\begin{ex}\label{ex:jgy01}
As expected, the many-valued nature of the valuation added to the frames may be responsible for strong differences between the standard (Boolean) validity relation and the $\lucas_n$-validity relation. For instance $\Mod_1(\square (p \vee \neg p))=\class{FR}_{\lang_{\square}}$ while $\Mod_n(\square (p \vee \neg p))=\{\framme{F}\in\cate{FR}\mid R=\varnothing\}$ if $n>1$. 
\end{ex}

We denote by $\PForm_\lang^n$ the fragment of $\Form_\lang$ defined by the grammar
\[
\phi::=p^n \ \vert \ 0 \ \vert \ \neg \phi \ \vert  \phi \rightarrow \phi \ \vert \ \nabla_i(\phi, \ldots, \phi)
\]
where $p \in \Prop$ and $i\in I$. Let us also denote by $\mathrm{tr}_n$ the map  
 \[
\mathrm{tr}_n\colon \Form_{\lang} \to \PForm_{\lang}^n  \colon \phi(p_1, \ldots, p_k) \mapsto \phi(p_1^n, \ldots, p_k^n).
\]
The following result relates  the expressive power of $\Form_\lang$ with regards to $\models_1$ to the expressive power of $\PForm^m_\lang$ with regards to $\models_m$, for any $m> 0$.

\begin{prop}\label{prop:boh0}
Let $\class{C}$ be a class of $\lang$-frames and $\Phi\subseteq \Form_{\mathcal{L}}$. The following conditions are equivalent.
\begin{enumerate}[(i)]
\item\label{it:poi01} $\class{C}=\Mod_1(\Phi)$.
\item\label{it:poi02} There is an $m>0$ such that $\class{C}=\Mod_m(\mathrm{tr}_m(\Phi))$.
\item\label{it:poi03} For any $m>0$, we have $\class{C}=\Mod_m(\mathrm{tr}_m(\Phi))$.
\end{enumerate}
Moreover $\Mod_m(\Phi)  \subseteq  \Mod_1(\Phi)$ for every $m>0$.
\end{prop}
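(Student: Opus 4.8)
The plan is to reduce everything to the single identity $\Mod_1(\Phi) = \Mod_m(\tr_m(\Phi))$, valid for every $m > 0$, from which the three equivalences follow formally; the ``moreover'' clause is handled separately. The observation driving the whole argument concerns the power operation in $\lucas_m$: for $x = j/m$ one computes $x^m = \max(mx - (m-1), 0) = \max(j - m + 1, 0)$, so that $x^m = 1$ if $x = 1$ and $x^m = 0$ otherwise. Thus $x^m \in \{0,1\}$ for every $x \in \lucas_m$, and $x^m = x$ on $\{0,1\}$; raising to the $m$-th power retracts $\lucas_m$ onto its Boolean skeleton $\{0,1\} = \lucas_1$. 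For the ``moreover'' clause I note that $\lucas_1 = \{0,1\}$ is a subalgebra of $\lucas_m$, closed under $\neg$, $\iimplies$ and the $\min$/$\max$ of the modal clause; hence every $\lucas_1$-valued model based on $\framme{F}$ is also an $\lucas_m$-valued model on which formulas receive the same truth values, so $\lucas_m$-validity in $\framme{F}$ implies $\lucas_1$-validity, that is, $\Mod_m(\Phi) \subseteq \Mod_1(\Phi)$.

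For the identity I fix $\phi(p_1, \ldots, p_k)$ and prove $\framme{F} \models_1 \phi \iff \framme{F} \models_m \tr_m(\phi)$. To any $\lucas_m$-valued valuation $\Val$ on $\framme{F}$ I associate the valuation $\Val'$ with $\Val'(w, p) = \Val(w, p)^m$, which by the observation above is $\{0,1\}$-valued, so $\struc{\framme{F}, \Val'}$ is an $\lucas_1$-valued model. Since $\tr_m(\phi) = \phi(p_1^m, \ldots, p_k^m)$, a routine structural induction (a substitution lemma expressing compositionality of the valuation, with the modal step passing through the $\min$--$\max$ clause) yields
\[
\Val(u, \tr_m(\phi)) = \Val'(u, \phi) \quad \text{for every world } u .
\]
If $\framme{F} \models_1 \phi$, then for arbitrary $\lucas_m$-valued $\Val$ the derived $\Val'$ is an $\lucas_1$-model, so $\Val'(u, \phi) = 1$ and hence $\Val(u, \tr_m(\phi)) = 1$ for all $u$; thus $\framme{F} \models_m \tr_m(\phi)$. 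Conversely, if $\framme{F} \models_m \tr_m(\phi)$, then for any $\{0,1\}$-valued $V$ we have $V' = V$ (as $x^m = x$ on $\{0,1\}$), whence $V(u, \phi) = V(u, \tr_m(\phi)) = 1$ for all $u$, so $\framme{F} \models_1 \phi$. Applying this to each member of $\Phi$ gives $\Mod_1(\Phi) = \Mod_m(\tr_m(\Phi))$ for every $m > 0$.

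The equivalences are then formal. The identity shows $\class{C} = \Mod_1(\Phi)$ iff $\class{C} = \Mod_m(\tr_m(\Phi))$ for each fixed $m$, which is exactly (i) $\Leftrightarrow$ (iii); and (iii) $\Rightarrow$ (ii) is trivial while (ii) $\Rightarrow$ (i) follows by applying the identity to the witnessing $m$. The one step needing genuine care is the displayed compositionality identity, in particular its modal induction case; the remainder is bookkeeping around the retraction $x \mapsto x^m$.
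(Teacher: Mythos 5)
Your proof is correct and follows essentially the same route as the paper's: your valuation $\Val'$ is exactly the paper's $\Val_\sharp$, your displayed compositionality identity $\Val(u,\tr_m(\phi))=\Val'(u,\phi)$ is the paper's key step, and your observation that $x\mapsto x^m$ fixes $\{0,1\}$ is precisely what underlies the paper's surjectivity claim $\lucas_1^{W\times\Prop}=\{\Val_\sharp \mid \Val\in\lucas_m^{W\times\Prop}\}$. The ``moreover'' clause is likewise handled as in the paper, via the inclusion $\lucas_1\subseteq\lucas_m$.
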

\begin{proof}
Obviously, (\ref{it:poi03}) implies (\ref{it:poi02}). Now, let $\framme{F}=\struc{W, (R_i)_{i \in I}}$ be an $\lang$-frame and $m>0$. We prove that for every $\phi \in \Form_\lang$ we have
\begin{equation}\label{eqn:hjl01}
\framme{F} \models_m \tr_m(\phi)\quad \iff \quad \framme{F} \models_1 \phi.
\end{equation}

For any $\Val\colon W\times \Prop \to \lucas_m$ let $\Val_\sharp \colon W\times \Prop \to \lucas_1$ be the map defined by $\Val_{\sharp}(u,p)=\Val(u, p^m)$ for every $u\in W$ and $p \in \Prop$. It is clear that
\begin{equation}\label{eqn:hjl02}
\lucas_1^{W \times \Prop}=\{\Val_\sharp \mid \Val \in \lucas_m^{W\times \Prop}\}.
\end{equation}
Also, by definition of the map $\tr_m$, for every $u\in W$ and $\phi \in \Form_\lang$ we have
\[
\Val(u,\tr_m(\phi))=\Val_\sharp(u,\phi).
\]
Then, for every  $\phi \in \Form_\lang$, $u\in W$ and $\Val\colon W \times \Prop \to \lucas_m$ we have
\[
\struc{\framme{F}, \Val} \models \tr_m(\phi) \iff \struc{\framme{F}, \Val_\sharp}\models \phi.
\]
We conclude that  \eqref{eqn:hjl01} holds true by the latter equivalence and identity \eqref{eqn:hjl02}.

%, it follows successively
%\begin{eqnarray}
%\framme{F}\models_m \tr_m(\phi) & \iff & \forall \Val \in \lucas_m^{W\times\Prop}\ \Val(u,\tr_m(\phi))=1\\
%& \iff & \forall \Val \in \lucas_m^{W\times\Prop}\ \Val_\sharp(u,\phi)=1\label{eqn:fgh02}\\
%& \iff & \forall \Val' \in \lucas_1^{W\times\Prop}\ \Val'(u,\tr_m(\phi))=1\label{eqn:fgh03}\\
%& \iff & \framme{F}\models_1 \phi,
%\end{eqnarray}
%where we have applied (\ref{eqn:hjl01}) to obtain (\ref{eqn:fgh02}) and (\ref{eqn:hjl02}) to obtain (\ref{eqn:fgh03}).

We obtain directly from (\ref{eqn:hjl01})  that (\ref{it:poi02}) implies  (\ref{it:poi01}) and that (\ref{it:poi01}) implies (\ref{it:poi03}). The second part of the statement follows from the inclusion $\lucas_1 \subseteq  \lucas_m$ for every $m>0$. 
\end{proof}

In particular, any $\lucas_1$-definable class of frames is also $\lucas_m$-definable. At this point of our developments, nothing can be said about the converse implication (Theorem \ref{thm:main02} gives a partial answer).

Apart from the signature of $\lang$-frames, there is another first-order signature that can be used to interpret $\lang$-formulas. It is the signature  of the \emph{$\lang_n$-frames} that embodies the many-valued nature of the modal language we consider. These structures were introduced in \cite{Teheux2012,Teheux2009} and their  non-modal reducts were already defined in \cite{Niederkorn2001,Gehrke2004}. We denote by $\preceq$ the dual order of divisibility on $\N$, that is, for every $\ell, k \in \N$ we write $\ell \preceq k$ if $\ell$ is a divisor of $k$, and $\ell \prec k$ if $\ell$ is a proper divisor of $k$.

\begin{defn}[\cite{Teheux2012}]\label{def:enriched}
An \emph{$\lang_n$-frame} is a tuple $\struc{W, (r_m)_{m  \preceq n}, (R_i)_{i \in I}}$ where
\begin{enumerate}
\item $\struc{W, (R_i)_{i \in I}}$ is an $\lang$-frame, 
\item $r_m \subseteq W$ for every $m  \preceq  {n}$,
\item $r_n=W$ and $r_m \cap r_q=r_{\gcd(m,q)}$ for any $m, q  \preceq {n}$,
\item $R_i u \subseteq r_m^{k_i}$ for any $i \in I$,  $m   \preceq {n}$ and $u\in r_m$.
\end{enumerate}

We denote by $\class{FR}_\lang^n$ the class of the  $\lang_n$-frames and we write $\cate{FR}^n$ for $\cate{FR}_{\lang_\square}^n$. Elements of $\cate{FR}^n$ are called \emph{$n$-frames}. We use $\lang_n$ to denote the signature of the  $\lang_n$-frames. For $\framme{F}\in\class{FR}_\lang^n$, we let $\framme{F}_\sharp$ be the \emph{underlying $\lang$-frame of $\framme{F}$}, that is, the reduct of $\framme{F}$ to  the language of $\lang$-frames.
The \emph{trivial $\lang_n$-frame $\framme{F}^n$ associated with an $\lang$-frame $\framme{F}$} is  
obtained by enriching $\framme{F}$ with $(r_m)_{ m\preceq{n}}$ where $r_m=\varnothing$ if $m\neq n$ and $r_n=W$.
\end{defn}

The general idea is to use the structure given by the sets $r_m$ (where $m\preceq n$) to define a validity relation which is weaker than $\models_n$.  Informally, when  adding a valuation to an $\lang_n$-frame, we require that the truth value of any formula in any world $u\in r_m$ belongs $\lucas_m$. This idea is formalized in the following definition.

\begin{defn}\label{def:mobaseon}
An $\lucas_n$-valued model $\model{M}$ is \emph{based on} the  $\lang_n$-frame $\framme{F}=\struc{W, (r_m)_{ m  \preceq {n}}, (R_i)_{i \in I}}$ if $\model{M}$ is based on $\framme{F}_\sharp$ and $\Val(u, \Prop)\subseteq \lucas_m$ for every $m\preceq{n}$ and  $u \in r_m$. 

If $\Phi$ is a set of $\lang$-formulas that are true in every $\lucas_n$-valued model based on an $\lang_n$-frame $\framme{F}$, we write $\framme{F} \models \Phi$ and say that $\Phi$ is \emph{valid} in $\framme{F}$. We write $\framme{F}\models \phi$ instead of $\framme{F}\models \{\phi\}$.
\end{defn}

\begin{rem}\label{rem:nota}
It may seem counterintuitive to use the symbol $\models_n$ for the validity relation associated with $\lang$-frames and $\models$ for the one associated with $\lang_n$-frames (one could expect the reverse convention). Nevertheless, this is the sound way to use notation. Indeed, the signature of $\lang$-frames does not carry any information on the many-valued nature of the valuations that will be added on them to form models (hence the necessity to recall the dependence on $n$ in the validity relation related to $\lang$-frames), while this information is incorporated in the signature of $\lang_n$-frames. Keeping that remark in mind may help to remember the notation used in the paper.
\end{rem}

The structure given by the predicates $r_m$ ($m \preceq n)$ arises naturally in the algebraic treatment of (modal extensions of) \textsc{\L ukasiewicz} finitely-valued logics: it is used in the development of natural dualities for the variety $\var{MV}_n:=\classop{HSP}(\lucas_n)$  \cite{Niederkorn2001} and for the construction of canonical extensions of (modal expansions of) members of $\var{MV}_n$ \cite{Teheux2012, Gehrke2004}. Also, since the predicates $r_m$ ($m \preceq n)$ permit stating properties involving the set of truth degrees in each world, the gain of expressive power they provide could be interesting in applications of the modal extensions of \textsc{\L ukasiewicz} finitely-valued logic. For instance, in many-valued propositional dynamic logics \cite{Teheux2014}, it is interesting to be able to consider structures satisfying the formula `after an indefinite number of executions of the program $\alpha$ the formula $\phi$ is either true or false'. 

The proof of the following lemma is straightforward and omitted.

\begin{lem}\label{lem:trivial}
Let $\framme{F}$ be an $\lang$-frame and $\framme{F}^n$ be its associated trivial  $\lang_n$-frame.
\begin{enumerate}
\item \label{it:triv01} For every $\phi \in \Form_\lang$ we have $\framme{F} \models_n \phi$ if and only if $\framme{F}^n \models \phi$.
\item \label{it:triv02} For every $\phi \in \Form_\lang$, if $\framme{F} \models_n \phi$ then $\framme{G}\models \phi$ for every  $\lang_n$-frame based on $\framme{F}$.
\item \label{it:triv03} $(\framme{F}^n)_\sharp=\framme{F}$.
\end{enumerate}
\end{lem}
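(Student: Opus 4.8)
The plan is to prove each of the three items directly from the relevant definitions, since the author has announced that the proof is straightforward. The key observation driving everything is the structure of the trivial $\lang_n$-frame $\framme{F}^n$: it has $r_m=\varnothing$ for every $m\prec n$ and $r_n=W$, so the constraint ``$\Val(u,\Prop)\subseteq\lucas_m$ for every $m\preceq n$ and $u\in r_m$'' appearing in Definition~\ref{def:mobaseon} reduces to the single nonvacuous condition coming from $r_n=W$, namely $\Val(u,\Prop)\subseteq\lucas_n$ for every $u\in W$. But that is exactly the requirement already built into the notion of an $\lucas_n$-valued $\lang$-model (Definition~\ref{defn:mod}), where $\Val\colon W\times\Prop\to\lucas_n$. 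The upshot is that a model is based on $\framme{F}^n$ (in the sense of Definition~\ref{def:mobaseon}) if and only if it is based on $(\framme{F}^n)_\sharp=\framme{F}$ (in the sense of Definition~\ref{defn:mod}), with no further restriction.

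For item~\eqref{it:triv03}, I would simply unwind the definition of $\framme{F}^n$: enriching $\framme{F}=\struc{W,(R_i)_{i\in I}}$ with the predicates $(r_m)_{m\preceq n}$ and then taking the reduct $(\,\cdot\,)_\sharp$ back to the $\lang$-frame signature erases precisely the predicates just added, returning $\framme{F}$. This is immediate from the constructions.

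For item~\eqref{it:triv01}, I would argue that the two validity relations quantify over exactly the same collection of models. By definition, $\framme{F}\models_n\phi$ means $\phi$ is true in every $\lucas_n$-valued $\lang$-model based on $\framme{F}$, while $\framme{F}^n\models\phi$ means $\phi$ is true in every $\lucas_n$-valued model based on $\framme{F}^n$. By the observation above, these two sets of models coincide, so the two conditions are equivalent; this also uses item~\eqref{it:triv03} to identify the underlying $\lang$-frames. For item~\eqref{it:triv02}, let $\framme{G}$ be any $\lang_n$-frame with $\framme{G}_\sharp=\framme{F}$. Every $\lucas_n$-valued model based on $\framme{G}$ satisfies the valuation constraints of $\framme{G}$, and in particular (forgetting those constraints) is an $\lucas_n$-valued $\lang$-model based on $\framme{F}$. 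Hence the class of models based on $\framme{G}$ is a \emph{subclass} of the class of models based on $\framme{F}$, so validity of $\phi$ over the larger class forces validity over the smaller one: $\framme{F}\models_n\phi$ implies $\framme{G}\models\phi$.

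I do not expect any genuine obstacle here, which matches the author's remark that the proof is omitted. The only point requiring a moment's care is the bookkeeping in item~\eqref{it:triv01}, where one must check that the apparently richer ``based on $\framme{F}^n$'' condition of Definition~\ref{def:mobaseon} collapses to the plain ``based on $\framme{F}$'' condition of Definition~\ref{defn:mod} precisely because all proper $r_m$ are empty and $r_n=W$ contributes only the already-assumed bound $\Val(u,\Prop)\subseteq\lucas_n$; once this collapse is noted, all three items follow formally.
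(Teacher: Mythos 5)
Your proof is correct and is exactly the straightforward argument the paper has in mind when it omits the proof as routine: the collapse of the ``based on $\framme{F}^n$'' condition to the plain $\lucas_n$-valued model condition (since all $r_m$ with $m\prec n$ are empty and $r_n=W$ imposes nothing new) is the whole content of items \eqref{it:triv01} and \eqref{it:triv03}, and the subclass argument for item \eqref{it:triv02} is the intended one. Nothing is missing.
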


We use the validity relation $\models$ to introduce the notion of definability for  $\lang_n$-frames.

\begin{defn}
A class $\class{C}$ of  $\lang_n$-frames is \emph{definable} if there is some $\Phi\subseteq \Form_{\lang}$ such that 
$
\class{C}=\{\framme{F} \in \cate{FR}_{\lang}^n\mid \framme{F} \models \Phi\}.
$
In that case, we write $\class{C}=\Mod(\Phi)$.
\end{defn}
\begin{ex}\label{ex:jgy02}
It is not difficult to prove that $\Mod(\square (p \vee \neg p))=\{\framme{F} \in \cate{FR}^n \mid\forall u\,   Ru \subseteq r_1 \}$. Moreover, as we shall prove in Example \ref{ex:sfg}, the class $\{\framme{F}\in \cate{FR}^n \mid \forall u\, u \not\in r_m\}$ is not definable if $m$ is a strict divisor of $n$.
\end{ex}

Any  $\lang_n$-frame $\framme{F}$ has an underlying $\lang$-frame $\framme{F}_\sharp$. Conversely, for any $\lang$-frame $\framme{F}$, the trivial  $\lang_n$-frame $\framme{F}^n$ associated with $\framme{F}$ is based on $\framme{F}$. The following result clarifies the connections between these constructions with regards to definability.
\begin{prop}\label{prop:boh}
Let $\class{C}$ be a class of $\lang$-frames and $\Phi\subseteq \Form_\lang$. Denote by $\class{C}'$ the class $\{\framme{F}\in \cate{FR}_\lang^n \mid \framme{F}_\sharp \in \class{C}\}$. 
\begin{enumerate}
\item\label{it:biz01} If $\class{C}'=\Mod(\Phi)$ then $\class{C}=\Mod_n(\Phi)$.
\item\label{it:biz02} If $\class{C}=\Mod_n(\Phi)$ then $\class{C}'\subseteq \Mod(\Phi)$, but the converse inclusion may not hold.
\end{enumerate}
\end{prop}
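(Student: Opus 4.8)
The plan is to derive everything from Lemma~\ref{lem:trivial}, which already records the exact interaction between the two validity relations $\models_n$ and $\models$ and the two passages $\framme{F}\mapsto\framme{F}^n$ and $\framme{F}\mapsto\framme{F}_\sharp$; the only genuinely substantive part will be the counterexample for the reverse inclusion in item~(\ref{it:biz02}).

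For part~(\ref{it:biz01}) I would fix an arbitrary $\lang$-frame $\framme{F}$ and chase a chain of equivalences. By Lemma~\ref{lem:trivial}(\ref{it:triv01}), $\framme{F}\models_n\Phi$ holds iff $\framme{F}^n\models\Phi$, that is, iff $\framme{F}^n\in\Mod(\Phi)$. Invoking the hypothesis $\Mod(\Phi)=\class{C}'$, this is equivalent to $\framme{F}^n\in\class{C}'$, which by the definition of $\class{C}'$ means $(\framme{F}^n)_\sharp\in\class{C}$. Finally Lemma~\ref{lem:trivial}(\ref{it:triv03}) gives $(\framme{F}^n)_\sharp=\framme{F}$, so the last condition reads $\framme{F}\in\class{C}$. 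Reading the chain from end to end yields $\framme{F}\models_n\Phi\iff\framme{F}\in\class{C}$, i.e. $\class{C}=\Mod_n(\Phi)$.

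For the inclusion asserted in part~(\ref{it:biz02}) I would take any $\framme{F}\in\class{C}'$. By the definition of $\class{C}'$ we have $\framme{F}_\sharp\in\class{C}=\Mod_n(\Phi)$, hence $\framme{F}_\sharp\models_n\Phi$. Since $\framme{F}$ is, by construction of $\framme{F}_\sharp$, an $\lang_n$-frame based on $\framme{F}_\sharp$, Lemma~\ref{lem:trivial}(\ref{it:triv02}) gives $\framme{F}\models\Phi$, i.e. $\framme{F}\in\Mod(\Phi)$; therefore $\class{C}'\subseteq\Mod(\Phi)$. The failure of the reverse inclusion is where the content lies, and here I would simply produce one witness, reusing the formula $\square(p\vee\neg p)$ of Examples~\ref{ex:jgy01} and~\ref{ex:jgy02}. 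Take $\Phi=\{\square(p\vee\neg p)\}$ with $n>1$, so that $\class{C}=\Mod_n(\Phi)=\{\framme{F}\in\cate{FR}\mid R=\varnothing\}$ and consequently $\class{C}'=\{\framme{F}\in\cate{FR}^n\mid R=\varnothing\}$. Consider the $n$-frame $\framme{G}$ on a single reflexive world, $W=\{u\}$ and $R=\{(u,u)\}$, with $r_m=W$ for every $m\preceq n$; the compatibility conditions of Definition~\ref{def:enriched} are immediate since all the $r_m$ coincide with $W$. Because $u\in r_1$, every valuation on $\framme{G}$ takes values in $\lucas_1=\{0,1\}$ at $u$, so $\Val(u,p\vee\neg p)=1$ and hence $\framme{G}\models\square(p\vee\neg p)$, i.e. $\framme{G}\in\Mod(\Phi)$. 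Yet $\framme{G}_\sharp$ has a nonempty relation, so $\framme{G}_\sharp\notin\class{C}$ and thus $\framme{G}\notin\class{C}'$, showing $\Mod(\Phi)\not\subseteq\class{C}'$.

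I expect no real obstacle in the two inclusions, which are pure bookkeeping on top of Lemma~\ref{lem:trivial}. The point requiring care is the counterexample, and the mechanism to exploit is precisely the asymmetry flagged in Remark~\ref{rem:nota}: the relation $\models$ on an $\lang_n$-frame quantifies only over valuations compatible with the predicates $r_m$, whereas $\models_n$ on the underlying $\lang$-frame quantifies over all $\lucas_n$-valuations, so a frame all of whose worlds lie in $r_1$ can validate a Boolean tautology under $\models$ that fails under $\models_n$. This is exactly why the restriction $n>1$ is needed for the witness to work.
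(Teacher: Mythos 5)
Your proposal is correct and takes essentially the same route as the paper: part~(\ref{it:biz01}) is the same chain through Lemma~\ref{lem:trivial}~(\ref{it:triv01}) and (\ref{it:triv03}), the inclusion in part~(\ref{it:biz02}) is exactly Lemma~\ref{lem:trivial}~(\ref{it:triv02}), and the counterexample uses the same formula $\square(p\vee\neg p)$ with $n>1$. The only (harmless) presentational difference is that you verify an explicit one-world reflexive witness with $r_m=W$ for all $m\preceq n$, whereas the paper instead invokes the characterization $\Mod(\square(p\vee\neg p))=\{\framme{F}\in\cate{FR}^n\mid\forall u\ Ru\subseteq r_1\}$ from Example~\ref{ex:jgy02}.
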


\begin{proof}
(\ref{it:biz01}) If $\framme{F}\in \class{C}$ then any $\lucas_n$-valued $\lang$-model based on $\framme{F}$ can be viewed as an $\lucas_n$-valued $\lang$-model based on $\framme{F}^n$. By Lemma  \ref{lem:trivial} (\ref{it:triv01}), such an $\framme{F}^n$ belongs to $\class{C}'=\Mod(\Phi)$. If follows that $\framme{F}\in \Mod_n(\Phi)$ by definition of $\class{C}'$.

Conversely, if $\framme{F}\in \Mod_n(\Phi)$ then $\framme{F}^n \in \Mod(\Phi)=\class{C}'$ by Lemma \ref{lem:trivial} (\ref{it:triv01}). We conclude that $\framme{F}\in \class{C}$   by Lemma \ref{lem:trivial} (\ref{it:triv03}).

(\ref{it:biz02}) The stated inclusion follows directly from Lemma \ref{lem:trivial} (\ref{it:triv02}). To obtain a counterexample for the converse inclusion, assume that $n>1$ and consider the formula $\phi=\square(p \vee \neg p)$. We have stated in Example \ref{ex:jgy01} and Example \ref{ex:jgy02} that $\class{C}:=\Mod_n(\phi)$ is equal to $\{\framme{F}\in \cate{FR}\mid R=\varnothing\}$ and $\Mod(\phi)=\{\framme{F}\in \cate{FR}^n\mid \forall u \ Ru\subseteq r_1\}$. Hence, $\Mod(\phi)\not\subseteq \class{C}'$.
\end{proof}

%% Testing definability with frame constructions %%
\section{Testing definability with frame constructions}
There are several frame constructions that are known to preserve the standard Boolean validity relation. These constructions can be used to test if a class $\class{C}$ of frames is modally definable:  if $\class{C}$ is not closed under these constructions, it is not modally definable.

Three of them (namely, bounded morphisms, generated subframes and disjoint unions) admit straightforward many-valued versions. To deal with the fourth one, that is, canonical extension, we need to generalize some algebraic apparatus.

\subsection{$\lang_n$-bounded morphisms and generated subframes}\label{sect:constructionsclass}

If $R$ is a $(k+1)$-ary relation on a set $W$, if $u\in W$ and if $f\colon W\to W'$ is a map, we denote by $f(Ru)$ the set $\{(f(v_1), \ldots, f(v_k) )\mid \bfv \in Ru\}$. Recall that a map $f\colon\framme{F}\to \framme{F}'$ between two $\lang$-frames $\framme{F}=\struc{W, (R_i)_{i\in I}}$ and $\framme{F}'=\struc{W', (R'_i)_{i\in I}}$ is called a \emph{bounded morphism} if $f(R_iu)=R'_i f(u)$ for every world $u$ of $\framme{F}$ and $i\in I$. If in addition $f$ is onto, we write $f\colon \framme{F}\twoheadrightarrow\framme{F}' $ and say that $\framme{F}'$ is a \emph{bounded morphic image of $\framme{F}$}. 

A substructure $\framme{F}'$ of an $\lang$-frame $\framme{F}$ is a \emph{generated subframe} of $\framme{F}$, in notation $\framme{F}'\rightarrowtail \framme{F}$, if the inclusion map $i:\framme{F}'\to\framme{F}$ is a bounded morphism. 

\begin{defn}
A map $f:\framme{F}\to\framme{F}'$ between two  $\lang_n$-frames $\framme{F}$ %=\struc{W, (r_m)_{ m \preceq{n}}, (R_i)_{i \in I}}$ 
and $\framme{F}'$
%=\struc{W', (r'_m)_{m \preceq{n}}, (R'_i)_{i \in I}}$
 is an \emph{$\lang_n$-bounded morphism} if $f$ is a bounded morphism between $\framme{F}_\sharp$ and $\framme{F}'_\sharp$ and if $f(r_m)\subseteq r_m'$ for every $m \preceq{n}$. If in addition $f$ is onto, we write $f\colon \framme{F}\twoheadrightarrow\framme{F}' $ and say that $\framme{F}'$ is an \emph{$\lang_n$-bounded morphic image of $\framme{F}$}.

A substructure $\framme{F}'$ of an  $\lang_n$-frame is an \emph{$\lang_n$-generated subframe} of $\framme{F}$, in notation $\framme{F}'\rightarrowtail \framme{F}$, if the inclusion map $i:\framme{F}'\to \framme{F}$ is a bounded morphism.
\end{defn}

If $\{\framme{F}_j \mid j \in J\}$ is a family of relational structures over the same signature (the signature of $\lang$-frames or the signature of  $\lang_n$-frames), we denote by $\disjun\{ \framme{F}_j\mid j \in J\}$ the disjoint union of these structures.

The next result shows how to use the constructions just introduced as \emph{criteria} for ($\lucas_n$-)definability. Proofs are routine arguments and are omitted.
\begin{prop}\label{prop:exo}
Let $\{\framme{F}, \framme{F}'\}\cup\{\framme{F}_j\mid j \in J\}$  be a family of $\lang$-frames,  $\{\framme{G}, \framme{G}'\}\cup\{\framme{G}_j\mid j \in J\}$  be a family of  $\lang_n$-frames, and $\phi\in \Form_\lang$.
\begin{enumerate}
\item\label{it:gss01} If $\framme{F}\models_n \phi$ and $\framme{F}'\rightarrowtail \framme{F}$ or $\framme{F} \twoheadrightarrow \framme{F}'$ then $\framme{F}'\models_n\phi$.
\item If $\framme{F}_j \models_n \phi$ for every $j\in J$ then $\disjun\{\framme{F}_j \mid j \in J\}\models_n \phi$.
\item If $\framme{G}\models \phi$ and $\framme{G}'\rightarrowtail \framme{G}$ or $\framme{G} \twoheadrightarrow \framme{G}'$ then $\framme{G}'\models\phi$.
\item If $\framme{G}_j \models \phi$ for every $j\in J$ then $\disjun\{\framme{G}_j \mid j \in J\}\models \phi$.
\end{enumerate}
\end{prop}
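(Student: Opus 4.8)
The plan is to deduce all four statements from a single truth-value transfer principle, proved by induction on the structure of the formula, in which only the modal clause makes essential use of the frame-construction hypotheses. In every case I argue by contraposition: assuming $\phi$ is refuted by some (admissible) valuation on the image or smaller structure, I transport that valuation to the source or larger structure and show that the truth value of each subformula, hence of $\phi$, is preserved at the relevant worlds.

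First I would treat bounded morphic images. Given an onto bounded morphism $f\colon\framme{F}\twoheadrightarrow\framme{F}'$ and a valuation $\Val'$ on $\framme{F}'$ with $\Val'(u',\phi)<1$, I define $\Val(u,p)=\Val'(f(u),p)$ on $\framme{F}$ and prove $\Val(u,\psi)=\Val'(f(u),\psi)$ for every world $u$ and every $\psi\in\Form_\lang$. The base cases (a propositional variable or the constant $0$) and the \textsc{\L ukasiewicz} connective cases are immediate, since $\Val$ and $\Val'$ agree through $f$ and the interpretation of $\neg$ and $\iimplies$ is pointwise. The modal case is the heart of the argument: the induction hypothesis gives $\Val(w_\ell,\phi_\ell)=\Val'(f(w_\ell),\phi_\ell)$, while the bounded morphism identity $f(R_iu)=R_i'f(u)$ ensures that, as $\bfw$ ranges over $R_iu$, the tuple $(f(w_1),\dots,f(w_{k_i}))$ ranges exactly over $R_i'f(u)$; hence $\{\max_{\ell}\Val(w_\ell,\phi_\ell)\mid\bfw\in R_iu\}$ and $\{\max_{\ell}\Val'(v_\ell,\phi_\ell)\mid\bfv\in R_i'f(u)\}$ are the same set of truth values and so are their minima (with both sides equal to $1$ when the successor set is empty). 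Choosing $u$ with $f(u)=u'$, which is possible since $f$ is onto, yields $\Val(u,\phi)<1$ and hence $\framme{F}\not\models_n\phi$, the contrapositive of the assertion.

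For generated subframes $\framme{F}'\rightarrowtail\framme{F}$ the same induction is applied to an extension of a refuting valuation: I extend $\Val'$ to $\framme{F}$ by arbitrary values off $\framme{F}'$, and since the inclusion is a bounded morphism every successor tuple of a world of $\framme{F}'$ stays inside $\framme{F}'$, so the min-max expression at such a world is computed from unchanged values only. The disjoint-union statements then reduce to this case: relations never cross components, so each $\framme{F}_j$ (respectively $\framme{G}_j$) is a generated subframe of $\disjun\{\framme{F}_j\mid j\in J\}$, and a refutation of $\phi$ somewhere in the disjoint union occurs at a world of a single component, where the truth value of $\phi$ coincides with the one computed in that component, contradicting the hypothesis.

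The $\lang_n$-versions require only the additional check that every transported valuation is \emph{admissible}, i.e. satisfies $\Val(u,\Prop)\subseteq\lucas_m$ whenever $u\in r_m$. For a pulled-back valuation this is guaranteed by $f(r_m)\subseteq r_m'$, because $u\in r_m$ forces $f(u)\in r_m'$ and thus $\Val(u,\Prop)=\Val'(f(u),\Prop)\subseteq\lucas_m$; for generated subframes and disjoint unions the predicates $r_m$ are inherited by the substructures, so admissibility transfers, and the arbitrary values placed on new worlds can be chosen inside the relevant $\lucas_m$. I expect the only genuine obstacle to be the modal induction step, where the whole point is the bookkeeping showing that the bounded morphism condition transports the index set of the min-max expression faithfully; once that is in place the remaining verifications are routine, which is exactly why they are omitted.
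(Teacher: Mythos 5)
Your proof is correct, and it is precisely the routine argument the paper has in mind: the paper omits the proof of this proposition entirely (``Proofs are routine arguments and are omitted''), and your valuation-transfer induction~--~pulling back refuting valuations along (onto) bounded morphisms, extending/restricting them across generated subframes and disjoint unions, with the admissibility check via $f(r_m)\subseteq r_m'$ and the inherited predicates for the $\lang_n$-versions~--~fills in exactly those routine details, including the empty-successor-set convention in the modal clause.
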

\begin{ex} \label{ex:sfg}
Assume that $n>1$ and that $k \prec n$, and set $\lang=\lang_\square$. The class $\class{C}_1=\{\framme{F}\in \cate{FR}^n \mid \forall u \ u \not \in r_k\}$ is not definable. Indeed, consider the two  $\lang_n$-frames $\framme{F}$ and $\framme{G}$ which both have an empty accessibility relation, whose universes are respectively $\{s\}$ and $\{t\}$ with $s\in r_\ell$ if and only if $\ell=n$  and $t\in r_\ell$ if and only if $k \preceq{\ell}$. Then the map $\framme{F} \to \framme{G}$ is an onto  $\lang_n$-bounded morphism, $\framme{F}\in \class{C}_1$ but $\framme{G}\not\in \class{C}_1$.

Similarly, the class $\class{C}_2=\{\framme{F}\in \cate{FR}^n \mid \exists u\ r_k \subseteq Ru\}$ is not definable. Indeed, consider the  $n$-frame $\framme{F}$ defined on the universe $\{u,v\}$ by setting $R=\{(u,v)\}$,   $u\in r_\ell$ if and only if $\ell=n$ and $v\in r_\ell$ if and only if $k\preceq{\ell}$. Then $\framme{F}\in \class{C}_2$ while it is not the case for the substructure $\framme{F}\vert_{\{v\}}$ which is an  $\lang_n$-generated subframe of $\framme{F}$.
\end{ex}

\begin{rem}\label{rem:notsimple}
Any $\class{C}\subseteq \cate{FR}_\lang$ is in bijective correspondence with the class $\class{C}^{n}$ of the trivial $\lang_n$-frames based on elements of $\class{C}$. Moreover, for any $\phi \in \Form_\lang$ and any $\framme{F}\in \class{C}$ we have $\framme{F}\models_n \phi$ if and only if $\framme{F}^n \models \phi$, which implies that the modal theory of $\class{C}$ and $\class{C}^n$ coincide. On the contrary, we cannot say that $\class{C}$ is modally definable if and only if so $\class{C}^n$ is. The simplest counterexample is given by taking $\lang=\lang_\square$ and $\class{C}=\cate{FR}$. We clearly have $\class{C}=\Mod_n(\varnothing)$ while $\class{C}^n$ is not definable according to Proposition \ref{prop:exo} (\ref{it:gss01}), as $\class{C}^n$ does not contain the $\lang_n$-bounded morphic images of its elements. As a result, the bijective correspondence between $\class{C}$ and $\class{C}^n$ does not permit deriving results about $\lucas_n$-definability of classes of $\lang$-frames from corresponding results about definability of classes of $\lang_n$-frames.
\end{rem}

\subsection{$\lang_n$-canonical extensions}

The most comfortable way to introduce canonical extensions of structures (Definitions \ref{defn:cano01} and \ref{defn:cano02}) is to go through the variety $\var{MMV}_n^\lang$ which is the algebraic counterpart of the modal extensions of \textsc{\L ukasiewicz} $n+1$-valued logics considered in  \cite{Hansoul2006,Teheux2012,Teheux2009}. 
In order to recall the definition of  $\var{MMV}_n^\lang$, we need to introduce some notation. For every $\bfx\in X^k$, every  $a\in X$, and  every $i\in\{1, \ldots, k\}$, we denote by $\bfx_i^a$ the $k$-tuple obtained from $\bfx$ by replacing $x_i$ with $a$.

 The variety $\var{MMV}_n^\lang$ is defined \cite{Teheux2012,Teheux2009} as the variety of $\lang$-algebras whose $\{\neg, \iimplies, 1\}$-reduct is an MV$_n$-algebra and that satisfy the equations
%\begin{align*}
% \nabla (\bfx_i^{y\iimplies z}) & =\nabla(\bfx_i^y)\iimplies\nabla(\bfx_i^z),\\
%\nabla(\bfx \star \bfx) & =\nabla\bfx \star \nabla\bfx,\\
%\nabla(\bfx_i^1) & =1,
%\end{align*}
\begin{equation}\label{eq:dualop}
\nabla (\bfx_i^{y\iimplies z}) =\nabla(\bfx_i^y)\iimplies\nabla(\bfx_i^z), \qquad \nabla(\bfx \star \bfx)  =\nabla\bfx \star \nabla\bfx, \qquad \nabla(\bfx_i^1)  =1,
\end{equation}
for any $k$-ary modality $\nabla$,  $i\in\{1, \ldots, k\}$, and $\star \in\{\odot, \oplus\}$. A $k$-ary operation on an algebra $\alg{A}\in \MV_n$ that satisfies the equations in \eqref{eq:dualop} is called a \emph{$k$-ary modal operator}.

It follows that if $\alg{A}\in \var{MMV}_n^\lang$ then the Boolean algebra $\idempo{\alg{A}}$ of idempotent elements of $\alg{A}$ (\emph{i.e.}, elements $a\in \alg{A}$ that satisfy $a \oplus a =a$) equipped with the operations $\restr{\nabla}{\idempo{A}}$ for every modality $\nabla$ belongs to $\var{MMV}_1$ (which is the variety of Boolean algebras with $\lang$-operators). By abuse of notation, we denote this  algebra  by $\idempo{\alg{A}}$. Recall that an $\lang$-homomorphism $\alpha_\cdot\colon \Form_\lang \to \alg{A}$ where $\alg{A}\in \var{MMV}_n^\lang$ is called an \emph{algebraic valuation on $\alg{A}$}, and $\struc{\alg{A}, \alpha_\cdot}$ is called \emph{an algebraic model} (see \cite[Definition 4.4]{Teheux2012} and \cite[Definition 2.32]{Teheux2009}).

\begin{defn}\label{defn:pano01}
The \emph{canonical  $\lang_n$-frame associated with $\alg{A} \in \var{MMV}_n^\lang$}, in notation $\alg{A}^\times$, is the structure $\struc{W, (r_m)_{m \preceq n}, (R_i)_{i \in I}}$ whose universe is the set $W=\var{MV}(\alg{A}, \lucas_n)$ of MV-algebra homomorphisms from $\alg{A}$ to $\lucas_n$ and whose structure is defined by
\[
r_m=\var{MV}(\alg{A}, \lucas_m)\] 
 for every $m \preceq{n}$, and
\begin{equation}\label{eq:dual}
uR_i \bfv \quad \text{ if }\quad  \forall \bfa\in \alg{A}^{k_i}\, (u(\nabla_i\bfa)=1 \implies \max\{ v_\ell(a_\ell)\mid 1 \leq \ell \leq k_i\}=1),
\end{equation}
 for every $i \in I$.

The \emph{canonical $\lang$-frame associated with $\alg{A}\in \var{MMV}_n^\lang$}, in notation $\alg{A}^+$, is defined as $\alg{A}^+=(\alg{A}^\times)_\sharp$.

If $\struc{\alg{A},\alpha_\cdot}$ is an algebraic model, the \emph{canoncial model} of $\struc{\alg{A}, \alpha_\cdot}$ is the model $\struc{\alg{A}^+, \Val}$ defined by 
$\Val(u,p)=u(\alpha_p)$ for every $p \in \Prop$ and $u \in \alg{A}^+$.
\end{defn}

 Recall the following result, which states how any modal operator $\nabla_i$ of an $\var{MMV}_n^\lang$-algebra $\alg{A}$ can be recovered from its canonical relation $R_i$. The case of unary modalities was given in \cite[Proposition 5.6]{Teheux2012}. The proof for languages $\lang$ involving $k$-ary modalities with $k\geq 2$ was so far unpublished and is given  in the Appendix.

\begin{prop}\label{lem:truth}
If $\struc{\alg{A}, \alpha_\cdot}$ is an algebraic model with canonical model $\struc{\alg{A}^+, \Val}$, then
\[
\Val(u,\phi)=u(\alpha_\phi),
\]
for every $\phi\in \Form_\lang$ and $u\in \alg{A}^+$.
\end{prop}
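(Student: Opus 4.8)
The plan is to argue by induction on the structure of $\phi$. The base cases $\phi=p$ and $\phi=0$ hold by the definition of $\Val$ and because the MV-homomorphism $u$ preserves the constant $0$; the cases $\phi=\neg\psi$ and $\phi=\psi\iimplies\chi$ are immediate from the induction hypothesis together with the facts that $u$ commutes with $\neg$ and $\iimplies$ on $\alg{A}$ and that $\alpha_\cdot$ is an $\lang$-homomorphism, while $\Val(u,-)$ computes these connectives by the \textsc{\L ukasiewicz} rules. All the content is therefore concentrated in the modal step $\phi=\nabla_i(\bfpsi)$. Writing $a_\ell=\alpha_{\psi_\ell}$ and using the induction hypothesis $\Val(w_\ell,\psi_\ell)=w_\ell(a_\ell)$, the inductive clause for $\Val$ on modalities reduces the claim to the purely algebraic identity
\[
u(\nabla_i(\bfa))=\min\{\max_{1\le\ell\le k_i} w_\ell(a_\ell)\mid \bfw\in R_i u\},
\]
which recovers the operator $\nabla_i$ from its canonical relation $R_i$. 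I would prove it by two inequalities, after first observing (via the definable \emph{threshold terms}) that $R_i$ is already determined by its behaviour on the idempotent skeleton $\idempo{\alg{A}}$. Here, for each $j\in\{1,\ldots,n\}$, $\tau_j$ denotes the unary MV-term with $\tau_j(x)=1$ if $x\ge j/n$ and $\tau_j(x)=0$ otherwise; these take idempotent values and are preserved by every MV-homomorphism.

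For the inequality $\le$ I would use that the many-valued comparison $u(\nabla_i(\bfa))\le\max_\ell w_\ell(a_\ell)$ is equivalent to the family of Boolean comparisons $\tau_j(u(\nabla_i(\bfa)))\le\max_\ell w_\ell(\tau_j a_\ell)$ for $1\le j\le n$, since $u$ and each $w_\ell$ preserve MV-terms. Using that $\tau_j$ commutes diagonally with $\nabla_i$, so that $\tau_j(\nabla_i(\bfa))=\nabla_i(\tau_j a_1,\ldots,\tau_j a_{k_i})$ with all arguments in $\idempo{\alg{A}}$, the left-hand side becomes $u(\nabla_i(\tau_j a_1,\ldots,\tau_j a_{k_i}))$; if this equals $1$, then plugging the idempotent tuple $(\tau_j a_1,\ldots,\tau_j a_{k_i})$ into the defining condition \eqref{eq:dual} of $\bfw\in R_i u$ yields $\max_\ell w_\ell(\tau_j a_\ell)=1$. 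As $\nabla_i$ restricted to $\idempo{\alg{A}}$ is a Boolean operator of $\var{MMV}_1$, this is exactly the soundness half of the classical truth lemma applied to the idempotent skeleton.

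For the inequality $\ge$ (the existence lemma) I must exhibit a witnessing tuple. Put $c=u(\nabla_i(\bfa))$ and assume $c=p/n<1$ (the case $c=1$ being trivial). The same commutation gives $u(\nabla_i(\tau_{p+1}a_1,\ldots,\tau_{p+1}a_{k_i}))=\tau_{p+1}(c)=0\ne 1$, so the idempotent tuple $\mathbf{e}=(\tau_{p+1}a_1,\ldots,\tau_{p+1}a_{k_i})$ satisfies $u(\nabla_i(\mathbf{e}))\ne 1$ in $\idempo{\alg{A}}$. I would then apply the Boolean existence lemma for $k_i$-ary operators on $\idempo{\alg{A}}\in\var{MMV}_1$ to obtain ultrafilters $\bar w_1,\ldots,\bar w_{k_i}$ of $\idempo{\alg{A}}$ standing in the Boolean canonical relation to $u$ and satisfying $\bar w_\ell(e_\ell)=0$ for every $\ell$. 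Lifting each $\bar w_\ell$ to an MV-homomorphism $w_\ell\colon\alg{A}\to\lucas_n$ — possible because restriction to idempotents maps $\var{MV}(\alg{A},\lucas_n)$ onto the ultrafilters of $\idempo{\alg{A}}$ for $\var{MV}_n$-algebras — yields a tuple $\bfw$ with $w_\ell(\tau_{p+1}a_\ell)=0$, hence $w_\ell(a_\ell)\le p/n=c$ for all $\ell$; and the idempotent characterization of $R_i$ shows $\bfw\in R_i u$. Thus $\min\{\max_\ell w_\ell(a_\ell)\mid\bfw\in R_i u\}\le c$, as required.

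The main obstacle is the existence lemma for $k_i\ge 2$, namely the simultaneous construction of the ultrafilters $\bar w_1,\ldots,\bar w_{k_i}$: they must jointly omit the $e_\ell$ while ensuring that every idempotent tuple $\bfb$ with $u(\nabla_i(\bfb))=1$ has at least one coordinate with $b_\ell\in\bar w_\ell$. The disjunctive coupling between the coordinates, inherited from the $\max$ in the semantics of $\nabla_i$, prevents choosing the ultrafilters independently and forces a careful maximal-consistent-tuple argument via Zorn's lemma. This is precisely the point at which the unary proof of \cite[Proposition 5.6]{Teheux2012} does not transfer verbatim, which is why the $k\ge 2$ case is relegated to the Appendix. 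A secondary, more routine verification is that the diagonal commutation $\tau_j(\nabla_i(\bfx))=\nabla_i(\tau_j x_1,\ldots,\tau_j x_{k_i})$ is indeed a consequence of the defining equations \eqref{eq:dualop}.
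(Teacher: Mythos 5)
Your proposal is correct, and its skeleton coincides with the paper's: induction with all the content in the modal clause, reduction of the algebraic identity $u(\nabla_i(\bfa))=\min\{\max_{\ell} w_\ell(a_\ell)\mid \bfw\in R_iu\}$ to the idempotent skeleton via the threshold terms $\tau_{j/n}$ (whose diagonal commutation with $\nabla_i$ follows, as you say, routinely from \eqref{eq:dualop}), and the bijection between $\var{MV}(\alg{A},\lucas_n)$ and the ultrafilters of $\idempo{\alg{A}}$. Where you genuinely diverge is the black box that produces the witnessing tuple in the $\geq$ direction. The paper routes both directions through Lemma \ref{lem:R}, which translates $\bfv\in Ru$ into the statement that $\bigl(v_1^{-1}(1)+\cdots+v_k^{-1}(1)\bigr)\cap\idempo{\alg{A}}^k$ is a prime $\idempo{\alg{A}}^k$-filtroid containing $\nabla^{-1}(u^{-1}(1))\cap\idempo{\alg{A}}^k$, and then obtains the hard inequality by contradiction from Bell's theorem (Theorem \ref{thm:prime}): a proper filtroid is the intersection of the prime filtroids containing it. You instead construct the witness directly, invoking the classical Jónsson--Tarski-style existence lemma for $k$-ary operators on the Boolean algebra $\idempo{\alg{A}}$ and lifting the resulting ultrafilters to MV-homomorphisms; the fact that the lifted tuple lies in $R_iu$ is exactly what Lemma \ref{lem:ba} provides for $k$-ary modalities. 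The two devices are equivalent in substance~---~a prime filtroid is nothing but a disjunctively coupled tuple of ultrafilters~---~so nothing is lost either way: Bell's theorem buys the paper precisely the point you flag as ``the main obstacle'' (the simultaneous, coupled choice of the $k$ ultrafilters), while your route buys the absence of any new machinery, since that coupled choice is already packaged in the polyadic existence lemma of Boolean modal/BAO theory (available, e.g., in \cite{Goldblatt1989} or \cite{Venema2001}, or as the Boolean case of \cite{Bell1996}). The one thing to correct in your write-up is the status of that lemma: it is not an open gap requiring a fresh maximal-consistent-tuple Zorn argument, but a citable result~---~indeed it is exactly what the paper's Theorem \ref{thm:prime} supplies in lattice-theoretic dress~---~so your proof closes once you cite it rather than defer it.
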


This result can be used to prove that $\alg{A}^\times$ is an  $\lang_n$-frame for every $\alg{A}\in\var{MMV}_n^\lang$.

\begin{prop}\label{prop:is}
If $\alg{A}\in \var{MMV}_n^\lang$, then $\alg{A}^\times$ is an  $\lang_n$-frame. 
\end{prop}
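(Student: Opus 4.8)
The plan is to verify the four clauses of Definition~\ref{def:enriched} for the structure $\alg{A}^\times=\struc{W,(r_m)_{m\preceq n},(R_i)_{i\in I}}$, where the only substantial point will be clause~(4). I would begin by recording the description of the layers that makes clauses (2) and (3) immediate. Since $\lucas_m$ is the unique subalgebra of $\lucas_n$ with $m+1$ elements whenever $m\preceq n$, a homomorphism $\alg{A}\to\lucas_n$ factors (uniquely) through $\lucas_m$ exactly when its image is contained in $\lucas_m$; hence, under the identification of $\var{MV}(\alg{A},\lucas_m)$ with a subset of $W=\var{MV}(\alg{A},\lucas_n)$ coming from Definition~\ref{defn:pano01}, we have $r_m=\{u\in W\mid u(\alg{A})\subseteq\lucas_m\}$. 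In particular $r_m\subseteq W$ and $r_n=W$. The lattice of subalgebras of $\lucas_n$ is isomorphic to the divisor lattice of $n$, with $\lucas_m\cap\lucas_q=\lucas_{\gcd(m,q)}$ inside $\lucas_n$; intersecting the two image conditions then yields $r_m\cap r_q=r_{\gcd(m,q)}$, which is clause~(3). Nonemptiness of $W$ (needed for $\alg{A}^\times$ to be a genuine frame, clause~(1)) follows from the subdirect representation of a nontrivial member of $\MV_n$ as a subdirect product of subalgebras of $\lucas_n$, providing at least one homomorphism onto some $\lucas_d\hookrightarrow\lucas_n$.

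The heart of the matter is clause~(4): if $u\in r_m$ and $\bfv\in R_iu$, then $v_\ell\in r_m$ for every $\ell$. First I would reduce to unary modalities. Fixing $i$ and $\ell$, let $h(x):=\nabla_i(\bfx)$ where $\bfx$ carries $x$ in position $\ell$ and $0$ in every other position. Specialising the three equations \eqref{eq:dualop} to these tuples (and using $0\oplus 0=0=0\odot 0$) shows that $h$ is a unary modal operator, namely $h(y\iimplies z)=h(y)\iimplies h(z)$, $h(y\oplus y)=h(y)\oplus h(y)$, $h(y\odot y)=h(y)\odot h(y)$ and $h(1)=1$. Instantiating the defining condition \eqref{eq:dual} of $R_i$ at tuples supported on coordinate $\ell$ shows that $\bfv\in R_iu$ forces $u(h(a))=1\Rightarrow v_\ell(a)=1$ for all $a\in\alg{A}$. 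Thus it suffices to prove the unary statement: if $u\in r_m$ and $u(h(a))=1\Rightarrow v_\ell(a)=1$ for all $a$, then $v_\ell\in r_m$.

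The key observation I would exploit is that the axioms \eqref{eq:dualop} say precisely that $h$ is a homomorphism for the reduct of $\alg{A}$ to the operations $\iimplies$, $x\mapsto x\oplus x$, $x\mapsto x\odot x$ and $1$, and that membership in the layer $\lucas_m$ is definable by a term in exactly this reduct. Concretely, I would use a unary term $\tau_m$ built only from $\iimplies$ and the squarings $x\oplus x$, $x\odot x$ such that, for $y\in\lucas_n$, $\tau_m(y)=1$ iff $y\in\lucas_m$; for $m=1$ one may take $\tau_1(x)=(x\oplus x)\iimplies(x\odot x)$, and finer layers arise by composition (already $\tau_1\circ\tau_1$ defines $\lucas_2$ inside $\lucas_4$). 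Because $h$ commutes with any such reduct-term, for every $a$ I get
\[
u\big(h(\tau_m(a))\big)=u\big(\tau_m(h(a))\big)=\tau_m\big(u(h(a))\big)=1,
\]
the middle equality using that $u$ is an MV-homomorphism and the last one using $u(h(a))\in u(\alg{A})\subseteq\lucas_m$ (as $u\in r_m$) together with the defining property of $\tau_m$. Feeding $\tau_m(a)$ into the successor condition then gives $v_\ell(\tau_m(a))=\tau_m(v_\ell(a))=1$, i.e.\ $v_\ell(a)\in\lucas_m$; since $a$ is arbitrary, $v_\ell\in r_m$, establishing clause~(4).

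The hard part will be the existence of the layer-defining term $\tau_m$ in the restricted reduct $\{\iimplies,\ x\oplus x,\ x\odot x,\ 1\}$ for every $m\preceq n$: the case $m=1$ is transparent, but the general case is the genuinely combinatorial ingredient, which I would draw from the term-theory underlying the natural dualities for $\MV_n$ and from the companion paper \cite{Teheux2012}. I would also note that Proposition~\ref{lem:truth}, by recovering each $\nabla_i$ from its canonical relation $R_i$, is what guarantees that the reduction to the unary operators $h$ is faithful; it can alternatively be used to rephrase clause~(4) through the identity $u(\nabla_i\bfa)=\min\{\max_\ell w_\ell(a_\ell)\mid\bfw\in R_iu\}$, but the term-commutation argument above is preferable precisely because it avoids having to isolate individual successors inside that minimum.
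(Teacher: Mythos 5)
Your reduction of the $k$-ary case to the unary one is exactly the paper's: you form the unary operator $h(x)=\nabla_i(\mathbf{0}_\ell^x)$, check via \eqref{eq:dualop} that it is a unary modal operator, and derive the successor condition $u(h(a))=1\Rightarrow v_\ell(a)=1$ by instantiating \eqref{eq:dual} at tuples supported on coordinate $\ell$ (the paper routes this last step through Proposition \ref{lem:truth}, but your direct derivation is valid, indeed slightly more economical). Your handling of clauses (1)--(3) of Definition \ref{def:enriched} is also fine. The divergence is in how you dispose of the unary statement itself, which the paper does not reprove but quotes from \cite[Lemma 7.4]{Teheux2012} -- and this is where your argument has a genuine gap.

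Your commutation argument rests on reading the first identity of \eqref{eq:dualop} as an equality, so that $h(y\iimplies z)=h(y)\iimplies h(z)$ and hence $h$ commutes with your term $\tau_m$. But that identity cannot be an equality: it is the algebraic K-axiom, an inequality $\nabla(\bfx_i^{y\iimplies z})\leq\nabla(\bfx_i^y)\iimplies\nabla(\bfx_i^z)$ (this is how $\var{MMV}_n^\lang$ is defined in \cite{Teheux2012,Teheux2009}, and it is forced by Proposition \ref{prop:algtran}: already for $n=1$, in the complex algebra of a frame where a world $w$ has exactly two successors $v_1,v_2$, taking $\alpha,\beta$ to be the characteristic functions of $\{v_1\}$ and $\{v_2\}$ gives $\square(\alpha\iimplies\beta)(w)=0$ while $(\square\alpha\iimplies\square\beta)(w)=1$). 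With only the inequality, your key chain degenerates to $u(h(\tau_m(a)))\leq\tau_m(u(h(a)))=1$, which is vacuous; you need the reverse inequality to be allowed to feed $\tau_m(a)$ into the successor condition. Nor can the argument be repaired within your framework: the operations with which a modal operator provably commutes are the diagonal squarings $x\oplus x$, $x\odot x$ and the constant $1$ (which is why the paper's appendix only ever uses Ostermann's threshold terms $\tau_{i/n}$, built from squarings alone), and every unary term in these operations is monotone and fixes $0$ and $1$, whereas the characteristic function of $\lucas_m$ in $\lucas_n$ (for $m\prec n$) takes value $1$ at $0$ and value $0$ at $1/n$, so it is not monotone. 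Hence no layer-defining term of the kind you need can commute with $h$, and the unary case -- precisely the content of \cite[Lemma 7.4]{Teheux2012} -- remains unproved in your proposal.
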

\begin{proof}
We have to prove that for any $k$-ary modal operator $\nabla$ on an algebra $\alg{A}\in \var{MV}_n$, we have $R r_m \subseteq r_m^k$ for every $m \preceq n$, where $r_m$ and $R$ are defined on $\var{MV}(\alg{A}, \lucas_n)$ as in Definition \ref{defn:pano01}. If $\nabla$ is unary, the proof is provided in \cite[Lemma 7.4]{Teheux2012}. Let us prove the general case and assume that $k > 1$. For the sake of contradiction, suppose that $v_i \not\in r_m$ for some  $m\preceq n$, some  $u \in r_m$, some $\bfv \in Ru$ and some $i \leq k$. Let us denote by $\square$ the unary modal operator defined on $\alg{A}$ by $\square a=\nabla(\mathbf{0}_i^a)$ for every $a \in \alg{A}$. It follows from Proposition \ref{lem:truth} that for any $a\in A$ we have $u(\square a)=1$ if and only if $\min\{w_i(a)\mid (u,\bfw) \in R\}=1$. We deduce that if $u(\square a)=1$ then $v_i(a)=1$, which means that $(u,v_i)\in R_\square$ where $R_\square$ is the relation associated with $\square$ as in \eqref{eq:dual}. It follows that $v_i\in r_m$ since $R_\square$ is the relation associated with a unary modal operator on $\alg{A}$. This gives the desired contradiction.
\end{proof}

%It is known that $\alg{A}^\times$ is an $\lucas_n$-valued $\lang$-frame (see \cite[Lemma 7.4]{Teheux2012} for the language $\lang_\square$ and \cite[Lemma 2.42]{Teheux2009} for the general case\todo{Include the general case!}).

% The $\lang$-frame associated with $\alg{A}\in \var{MMV}_n^\lang$ is defined as the underlying $\lang$-frame of $\alg{A}^\times$.

The following result identifies the canonical $\lang$-frame associated with $\mathcal{A}\in \var{MMV}_n$ with the canonical $\lang$-frame associated with its Boolean algebra of idempotent elements.

%\begin{defn}\label{defn:pano02}
%The \emph{canonical $\lang$-frame associated with $\alg{A}\in \var{MMV}_n^\lang$}, in notation $\alg{A}^+$, is defined as $\alg{A}^+=(\alg{A}^\times)_\sharp$.
%\end{defn}

\begin{lem}[{\cite[Lemma 2.38]{Teheux2009}}]\label{lem:ba}
For every $\alg{A}\in \var{MMV}_n$, we have $\alg{A}^+\cong \idempo{\alg{A}}^+$ and an isomorphism is given by the map $u \mapsto \restr{u}{\idempo{\alg{A}}}$.
\end{lem}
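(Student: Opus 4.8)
The plan is to show that the restriction map $\Phi\colon \alg{A}^+\to \idempo{\alg{A}}^+$, $u\mapsto \restr{u}{\idempo{\alg{A}}}$, is an isomorphism of $\lang$-frames; since both sides are the $\lang$-frame reducts of the canonical $\lang_n$-frames (Definition~\ref{defn:pano01}), the predicates $r_m$ play no role. First I would check that $\Phi$ is well defined: the idempotents of $\lucas_n$ are exactly $\{0,1\}=\lucas_1$, and any MV-homomorphism preserves $\oplus$ and hence maps idempotents to idempotents, so $\restr{u}{\idempo{\alg{A}}}$ is a homomorphism from $\idempo{\alg{A}}$ into $\lucas_1$, i.e. an element of the universe $\var{MV}(\idempo{\alg{A}},\lucas_1)$ of $\idempo{\alg{A}}^+$. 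It then remains to prove two things: that $\Phi$ is a bijection, and that it preserves and reflects each canonical relation $R_i$.

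For the bijection I would work through kernels. The term $s(a):=n\cdot a$ (the $n$-fold $\oplus$-sum) is idempotent in every member of $\var{MV}_n$ and, evaluated in $\lucas_n$, satisfies $s(x)=1$ iff $x>0$; consequently $a\in\ker u$ iff $s(a)\in\ker u\cap\idempo{\alg{A}}$, so the maximal MV-ideal $\ker u$ is recoverable from $\ker(\restr{u}{\idempo{\alg{A}}})=\ker u\cap\idempo{\alg{A}}$. Since $\lucas_n$ is simple, each $u\in\var{MV}(\alg{A},\lucas_n)$ factors as the quotient $\alg{A}\to\alg{A}/\ker u\cong\lucas_d$ (with $d\preceq n$) followed by the unique embedding $\lucas_d\hookrightarrow\lucas_n$, so $u$ is determined by $\ker u$; injectivity of $\Phi$ follows. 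For surjectivity, given a homomorphism $g\colon\idempo{\alg{A}}\to\lucas_1$ with (maximal) kernel $N$, I would set $M=\{a\in\alg{A}\mid s(a)\in N\}$, verify that $M$ is a maximal MV-ideal with $M\cap\idempo{\alg{A}}=N$ (using $s(a\oplus a')=s(a)\vee s(a')$ and the fact that in $\alg{A}/M$ every nonzero element $a$ satisfies $n\cdot a=1$, which forbids nontrivial idempotents and forces $\alg{A}/M\cong\lucas_d$), and take for $u$ the composite of the quotient with the unique embedding into $\lucas_n$; then $\Phi(u)=g$ because both have kernel $N$.

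The relational part rests on the identity $(\nabla_i\bfa)^n=\nabla_i(a_1^n,\dots,a_{k_i}^n)$, valid in $\var{MMV}_n^\lang$. This follows from the equation $\nabla(\bfx\odot\bfx)=\nabla\bfx\odot\nabla\bfx$ of \eqref{eq:dualop} by iterating to all $2$-power exponents and then using that $m$-fold $\odot$-powers stabilise for $m\geq n$ in $\var{MV}_n$. Because $a^n$ is idempotent and satisfies $u(a^n)=1$ iff $u(a)=1$, the identity yields $u(\nabla_i\bfa)=1$ iff $\restr{u}{\idempo{\alg{A}}}\bigl(\nabla_i(a_1^n,\dots,a_{k_i}^n)\bigr)=1$, while $\max_\ell v_\ell(a_\ell)=1$ iff $\max_\ell \restr{v_\ell}{\idempo{\alg{A}}}(a_\ell^n)=1$. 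As $\bfa$ ranges over $\alg{A}^{k_i}$ the tuple $(a_1^n,\dots,a_{k_i}^n)$ ranges exactly over $\idempo{\alg{A}}^{k_i}$ (idempotents satisfy $b^n=b$), so the defining condition \eqref{eq:dual} of $R_i$ on $\alg{A}^\times$ collapses precisely to the defining condition of $R_i$ on $\idempo{\alg{A}}^\times$; hence $u\,R_i\,\bfv$ iff $\Phi(u)\,R_i\,\Phi(\bfv)$, which gives both preservation and reflection at once.

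The main obstacle is the relational step: one must find the right idempotent approximant of an arbitrary argument $\bfa$. The naive choice (replacing $a_\ell$ by its support idempotent $s(a_\ell)$ or by its top idempotent $a_\ell^n$ in isolation) fails, because $\nabla_i$ is only order preserving and need not commute with such replacements. The point is that the correct approximant is dictated by the algebra itself through the identity $(\nabla_i\bfa)^n=\nabla_i(a_1^n,\dots,a_{k_i}^n)$, which simultaneously produces the idempotent $(\nabla_i\bfa)^n$ detecting whether $u(\nabla_i\bfa)=1$. The bijection step, by contrast, is routine structure theory of $\var{MV}_n$-algebras, the only substantive ingredient being the idempotent support term $s$.
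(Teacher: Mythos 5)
Your proof is correct and follows essentially the same route as the paper: the relational part rests on exactly the same idempotent-power identity $\nabla(\bfa^m)=(\nabla\bfa)^m$ obtained from \eqref{eq:dualop} together with stabilisation of $\odot$-powers in $\var{MV}_n$ (the paper uses the exponent $2n$ where you use $n$; both work), and your ``collapse'' of condition \eqref{eq:dual} to idempotent tuples is just the paper's preservation/reflection argument packaged as a single equivalence. The only divergence is that the paper simply cites \cite[Proposition 3.1]{Niederkorn2001} for bijectivity of the restriction map, whereas you re-prove that fact from scratch via the support term $s(a)=n\cdot a$ and recovery of maximal ideals from their Boolean traces; your argument there is sound (the step ``no nontrivial idempotents forces $\alg{A}/M\cong\lucas_d$'' is precisely the standard $\var{MV}_n$ structure theory that the citation encapsulates), it is merely longer than what the paper writes.
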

\begin{proof}
It is known that the map $\psi$ defined on $\alg{A}^+$ by $\psi(u)=\restr{u}{\idempo{\alg{A}}}$ is a bijection between $\alg{A}^+$ and $\idempo{\alg{A}}^+$  \cite[Proposition 3.1]{Niederkorn2001}. It is also clear by definition \eqref{eq:dual} of the relation $R$ associated to a $k$-ary modal operator $\nabla$ that if $(u,v_1, \ldots, v_k)\in R^{\alg{A^+}}$ then $(\psi(u),\psi(v_1), \ldots, \psi(v_k))\in R^{\idempo{\alg{A^+}}}$. Conversely, let $(\psi(u),\psi(v_1), \ldots, \psi(v_k))\in R^{\idempo{\alg{A^+}}}$ and $(a_1, \ldots, a_k)\in \alg{A}^k$ be such that $u(\nabla(a_1, \ldots, a_k))=1$. It follows that $1=(u(\nabla(a_1, \ldots, a_k)))^{2n}=u(\nabla(a_1^{2n}, \ldots, a_k^{2n}))$ where $(a_1^{2n}, \ldots, a_k^{2n})\in \idempo{A}^{k}$. This means by definition of $R^{\idempo{A}^+}$ that $v_i(a_i^{2n})=1$ for some $i\leq k$, and hence that $v_i(a_i)=1$. 
\end{proof}

% It is worth recalling \cite[Lemma 2.38]{Teheux2009} that for every $\alg{A}\in \var{MMV}_n$ it holds ;;;;;;;;;;
%\begin{equation}\label{eqn:ba}
%\alg{A}^+\cong \idempo{\alg{A}}^+,
%\end{equation}
%where an isomorphism is given by the map $u \mapsto \restr{u}{\idempo{\alg{A}}}$.

Now that we have a canonical way to associate structures to algebras of $\var{MMV}_n^\lang$, we consider the converse construction. That is, we define some ways to associate algebras to structures.
These constructions generalize the standard Boolean ones.

\begin{defn}[{\cite[ Definition 7.7]{Teheux2012}}]
The \emph{$\lucas_n$-complex algebra} of an $\lang$-frame $\framme{F}=\struc{W, (R_i)_{i \in I}}$ is the $\lang$-algebra
\[
\framme{F}_{+_n}=\struc{\lucas_n^W, \neg, \iimplies, 1, (\nabla_i)_{i \in I}}
\]
where $\neg$, $\iimplies$ and $1$ are defined componentwise and 
\[
\nabla_i \bfalpha (u)=\min \{\max_{1\leq \ell \leq k_i} \alpha_\ell(v_\ell)\mid \bfv \in R_i u\},
\]
for any modality $\nabla_i$, any $\bfx\in \lucas_n^W$, and  any $u\in W$.

The \emph{$\lucas_n$-tight complex algebra} of an  $\lang_n$-frame $\framme{F}=\struc{W, (r_m)_{ m  \preceq{n}}, (R_i)_{i\in I}}$ is the algebra
\[
\framme{F}_{\times}=\struc{\prod_{u\in W} \lucas_{s_u}, \neg, \iimplies, 1, (\nabla_i)_{i \in I}}
\]
where $s_u=\gcd\{m\preceq{n} \mid u \in r_m\}$ for every $u\in W$ and where the operations are defined  as for $\framme{F}_{+_n}$.
\end{defn}

 The following result, whose proof is routine, shows that  complex constructions give an algebraic translation of the validity relations. We use the standard equivalence between $ \lang$-formulas and $\lang$-terms.
\begin{prop}\label{prop:algtran}
Let $\phi \in \Form_\lang$.
\begin{enumerate}
\item\label{it:odfr02} If $\framme{F}$ is an $\lang$-frame then $\framme{F}_{+_n}\in \var{MMV}_n^\lang$, and $\framme{F} \models_n \phi$ if and only if $\framme{F}_{+_n} \models \phi = 1$.
\item If $\framme{F}$ is an  $\lang_n$-frame then  $\framme{F}_{\times}\in \var{MMV}_n^\lang$, and $\framme{F} \models \phi$ if and only if $\framme{F}_\times \models \phi =1$.
\end{enumerate}
\end{prop}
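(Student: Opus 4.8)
The plan is to prove both items by reducing each validity assertion to an equational one in the corresponding complex algebra, the link being a routine induction on formulas. Before doing so I must check that the two complex algebras indeed belong to $\var{MMV}_n^\lang$.

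Membership is settled first. For $\framme{F}_{+_n}$ the $\{\neg,\iimplies,1\}$-reduct is the power $\lucas_n^W$, hence lies in $\var{MV}_n$ because $\var{MV}_n$ is a variety and $\lucas_n\in\var{MV}_n$; that each $\nabla_i$ is a $k_i$-ary modal operator, i.e.\ satisfies the equations of \eqref{eq:dualop}, is a routine computation from the pointwise $\min$--$\max$ definition of $\nabla_i$ and the behaviour of $\min$ and $\max$ in the chain $\lucas_n$. For $\framme{F}_\times$ the reduct is the product $\prod_{u\in W}\lucas_{s_u}$; since each $m$ occurring in the definition of $s_u$ divides $n$ we have $s_u\preceq n$, so every factor $\lucas_{s_u}$ is a subalgebra of $\lucas_n$ and thus belongs to $\var{MV}_n$, and as $\var{MV}_n$ is closed under products the reduct is again an $\var{MV}_n$-algebra. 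The modal-operator equations are then checked exactly as for $\framme{F}_{+_n}$.

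The one genuinely new point, and the step I expect to be the crux, is that on $\framme{F}_\times$ the operations must be shown to be \emph{well defined}, that is, $\nabla_i\bfalpha$ must land in $\prod_{u\in W}\lucas_{s_u}$ whenever $\bfalpha$ does. Fix $u$ and recall $s_u=\gcd\{m\preceq n\mid u\in r_m\}$. Condition (3) of Definition \ref{def:enriched} makes $\{m\mid u\in r_m\}$ closed under $\gcd$, so $u\in r_{s_u}$; condition (4) then yields $R_i u\subseteq r_{s_u}^{k_i}$, whence every component $v_\ell$ of a tuple $\bfv\in R_i u$ lies in $r_{s_u}$ and therefore satisfies $s_{v_\ell}\preceq s_u$, i.e.\ $\lucas_{s_{v_\ell}}\subseteq\lucas_{s_u}$. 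Consequently $\alpha_\ell(v_\ell)\in\lucas_{s_u}$ for each $\ell$, and since $\lucas_{s_u}$ is closed under $\min$ and $\max$ we get $\nabla_i\bfalpha(u)=\min\{\max_\ell\alpha_\ell(v_\ell)\mid\bfv\in R_iu\}\in\lucas_{s_u}$, as required.

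Finally I would establish the two equivalences through the bijection between valuations and equational assignments. Given a valuation $\Val$, set $\bar p(u)=\Val(u,p)$, obtaining an element $\bar p$ of $\lucas_n^W$ in the first case and of $\prod_u\lucas_{s_u}$ in the second (here the requirement $\Val(u,\Prop)\subseteq\lucas_{s_u}$ is precisely the condition of Definition \ref{def:mobaseon} that $\Val$ be based on $\framme{F}$); conversely every assignment arises in this way. A straightforward induction on $\phi$—using that $\neg,\iimplies,1$ are componentwise and that the modal clause for $\Val$ coincides term-for-term with the definition of $\nabla_i$ in the complex algebra—gives $\bar\phi(u)=\Val(u,\phi)$ for every $u$, where $\bar\phi$ is the value of the term $\phi$ under $p\mapsto\bar p$. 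Hence $\framme{F}\models_n\phi$ (resp.\ $\framme{F}\models\phi$) holds exactly when $\bar\phi$ equals the top element $\mathbf{1}$ for every assignment, which is the meaning of $\framme{F}_{+_n}\models\phi=1$ (resp.\ $\framme{F}_\times\models\phi=1$). The only delicate ingredient is the well-definedness established above; the remaining verifications, as the statement indicates, are routine.
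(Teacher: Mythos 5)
Your proof is correct and is essentially the paper's own: the paper explicitly declares this proof routine and omits it, and what you supply---membership of the $\{\neg,\iimplies,1\}$-reducts in $\var{MV}_n$, well-definedness of $\nabla_i$ on $\framme{F}_\times$ via conditions (3) and (4) of Definition \ref{def:enriched} (the genuinely delicate point, that $\nabla_i\bfalpha(u)\in\lucas_{s_u}$, which you handle correctly), and the valuation--assignment bijection followed by induction on formulas---is exactly that routine verification. One caveat you inherit from the paper rather than introduce yourself: the first identity in \eqref{eq:dualop} must be read as the K-inequality $\nabla(\bfx_i^{y\iimplies z})\leq\nabla(\bfx_i^y)\iimplies\nabla(\bfx_i^z)$, as in \cite{Teheux2012}, since the literal equality fails in complex algebras already for $n=1$ (two successors, $y$ true at one and false at the other, $z$ false at both), so your ``routine computation'' of that clause establishes the inequality, not an equality.
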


%We now have everything in hand to define the canonical extension of an $\lucas_n$-valued $\lang$-frame.

\begin{defn}\label{defn:cano01}
The \emph{canonical extension} of an  $\lang_n$-frame $\framme{F}$ is the structure $\Ce(\framme{F})=(\framme{F}_\times)^\times$.

%The \emph{canonical extension} of an $\lang$-frame $\framme{F}$ is the structure $\Ce(\framme{F})$ defined as $\Ce(\framme{F})=(\Ce(\framme{F}^1_b))_\sharp$.

%is the structure $\struc{W^e, \{r_m^e \mid m \preceq{n}\}, (R_i^e)_{i \in I}}$ defined by
%\begin{itemize}
%\item $W^e=\var{MV}(\framme{F}_\times, \lucas_n)$ is the set of MV-homomorphisms from $\framme{F}_\times$ to $\lucas_n$,
%\item $u \in r_m^e$ if $u(\framme{F}_\times)\subseteq \lucas_m$,
%\item $(u, \bfw)\in R_i^e$ if  $\bigvee_{\ell=1}^{k_i} w_\ell(\alpha_k)=1$ for every $\boldsymbol{\alpha}\in \mathfrak{F}_\times^{k_i}$ such that $u(\nabla_i\bfalpha)=1$.
%\end{itemize}

%If $\framme{F}$ is an $\lang$-frame, the \emph{canonical extension of $\framme{F}$}, in notation $\Ce(\framme{F})$, is the $\lang$-frame $(\Ce(\framme{F}^1_b))_\sharp$.
\end{defn}

The notion of \emph{canonical extension}  \cite{Benthem1979,Venema2001,Goldlblatt1975} of  an $\lang$-frame $\framme{F}$,  also known as the \emph{ultrafilter extension}, is a classical tool in Boolean modal logic. It turns out that it is also relevant in our many-valued setting. It is convenient to adopt the following equivalent construction of this extension.
\begin{defn}\label{defn:cano02}
The \emph{canonical extension} of an $\lang$-frame $\framme{F}$ is the $\lang$-frame $\Ce(\framme{F})=(\idempo{\framme{F}_{+_n}})^+$.
\end{defn}

It is not difficult to check that $\idempo{\framme{F}_{+_n}} \cong \idempo{\framme{F}_{+_1}}$ for every $\lang$-frame $\framme{F}$. This isomorphism together with Lemma \ref{lem:ba} establish the equivalence between Definition \ref{defn:cano02} and the usual definition of the canonical extension of an $\lang$-frame.

\begin{lem}\label{lem:bgt}
If $\framme{F}$ is an  $\lang_n$-frame, then the map $f\colon \MV(\framme{F}_\times, \lucas_n) \to \MV(\idempo{\framme{F}_\times}, \lucas_n)\colon u \mapsto u|_{\idempo{\framme{F}_\times}}$ is an isomorphism between $\Ce(\framme{F})_\sharp$  and $\Ce(\framme{F}_\sharp)$.
\end{lem}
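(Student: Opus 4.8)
The plan is to unfold both sides of the claimed isomorphism down to canonical $\lang$-frames of MV-algebras, and then to recognise the map $f$ as the instance of the isomorphism of Lemma \ref{lem:ba} for the algebra $\alg{A}=\framme{F}_\times$. The only genuine content beyond bookkeeping will be the identification of the two Boolean algebras of idempotents that appear on either side.

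First I would unfold the left-hand frame. By Definition \ref{defn:cano01} together with the fact that $\alg{A}^+=(\alg{A}^\times)_\sharp$ (Definition \ref{defn:pano01}), we have $\Ce(\framme{F})_\sharp=((\framme{F}_\times)^\times)_\sharp=(\framme{F}_\times)^+$, whose universe is $\MV(\framme{F}_\times,\lucas_n)$. For the right-hand frame, Definition \ref{defn:cano02} gives $\Ce(\framme{F}_\sharp)=(\idempo{(\framme{F}_\sharp)_{+_n}})^+$, whose universe is $\MV(\idempo{(\framme{F}_\sharp)_{+_n}},\lucas_n)$. Thus the codomain of $f$ will match $\Ce(\framme{F}_\sharp)$ as soon as we know that $\idempo{\framme{F}_\times}$ and $\idempo{(\framme{F}_\sharp)_{+_n}}$ coincide as objects of $\var{MMV}_1$.

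The core step is precisely this identification. I would observe that the universe $\prod_{u\in W}\lucas_{s_u}$ of $\framme{F}_\times$ is the universe of a subalgebra of $(\framme{F}_\sharp)_{+_n}=\struc{\lucas_n^W,\ldots}$: it is closed under the componentwise $\{\neg,\iimplies,1\}$-operations because $\lucas_{s_u}\subseteq\lucas_n$, and closure under each $\nabla_i$ is exactly what the tightness condition $R_iu\subseteq r_m^{k_i}$ of Definition \ref{def:enriched} guarantees. Indeed, $u\in r_{s_u}$ (condition (3) makes $\{m\preceq n\mid u\in r_m\}$ closed under $\gcd$), so for $\bfv\in R_iu$ each $v_\ell$ lies in $r_{s_u}$, whence $s_{v_\ell}\preceq s_u$ and $\lucas_{s_{v_\ell}}\subseteq\lucas_{s_u}$; therefore the defining $\min$-$\max$ expression for $\nabla_i$ stays in $\lucas_{s_u}$. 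Since the idempotents of every $\lucas_m$ are just $\{0,1\}$ and $\{0,1\}\subseteq\lucas_{s_u}$ for every $u$, the idempotent elements of both $\framme{F}_\times$ and $(\framme{F}_\sharp)_{+_n}$ are exactly the $\{0,1\}$-valued functions on $W$, and the modal operators agree on them because they are given by the same $\min$-$\max$ formula. Hence $\idempo{\framme{F}_\times}=\idempo{(\framme{F}_\sharp)_{+_n}}$, and consequently $\Ce(\framme{F}_\sharp)=(\idempo{\framme{F}_\times})^+$.

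Finally I would apply Lemma \ref{lem:ba} to the algebra $\alg{A}=\framme{F}_\times\in\var{MMV}_n^\lang$: it asserts that $u\mapsto u|_{\idempo{\framme{F}_\times}}$ is an isomorphism of $\lang$-frames from $(\framme{F}_\times)^+$ onto $(\idempo{\framme{F}_\times})^+$. This map is exactly $f$, and by the two identifications above its source is $\Ce(\framme{F})_\sharp$ and its target is $\Ce(\framme{F}_\sharp)$, which finishes the proof. The main obstacle is the subalgebra and idempotent identification of the third paragraph; once the tightness condition of Definition \ref{def:enriched} is invoked to locate the values of the $\nabla_i$-operations, the remainder is a direct appeal to Lemma \ref{lem:ba}.
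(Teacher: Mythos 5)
Your proof is correct and takes essentially the same route as the paper: unfold $\Ce(\framme{F})_\sharp=((\framme{F}_\times)^\times)_\sharp=(\framme{F}_\times)^+$ and $\Ce(\framme{F}_\sharp)=(\idempo{(\framme{F}_\sharp)_{+_n}})^+$, identify $\idempo{\framme{F}_\times}=\idempo{(\framme{F}_\sharp)_{+_n}}$, and conclude by Lemma \ref{lem:ba} applied to $\alg{A}=\framme{F}_\times$. The only difference is one of detail: the paper dismisses the identification of the two idempotent algebras as following ``from the definitions,'' whereas you make it explicit (subalgebra closure of $\prod_{u\in W}\lucas_{s_u}$ under the $\nabla_i$ via the tightness condition and gcd-closure, then the observation that both algebras have the $\{0,1\}$-valued functions as idempotents), which is precisely the content hidden in that step.
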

\begin{proof}
We obtain successively that
\[
\Ce(\framme{F})_\sharp=((\framme{F}_\times)^\times)_\sharp=(\framme{F}_\times)^+\cong\idempo{\framme{F}_\times}^+
=\idempo{\framme{F}_{+_n}}^+=\Ce(\framme{F}_\sharp),
\]
where the equalities follow from the definitions and the isomorphism is given by Lemma \ref{lem:ba}.
%First, we note that $f$ is valued in the universe of $\Ce(\framme{F}_\sharp)$ since $\framme{F}_\times$ is a subalgebra of $\framme{F}_{+_n}$ that satisfies $\idempo{\framme{F}_\times}=\idempo{\framme{F}_{+_n}}$ by definition. Moreover, it is well known that $f$ is one-to-one and onto \cite[Proposition 3.1]{Niederkorn2001}. It follows from Definition \ref{defn:pano01}  that \smash{$(f(u), f(v_1), \ldots, f(v_{k_i}))\in R_i^{\Ce(\framme{F}_\sharp)}$} for every \smash{$(u, v_1, \ldots, v_{k_i})\in R_i^{\Ce(\framme{F})_\sharp}$} and every $i\in I$. 
%
%To prove the converse implication, assume that $\smash{(f(u), f(v_1), \ldots, f(v_{k_i})) \in R_i^{\Ce(\framme{F}_\sharp)}}$
%% on the one hand
%%\begin{equation}\label{eqn:bfr}
%%(f(u), f(v_1), \ldots, f(v_{k_i})) \in R_i^{\Ce(\framme{F}_\sharp)},
%%\end{equation}
%while  $\smash{(u, v_1, \ldots, v_{k_i})\not \in R_i^{\Ce(\framme{F})_\sharp}}$.
%%on the other hand
%%\[(u, v_1, \ldots, v_{k_i})\not \in R_i^{\Ce(\framme{F})_\sharp}.\] 
%Then, there exists $\bfa \in \framme{F}_\times^{k_i}$ such that $u(\nabla_i \bfa)=1$ while $v_\ell(a_\ell)<1$ for every $\ell \in\{1, \ldots, k_i\}$. It follows that $u(\nabla_i \bfa^{2^n})=u((\nabla_i \bfa)^{2^n})=1$ while $\bigvee\{v_\ell(a_\ell^{2^n}) \mid \ell \in \{1, \ldots, k_i\}\}=0$. We conclude  that $(f(u), f(v_1), \ldots, f(v_{k_i}))$ cannot belong to $\smash{R_i^{\Ce(\framme{F}_\sharp)}}$.
\end{proof}

%\begin{lem}\label{lem:bgt}
%If $\framme{F}$ is an $\lucas_n$-valued $\lang$-frame, then $\Ce(\framme{F})_\sharp \cong \Ce(\framme{F}_\sharp)$.
%\end{lem}
%\begin{proof}
%We obtain successively that
%\[
%\Ce(\framme{F})_\sharp =((\framme{F}_\times)^\times)_\sharp\cong(\framme{F}_\times)^+\cong(\idempo{\framme{F}_\times})^+\cong (\idempo{\framme{F}_+})^+=\Ce(\framme{F}),
%\]
%where the first isomorphism is obtained by definition, the second by equation \eqref{eqn:ba} and the third is Lemma 7.9 in \cite{Teheux2012}.
%\end{proof}

We introduce the notion of $\lang_n$-canonical extension at the level of models.
\begin{defn}
%Let $\model{M}=\struc{W, (R_i)_{i\in I}, \Val}$ be an $\lucas_n$-valued $\lang$-model. The \emph{underlying $\lucas_n$-valued $\lang$-frame of $\model{M}$} is the $\lucas_n$-valued $\lang$-frame $\struc{W, (r_m)_{ m \preceq{n}}, (R_i)_{i  \in I}}$ defined by setting 
%$u \in r_m$ if and only if $\Val(u,\Form)\subseteq \lucas_m$ for any $u\in W$ and  $m\preceq{n}$.\todo{Useless?}

The \emph{$\lang_n$-canonical extension} of a model $\model{M}=\struc{\framme{F}, \Val}$ is the $\lucas_n$-valued $\lang$-model  $\Ce_n(\model{M})=\struc{\Ce(\framme{F}), \Val^e}$  defined by setting $\Val^e(u,p)=u(\Val(-, p))$ for every $p \in \Prop$ and every world $u$ of $\Ce(\framme{F})$.

%% Il n'y a pas besoin, il me semble, de passer pour le $\lucas_n$-valued $\lang$-frame sous jacent.
%The \emph{$\lucas_n$-valued canonical extension} of $\model{M}$ is the $\lucas_n$-valued model  $\Ce_n(\model{M})=\struc{\Ce(\framme{F}), \Val^e}$ based on the canonical extension $\Ce(\framme{F})$ of its underlying $\lucas_n$-valued frame $\framme{F}$ and defined by setting $\Val^e(u,p)=u(\Val(-, p))$ for any $p \in \Prop$ and any world $u$ of $\Ce(\framme{F})$.
\end{defn}

To state the properties of the $\lang_n$-canonical extensions of the $\lucas_n$-valued $\lang$-models, we need to introduce the notion of \emph{submodel}.
\begin{defn}
Let $\model{M}=\struc{\framme{F}, \Val}$ and $\model{M}'=\struc{\framme{F}', \Val'}$ be two $\lucas_n$-valued $\lang$-models.  We say that $\model{M}$ is a \emph{submodel} of $\model{M}'$ if $\framme{F}$ is a substructure of $\framme{F}'$ and $\Val(u,p)=\Val'(u,p)$ for every world $u$ of $\framme{F}$ and  every $p \in \Prop$.
\end{defn}
 
\begin{prop}\label{prop:canonns}
Let $\model{M}=\struc{\framme{F},\Val}$ be an $\lucas_n$-valued $\lang$-model based on the $\lang_n$-frame $\framme{F}$. Denote by $\iota$  the map 
\[\iota\colon \framme{F}\to \Ce({\framme{F}})\colon w\mapsto \pi_w^{\framme{F}_\times}\] where $\pi_w^{\framme{F}_\times}$ denotes the projection map $\framme{F}_\times \to \lucas_{s_w}$ from $\framme{F}_\times $ onto its $w$-th factor. %Then, $\iota$ satisfies the following properties.
\begin{enumerate}
\item\label{it:bwz01} The map $\iota$ identifies $\framme{F}$ as a substructure of $\Ce(\framme{F})$.
\item\label{it:bwz02} The map $\iota$ identifies $\model{M}$ as a submodel of $\Ce_n(\model{M})$.
\end{enumerate}
\end{prop}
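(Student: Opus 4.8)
The plan is to prove that $\iota$ is an injective map that both preserves and reflects the predicates $r_m$ and the relations $R_i$, which establishes (\ref{it:bwz01}); part (\ref{it:bwz02}) then drops out of the definition of $\Val^e$. Throughout I would work with $\Ce(\framme{F})=(\framme{F}_\times)^\times$, whose universe is $\var{MV}(\framme{F}_\times,\lucas_n)$ and whose structure is given by Definition \ref{defn:pano01} applied to the algebra $\framme{F}_\times$. First, each projection $\pi_w^{\framme{F}_\times}$ is an onto MV-homomorphism from $\framme{F}_\times$ to $\lucas_{s_w}\subseteq\lucas_n$, hence a genuine element of $\var{MV}(\framme{F}_\times,\lucas_n)$, so $\iota$ is well defined; injectivity is immediate, since for $w\neq w'$ the element of $\framme{F}_\times$ taking value $1$ on the coordinate $w$ and $0$ elsewhere separates $\pi_w^{\framme{F}_\times}$ from $\pi_{w'}^{\framme{F}_\times}$.

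Next I would handle the predicates. The key observation is the equivalence
\[
w\in r_m \quad\iff\quad s_w\preceq m,
\]
which I would derive from the axioms of Definition \ref{def:enriched}. The identity $r_m\cap r_q=r_{\gcd(m,q)}$ forces $(r_m)_{m\preceq n}$ to be monotone (taking $m\preceq q$ gives $r_m\subseteq r_q$) and forces the set $\{m\preceq n\mid w\in r_m\}$ to be closed under $\gcd$; being finite, this set therefore contains its own $\gcd$, namely $s_w$, which is thus its least element, so $w\in r_{s_w}$. Combining $w\in r_{s_w}$ with monotonicity yields the displayed equivalence. Since $r_m^{\Ce(\framme{F})}=\var{MV}(\framme{F}_\times,\lucas_m)$ is exactly the set of homomorphisms in $\var{MV}(\framme{F}_\times,\lucas_n)$ whose image lies in $\lucas_m$, and the image of $\pi_w^{\framme{F}_\times}$ is $\lucas_{s_w}$, I conclude that $w\in r_m$ iff $\iota(w)\in r_m^{\Ce(\framme{F})}$.

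For the relations I would unfold \eqref{eq:dual} together with the definition of $\nabla_i$ on $\framme{F}_\times$, using $\pi_w^{\framme{F}_\times}(\alpha)=\alpha(w)$: the triple $(\iota(u),\iota(w_1),\ldots,\iota(w_{k_i}))$ lies in $R_i^{\Ce(\framme{F})}$ precisely when, for every $\bfalpha\in(\framme{F}_\times)^{k_i}$, the equality $\min\{\max_{1\leq\ell\leq k_i}\alpha_\ell(v_\ell)\mid\bfv\in R_iu\}=1$ implies $\max_{1\leq\ell\leq k_i}\alpha_\ell(w_\ell)=1$. If $\bfw\in R_iu$ the premise, by taking $\bfv=\bfw$, already forces the conclusion, so the relation is preserved. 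The harder, reflecting direction is the main obstacle: assuming $\bfw\notin R_iu$ I would exhibit a refuting tuple by letting $\alpha_\ell\in\framme{F}_\times$ take value $0$ at $w_\ell$ and $1$ at every other world (both values lie in each $\lucas_{s_x}$). Since each $\bfv\in R_iu$ differs from $\bfw$ in some coordinate, $\max_\ell\alpha_\ell(v_\ell)=1$ for all such $\bfv$ (vacuously true when $R_iu=\varnothing$), while $\max_\ell\alpha_\ell(w_\ell)=0$; this $\bfalpha$ violates the implication, so $\iota(\bfw)\notin R_i^{\Ce(\framme{F})}\iota(u)$. This tightness argument for $(\framme{F}_\times)^\times$ completes (\ref{it:bwz01}).

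Finally, (\ref{it:bwz02}) is a one-line computation resting on (\ref{it:bwz01}). The hypothesis that $\model{M}$ is based on the $\lang_n$-frame $\framme{F}$ guarantees $\Val(u,\Prop)\subseteq\lucas_{s_u}$ for every $u$, so $\Val(-,p)\in\framme{F}_\times$ and $\Val^e(\iota(w),p)$ is defined; then $\Val^e(\iota(w),p)=\pi_w^{\framme{F}_\times}(\Val(-,p))=\Val(w,p)$. Combined with the substructure embedding furnished by (\ref{it:bwz01}), this shows that $\iota$ identifies $\model{M}$ with a submodel of $\Ce_n(\model{M})$.
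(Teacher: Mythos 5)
Your proof is correct, but for part (\ref{it:bwz01}) it takes a genuinely different route from the paper's. The paper does not verify the embedding directly: it cites the classical Boolean fact (\cite[p.~95]{Venema2001}) that $w\mapsto \pi_w$ identifies $\framme{F}_\sharp$ as a substructure of its ultrafilter extension $\Ce(\framme{F}_\sharp)$, transfers this along the isomorphism $f\colon \Ce(\framme{F})_\sharp \cong \Ce(\framme{F}_\sharp)$ of Lemma \ref{lem:bgt} (noting that $f^{-1}\circ\iota'=\iota$), and then dispatches the predicates $r_m$ with the remark that the equivalence $w\in r_m^{\framme{F}} \iff \iota(w)\in r_m^{\Ce(\framme{F})}$ ``follows directly from the definition of $\iota$.'' You instead prove everything from the definitions of $\framme{F}_\times$ and $(\cdot)^\times$: well-definedness and injectivity of $\iota$, preservation of $R_i$ by instantiating $\bfv=\bfw$, and reflection via the $\{0,1\}$-valued tuple $\bfalpha$ vanishing exactly at the coordinates $w_\ell$ --- this is precisely the many-valued analogue of the classical argument that the ultrafilter extension is tight at principal ultrafilters, which the paper imports by citation rather than reproving. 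Your approach buys self-containedness, and it is in fact more careful on the predicate part: you derive $w\in r_{s_w}$ from the gcd axiom of Definition \ref{def:enriched} (finiteness of $\{m\preceq n \mid w\in r_m\}$ plus closure under $\gcd$), a point the paper leaves implicit, and you correctly observe that $\pi_w$ has image $\lucas_{s_w}$, so that $\iota(w)\in r_m^{\Ce(\framme{F})}=\MV(\framme{F}_\times,\lucas_m)$ iff $s_w\preceq m$ iff $w\in r_m$. What the paper's route buys is brevity and reuse of already-established machinery (Lemma \ref{lem:bgt} and the Boolean literature), in line with its stated policy of avoiding duplication. Part (\ref{it:bwz02}) is essentially identical in both: the computation $\Val^e(\iota(w),p)=\iota(w)(\Val(-,p))=\pi_w^{\framme{F}_\times}(\Val(-,p))=\Val(w,p)$, where your added remark that the hypothesis ``based on the $\lang_n$-frame'' is what guarantees $\Val(-,p)\in\framme{F}_\times$ is correct and worth keeping.
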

\begin{proof}
(\ref{it:bwz01}) It is known \cite[p. 95]{Venema2001} that the map
\[
\iota'\colon \framme{F}_\sharp \to \Ce(\framme{F}_\sharp)\colon w \mapsto \pi_w^{\idempo{\framme{F}_{\sharp +_n}}}
\]
identifies $\framme{F}_\sharp$ as a substructure of $\Ce(\framme{F}_\sharp)$. Using notation introduced in Lemma \ref{lem:bgt}, the map $f^{-1}\circ \iota'$ identifies $\framme{F}_\sharp$ as a substructure of $\Ce(\framme{F})_\sharp$. Since $f^{-1}\circ \iota'=\iota$, it remains to show that 
\[w\in r_m^{\framme{F}} \iff \iota(w) \in r_m^{\Ce(\framme{F})}\]
for every $m\preceq{n}$ and $w \in W$. This equivalence follows directly from the definition of $\iota$.

(\ref{it:bwz02}) For any world $w$ of $\model{M}$ and $p \in \Prop$, we obtain successively
\[
\Val^e(\iota(w),p)=\iota(w)(\Val(-,p))=\pi_w^{\framme{F}_\times}(\Val(-, p))=\Val(w, p),
\]
where the first equality is obtained by definition of $\Val^e$, the second and the third ones by definition of $\iota$ and $\pi_w^{\framme{F}_\times}$, respectively.
\end{proof}

\begin{prop}\label{prop:mdf}
Let $\model{M}=\struc{\framme{F}, \Val}$ be an $\lucas_n$-valued $\lang$-model based on the $\lang$-frame $\framme{F}$. For any world $u$ of $\Ce_n(\model{M})$ and any $\phi \in \Form_\lang$ we have $\Val^e(u, \phi)=u(\Val(-, \phi))$.
\end{prop}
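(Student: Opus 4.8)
The plan is to reduce the statement to the algebraic Truth Lemma (Proposition~\ref{lem:truth}) by exhibiting $\Ce_n(\model{M})$ as the canonical model of a suitable algebraic model built on the complex algebra $\framme{F}_{+_n}$.

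First I would set $\alg{A}=\framme{F}_{+_n}$, which belongs to $\var{MMV}_n^\lang$ by Proposition~\ref{prop:algtran}~(\ref{it:odfr02}), and consider the map $v_\cdot\colon \Form_\lang \to \alg{A}$ sending $\phi$ to the function $\Val(-,\phi)\in \lucas_n^W$. The key observation is that $v_\cdot$ is an $\lang$-homomorphism, hence an algebraic valuation on $\alg{A}$ with $v_p=\Val(-,p)$. This is immediate from a comparison of the two inductive definitions: the MV-connectives of $\framme{F}_{+_n}$ are computed componentwise, exactly as $\Val$ is extended using the \L ukasiewicz interpretation in each world, and the clause $\nabla_i\bfalpha(u)=\min\{\max_\ell \alpha_\ell(v_\ell)\mid \bfv\in R_iu\}$ defining $\nabla_i$ on $\framme{F}_{+_n}$ coincides with the modal clause defining $\Val(u,\nabla_i\bfphi)$. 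Since $\Form_\lang$ is the absolutely free $\lang$-algebra over $\Prop$, the equality $v_\phi=\Val(-,\phi)$ then holds for every $\phi$ by a routine induction on the structure of $\phi$.

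Next I would identify the canonical model of $\struc{\alg{A}, v_\cdot}$ with $\Ce_n(\model{M})$. By Lemma~\ref{lem:ba} the restriction map $u\mapsto \restr{u}{\idempo{\alg{A}}}$ is an isomorphism between $\alg{A}^+=(\framme{F}_{+_n})^+$ and $\idempo{\framme{F}_{+_n}}^+=\Ce(\framme{F})$; under this identification a world $u$ of $\Ce(\framme{F})$ is viewed as an MV-homomorphism $\framme{F}_{+_n}\to\lucas_n$, which is precisely the reading of the expression $u(\Val(-,p))$ in the definition of $\Val^e$. Thus the canonical model $\struc{\alg{A}^+, \Val''}$ of $\struc{\alg{A}, v_\cdot}$, whose valuation satisfies $\Val''(u,p)=u(v_p)=u(\Val(-,p))$, is exactly $\Ce_n(\model{M})=\struc{\Ce(\framme{F}),\Val^e}$.

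Finally, Proposition~\ref{lem:truth} applied to $\struc{\alg{A}, v_\cdot}$ yields $\Val^e(u,\phi)=u(v_\phi)=u(\Val(-,\phi))$ for every world $u$ and every $\phi\in \Form_\lang$, which is the desired identity. I expect the only genuinely delicate point to be the modal case of the underlying induction, but this is precisely the content of Proposition~\ref{lem:truth} (proved in the Appendix) and so need not be repeated here; the remaining work is the purely formal matching of the two constructions, the one subtlety being to invoke Lemma~\ref{lem:ba} so that worlds of $\Ce(\framme{F})$ are correctly regarded as homomorphisms on all of $\framme{F}_{+_n}$ rather than only on its idempotents.
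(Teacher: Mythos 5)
Your proof is correct and follows essentially the same route as the paper: you exhibit $\Ce_n(\model{M})$ as the canonical model of the algebraic model $\struc{\framme{F}_{+_n}, \alpha_\cdot}$ with $\alpha_p=\Val(-,p)$ and then invoke the Truth Lemma (Proposition~\ref{lem:truth}). The additional detail you supply~--~checking that $\phi\mapsto\Val(-,\phi)$ is the homomorphic extension of $p \mapsto \Val(-,p)$, and making explicit the identification of worlds of $\Ce(\framme{F})$ with MV-homomorphisms on $\framme{F}_{+_n}$ via Lemma~\ref{lem:ba}~--~is exactly what the paper's proof leaves implicit.
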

\begin{proof}
The map $\alpha_\cdot\colon \Form_\lang \to \framme{F}_{+_n}$ defined as $\alpha_p=\Val(-,p)$ for every $p \in \Prop$ and extended as an $\lang$-homomorphism  is an algebraic valuation on $\framme{F}_{+_n}$. Moreover, $\Ce_n(\model{M})$ is the canonical model associated with the algebraic model $\struc{\framme{F}_{+_n}, \alpha_{\cdot}}$. It follows from Proposition \ref{lem:truth}  that $\Val^e(u, \phi)=u(\alpha_\phi)$, while equality $\Val(-,\phi)=\alpha_\phi$ holds by definition.
\end{proof}
\begin{cor}\label{cor:canon}
Let $\model{M}=\struc{\framme{F}, \Val}$ be an $\lucas_n$-valued $\lang$-model. For every world $u$ of $\model{M}$ and every $\phi \in \Form_\lang$, we have $\Val^e(\iota(u),\phi)=\Val(u,\phi)$, where $\iota$ is the map defined in Proposition \ref{prop:canonns}.
\end{cor}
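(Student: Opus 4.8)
The plan is to read the corollary directly off Propositions \ref{prop:mdf} and \ref{prop:canonns}, with no further induction needed. Fix a world $u$ of $\model{M}$ and a formula $\phi \in \Form_\lang$. The element $\iota(u)$ is a world of $\Ce_n(\model{M})$, so Proposition \ref{prop:mdf} applies to it and gives
\[
\Val^e(\iota(u), \phi) = \iota(u)\bigl(\Val(-, \phi)\bigr).
\]
Thus the whole task reduces to evaluating the homomorphism $\iota(u)$ at the single element $\Val(-, \phi)$ of the complex algebra.

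Next I would unwind the definition of $\iota$ recalled in Proposition \ref{prop:canonns}: the map $\iota$ sends $u$ to the projection $\pi_u$ of the relevant complex algebra onto its $u$-th factor (the $u$-th factor of $\framme{F}_\times$, respectively $\framme{F}_{+_n}$, according to whether $\framme{F}$ is read as an $\lang_n$-frame or as an $\lang$-frame). The function $\Val(-, \phi)$ is, by the very definition of the pointwise operations of the complex algebra, the element whose $u$-th coordinate is the scalar $\Val(u, \phi)$. Hence $\iota(u)\bigl(\Val(-, \phi)\bigr) = \pi_u\bigl(\Val(-, \phi)\bigr) = \Val(u, \phi)$, and combining this with the displayed equality yields $\Val^e(\iota(u), \phi) = \Val(u, \phi)$, as required.

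I expect no genuine obstacle here: the corollary is a formal consequence of the two preceding propositions. Its entire content is that the Truth-Lemma-style identity $\Val^e(v, \phi) = v(\Val(-, \phi))$ of Proposition \ref{prop:mdf} specialises, at the projection worlds $v = \iota(u)$, to the submodel identity that Proposition \ref{prop:canonns}(\ref{it:bwz02}) established for propositional variables, now extended to all formulas. The only point requiring (minimal) care is the bookkeeping: that the element $\Val(-, \phi)$ acted on by $\iota(u)$ is literally the same object occurring in Proposition \ref{prop:mdf}, and that evaluating the projection $\pi_u$ at it returns the coordinate $\Val(u, \phi)$. Should one wish to bypass Proposition \ref{prop:mdf} entirely, the statement also follows by a routine induction on $\phi$ taking Proposition \ref{prop:canonns}(\ref{it:bwz02}) as the base case, since $\Val^e$ is extended by exactly the same \textsc{\L ukasiewicz} and modal clauses as $\Val$; but invoking Proposition \ref{prop:mdf} renders this induction superfluous.
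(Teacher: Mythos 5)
Your proof is correct and follows the paper's own argument exactly: the paper likewise derives the identity by applying Proposition \ref{prop:mdf} at the world $\iota(u)$ and then evaluating the projection $\iota(u)=\pi_u$ at the element $\Val(-,\phi)$, which returns $\Val(u,\phi)$ by definition of $\iota$. The alternative induction you mention is indeed superfluous, as you note.
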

\begin{proof}
We have 
\[
\Val^e(\iota(u), \phi)=\iota(u)(\Val(-,\phi))=\Val(u, \phi),
\]
where the first equality is obtained by Proposition \ref{prop:mdf} and the second by definition of $\iota$.
\end{proof}

From Corollary \ref{cor:canon} we obtain the fact that canonical extensions of structures reflect the validity relation, as stated in the next result.
\begin{cor}\label{cor:canon02}
Let $\framme{F}$ and $\framme{G}$ be an $\lang$-frame and an  $\lang_n$-frame, respectively, and $\phi$ be a formula.
\begin{enumerate}
\item\label{it:ldsg01} If $\Ce(\framme{G}) \models  \phi$ then $\framme{G} \models \phi$.
\item\label{it:ldsg02} If $\Ce(\framme{F})\models_n \phi$ then $\framme{F} \models_n \phi$.
\end{enumerate}
\end{cor}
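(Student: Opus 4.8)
The plan is to deduce both reflections from the single fact, recorded in Corollary~\ref{cor:canon}, that the embedding $\iota$ of a structure into its canonical extension preserves all truth values. Concretely, to establish $\framme{G} \models \phi$ (resp.\ $\framme{F} \models_n \phi$) I will show that $\phi$ takes value $1$ at every world of an \emph{arbitrary} model based on $\framme{G}$ (resp.\ $\framme{F}$); the hypothesis that $\phi$ is valid in the canonical extension supplies the value $1$ at the image of each world under $\iota$, and Corollary~\ref{cor:canon} transports it back. No closure property of the extension is required here, in contrast to the harder preservation direction.

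First I would treat part~(\ref{it:ldsg01}). Fix an arbitrary $\lucas_n$-valued model $\model{M} = \struc{\framme{G}, \Val}$ based on the $\lang_n$-frame $\framme{G}$ and form its $\lang_n$-canonical extension $\Ce_n(\model{M}) = \struc{\Ce(\framme{G}), \Val^e}$. The one point that needs checking is that $\Ce_n(\model{M})$ is a genuine model \emph{based on} the $\lang_n$-frame $\Ce(\framme{G}) = (\framme{G}_\times)^\times$ in the sense of Definition~\ref{def:mobaseon}, i.e.\ that $\Val^e$ respects the truth-value constraints carried by the predicates $r_m$. This is immediate from the definitions: a world $u \in r_m^{\Ce(\framme{G})} = \MV(\framme{G}_\times, \lucas_m)$ is by construction an MV-homomorphism into $\lucas_m$, so $\Val^e(u, p) = u(\Val(-, p)) \in \lucas_m$ for every $p \in \Prop$. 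Granting this, the assumption $\Ce(\framme{G}) \models \phi$ gives $\Ce_n(\model{M}) \models \phi$, that is $\Val^e(w, \phi) = 1$ for every world $w$ of $\Ce(\framme{G})$. Taking $w = \iota(u)$ and invoking Corollary~\ref{cor:canon} yields $\Val(u, \phi) = \Val^e(\iota(u), \phi) = 1$ at every world $u$ of $\framme{G}$, so $\model{M} \models \phi$. As $\model{M}$ was arbitrary, $\framme{G} \models \phi$.

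Part~(\ref{it:ldsg02}) runs along identical lines but is lighter, since an $\lang$-frame imposes no $r_m$ constraints: for any $\lucas_n$-valued model $\model{M} = \struc{\framme{F}, \Val}$ based on $\framme{F}$, the canonical extension $\Ce_n(\model{M}) = \struc{\Ce(\framme{F}), \Val^e}$ is automatically a model based on the $\lang$-frame $\Ce(\framme{F})$. The hypothesis $\Ce(\framme{F}) \models_n \phi$ then forces $\Val^e(w, \phi) = 1$ at every world $w$, and Corollary~\ref{cor:canon} transfers this along $\iota$ to give $\Val(u, \phi) = 1$ at every world $u$ of $\framme{F}$; since $\model{M}$ is arbitrary, $\framme{F} \models_n \phi$.

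I expect the only step requiring genuine care to be the constraint verification in part~(\ref{it:ldsg01}); the rest is a direct unwinding of the definitions of $\models$ and $\models_n$ together with Corollary~\ref{cor:canon}. It is worth noting that, because $\iota$ already embeds $\framme{G}$ (resp.\ $\framme{F}$) into its extension while preserving truth values, the argument is essentially the observation that validity passes downward along a truth-preserving inclusion of models.
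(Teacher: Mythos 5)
Your proof is correct. For part (\ref{it:ldsg01}) you follow the paper's own route: the paper dismisses this part with ``follows directly from Corollary \ref{cor:canon}'', and the one point you rightly single out for verification --- that $u \in r_m^{\Ce(\framme{G})} = \MV(\framme{G}_\times, \lucas_m)$ forces $\Val^e(u,p)=u(\Val(-,p)) \in \lucas_m$, so that $\Ce_n(\model{M})$ is indeed a model \emph{based on} the $\lang_n$-frame $\Ce(\framme{G})$ in the sense of Definition \ref{def:mobaseon} --- is exactly what that ``directly'' hides. For part (\ref{it:ldsg02}) you genuinely diverge: the paper does not rerun the direct argument but instead reduces (\ref{it:ldsg02}) to (\ref{it:ldsg01}) through the trivial $\lang_n$-frame $\framme{F}^n$, using $\Ce(\framme{F}) = \Ce((\framme{F}^n)_\sharp) = \Ce(\framme{F}^n)_\sharp$ (Lemmas \ref{lem:trivial} and \ref{lem:bgt}) to see that $\Ce(\framme{F}^n)$ is an $\lang_n$-frame based on $\Ce(\framme{F})$, whence $\Ce(\framme{F})\models_n\phi$ gives $\Ce(\framme{F}^n)\models\phi$, then $\framme{F}^n\models\phi$ by part (\ref{it:ldsg01}), then $\framme{F}\models_n\phi$ by Lemma \ref{lem:trivial}. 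Your version buys symmetry and economy: both parts become the same three-line transfer of truth values along $\iota$, and Lemmas \ref{lem:trivial} and \ref{lem:bgt} are not needed at all. What the paper's detour buys is definitional hygiene: the embedding $\iota$ is formally defined in Proposition \ref{prop:canonns} only for models based on $\lang_n$-frames (as $w\mapsto\pi_w^{\framme{F}_\times}$), so your part (\ref{it:ldsg02}) tacitly re-reads it as $w\mapsto\pi_w^{\framme{F}_{+_n}}$ and applies Corollary \ref{cor:canon} in that setting. This is harmless --- Proposition \ref{prop:mdf}, the engine behind Corollary \ref{cor:canon}, is stated precisely for $\lang$-frames, and the identity $(\framme{F}^n)_\times=\framme{F}_{+_n}$ makes the two readings of $\iota$ agree --- and in fact the paper's reduction through $\framme{F}^n$ is nothing more than the formal justification of this identification. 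Both proofs are sound; yours is the shorter and more uniform, the paper's is the more scrupulous about invoking each construction only where it was defined.
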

\begin{proof}
(\ref{it:ldsg01}) follows directly from Corollary \ref{cor:canon}. For (\ref{it:ldsg02}), first note that
\[
\Ce(\framme{F}) = \Ce((\framme{F}^n)_\sharp) = \Ce(\framme{F}^n)_\sharp,
\]
where the first equality is obtained by Lemme \ref{lem:trivial} (\ref{it:triv03}) and the second one by Lemma \ref{lem:bgt}. Thus, $\Ce(\framme{F}^n)$ is an  $\lang_n$-frame based on $\Ce(\framme{F})$. If follows by Lemma \ref{lem:trivial} (\ref{it:triv02}) that if $\phi\in \Form_\lang$ is such that $\Ce(\framme{F})\models_n \phi$, then $\Ce(\framme{F}^n) \models \phi$. Then, we obtain $\framme{F}^n \models \phi$ by statement (\ref{it:ldsg01}), or equivalently that $\framme{F}\models_n \phi$ by Lemma   \ref{lem:trivial} (\ref{it:triv01}).
\end{proof}

\section{\textsc{Goldblatt}~-~\textsc{Thomason} Theorems}

We pursue the algebraic approach to frame definability in our proofs of the \textsc{Goldblatt}~-~\textsc{Thomason} theorems. The proofs rely on two ingredients: a correspondence between construction operators for algebras and frames, and a construction of the  canonical extensions and $\lang_n$-canonical extensions of $\lang$-frames and $\lang_n$-frames, respectively, as ultrapowers. Regarding the first ingredient, we only expose the tools needed for our purpose, without developing a real duality. Our approach is a step-by-step adaptation of the original proof of the \textsc{Goldblatt}~-~\textsc{Thomason} Theorem \cite{Goldlblatt1975}.

\begin{prop}\label{prop:sum}
If $\{\framme{F}_i\mid i \in I\}$ is a family of  $\lang_n$-frames then $(\disjun_{i \in I} \framme{F}_i)_\times$ is isomorphic to $\prod_{i \in I} (\framme{F}_i)_\times$. In particular, if $\{\framme{F}_i \mid i \in I\}$ is a family of $\lang$-frames then $(\disjun_{i \in I} \framme{F}_i)_{+_n}$ is isomorphic to $\prod_{i \in I} (\framme{F}_i)_{+_n}$.
\end{prop}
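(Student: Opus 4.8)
The plan is to write down the evident candidate map and to verify that it is at once an MV-isomorphism and compatible with every modal operator. Put $\framme{F}=\disjun_{i\in I}\framme{F}_i$; its universe is the disjoint union $W=\coprod_{i\in I}W_i$ of the universes $W_i$, its predicates are $r_m=\coprod_{i\in I}r_m^i$ for each $m\preceq n$, and every modal relation of $\framme{F}$ is the disjoint union of the corresponding relations on the summands, so that for a modality with relation $R$ we have $Ru\subseteq W_i^{k}$ whenever $u\in W_i$ (with $k$ its arity). The crucial observation is that the integer $s_u=\gcd\{m\preceq n\mid u\in r_m\}$ attached to a world $u\in W_i$ is read off from the very same predicates whether $u$ is regarded inside $\framme{F}$ or inside $\framme{F}_i$; hence $s_u$ does not depend on the ambient frame. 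Consequently the carrier of $\framme{F}_\times$ splits as
\[
\prod_{u\in W}\lucas_{s_u}=\prod_{i\in I}\,\prod_{u\in W_i}\lucas_{s_u},
\]
and I would take as candidate isomorphism the map $\Psi\colon\framme{F}_\times\to\prod_{i\in I}(\framme{F}_i)_\times$ sending an element $a$ to the family $(a|_{W_i})_{i\in I}$ of its restrictions, which is a bijection by the displayed identity.

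First I would check that $\Psi$ preserves the MV-structure: the operations $\neg$, $\iimplies$ and $1$ are defined coordinatewise over $W$ in $\framme{F}_\times$ and over $W_i$ in each $(\framme{F}_i)_\times$, and restriction to $W_i$ commutes with coordinatewise operations, so $\Psi$ is an MV-isomorphism. Then I would treat the modal operators, which carry the only real content. Fix a $k$-ary modality $\nabla$ with associated relation $R$, a tuple $(a_1,\ldots,a_k)$ of elements of $\framme{F}_\times$, and a world $u\in W_i$. Since $Ru\subseteq W_i^{k}$, every world $v_\ell$ appearing in
\[
\bigl(\nabla(a_1,\ldots,a_k)\bigr)(u)=\min\Bigl\{\max_{1\le\ell\le k}a_\ell(v_\ell)\Bigm|\bfv\in Ru\Bigr\}
\]
lies in $W_i$, so this value depends only on the restrictions $a_1|_{W_i},\ldots,a_k|_{W_i}$ and coincides with the corresponding value of $\nabla$ computed in $(\framme{F}_i)_\times$. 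This is exactly the statement that $\Psi$ intertwines $\nabla$ on $\framme{F}_\times$ with the product operator on $\prod_{i\in I}(\framme{F}_i)_\times$, which finishes the first claim.

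For the ``in particular'' clause I would reduce to the first claim via trivial $\lang_n$-frames. For an $\lang$-frame $\framme{G}$ the associated trivial $\lang_n$-frame $\framme{G}^n$ has $s_u=n$ for every world $u$, whence its carrier is $\lucas_n^{W}$ and $(\framme{G}^n)_\times=\framme{G}_{+_n}$, the modal operators agreeing because $(\framme{G}^n)_\sharp=\framme{G}$ by Lemma~\ref{lem:trivial}~(\ref{it:triv03}). Moreover, forming trivial $\lang_n$-frames commutes with disjoint unions, that is $(\disjun_{i\in I}\framme{F}_i)^n=\disjun_{i\in I}(\framme{F}_i^n)$, since on both sides $r_n$ is the whole universe and $r_m=\varnothing$ for $m\neq n$. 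Applying the first part to the family $\{\framme{F}_i^n\mid i\in I\}$ then yields
\[
(\disjun_{i\in I}\framme{F}_i)_{+_n}=\bigl((\disjun_{i\in I}\framme{F}_i)^n\bigr)_\times=\bigl(\disjun_{i\in I}(\framme{F}_i^n)\bigr)_\times\cong\prod_{i\in I}(\framme{F}_i^n)_\times=\prod_{i\in I}(\framme{F}_i)_{+_n},
\]
as required. The only mild obstacle throughout is the bookkeeping showing that the relational structure of a disjoint union keeps each modality local to a single summand and that $s_u$ is insensitive to the ambient frame; once these are in place, every remaining step is formal.
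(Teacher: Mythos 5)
Your proof is correct and takes essentially the same approach as the paper: the paper's entire proof consists of writing down the very map you call $\Psi$ (there denoted $f$, with $f(\alpha)_i(u)=\alpha(u)$) and asserting it is an isomorphism, while you supply the verification details (locality of $s_u$ and of the relations, coordinatewise MV-operations, the modal computation) that the paper leaves to the reader. Your reduction of the ``in particular'' clause to the first claim via trivial $\lang_n$-frames is a tidy bit of bookkeeping, but it is the same underlying argument, not a different route.
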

\begin{proof}
The map $f\colon (\disjun_{i \in I} \framme{F}_i)_\times \to \prod_{i \in I} (\framme{F}_i)_\times$ defined by $f(\alpha)_i(u)=\alpha(u)$ for every $i \in I$, $\alpha \in (\disjun_{i \in I} \framme{F}_i)_\times$, and $u \in \framme{F}_i$ is an isomorphism.
\end{proof}

\begin{prop}\label{prop:dsffg}
If $f\colon \alg{A} \to \alg{A}'$ is an $\var{MMV}_n$-homomorphism between two $\var{MMV}_n$-algebras $\alg{A}$ and $\alg{A}'$, then the map $f^\times\colon \alg{A}'{}^\times \to \alg{A}^\times\colon u \mapsto u \circ f$ is an  $\lang_n$-bounded morphism. In particular, it is a bounded $\lang$-morphism from $\alg{A}'{}^+ \to \alg{A}^+$. 

In addition, if $f$ is one-to-one then $f^\times$ is onto. If $f$ is onto then $f^\times$ is one-to-one.
\end{prop}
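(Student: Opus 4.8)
The plan is to verify, in turn, that $f^\times$ preserves the predicates $r_m$, that it is a bounded morphism between the underlying $\lang$-frames $\alg{A}'{}^+$ and $\alg{A}^+$, and finally that it reverses injectivity and surjectivity. Once $f^\times$ is known to be an $\lang_n$-bounded morphism, the ``in particular'' clause is automatic, since $\alg{A}^+ = (\alg{A}^\times)_\sharp$ and an $\lang_n$-bounded morphism is by definition a bounded $\lang$-morphism on the reducts. Well-definedness is clear, a composite $u \circ f$ of MV-homomorphisms being again one. Preservation of the $r_m$ is immediate from the definitions: if $u \in r'_m = \var{MV}(\alg{A}', \lucas_m)$ then $u(\alg{A}') \subseteq \lucas_m$, whence $(u \circ f)(\alg{A}) \subseteq u(\alg{A}') \subseteq \lucas_m$ and $f^\times(u) = u \circ f \in r_m$.

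For the forth part of the bounded-morphism condition, fix a $k$-ary modality $\nabla = \nabla_i$ and $\bfv \in R'_i u$. Since $f$ is an $\lang$-homomorphism, $f(\nabla \bfb) = \nabla(f b_1, \ldots, f b_k)$ for every $\bfb \in \alg{A}^k$, so $(u \circ f)(\nabla \bfb) = u(\nabla(f b_1, \ldots, f b_k))$; whenever this equals $1$, applying the definition \eqref{eq:dual} of $R'_i$ to the tuple $(f b_1, \ldots, f b_k) \in \alg{A}'{}^{k}$ yields $\max_\ell v_\ell(f b_\ell) = 1$, i.e. $\max_\ell f^\times(v_\ell)(b_\ell) = 1$. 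Hence $(f^\times v_1, \ldots, f^\times v_k) \in R_i f^\times(u)$, which is the inclusion $f^\times(R'_i u) \subseteq R_i f^\times(u)$.

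The back inclusion $R_i f^\times(u) \subseteq f^\times(R'_i u)$ is the heart of the matter: it is an existence statement, requiring one to lift a tuple $\bfw \in R_i(u\circ f)$ to a tuple $\bfv \in R'_i u$ along $f$, and classically this is exactly where a prime-filter-extension argument is needed. My plan is to sidestep it by reducing to the Boolean setting through Lemma \ref{lem:ba}. Indeed the relations $R_i$ depend only on the Boolean reducts: the maps $\psi\colon \alg{A}^+ \to \idempo{\alg{A}}^+$ and $\psi'\colon \alg{A}'{}^+ \to \idempo{\alg{A}'}^+$, both given by restriction to idempotents, are frame isomorphisms by Lemma \ref{lem:ba}, and $f$ restricts to an $\var{MMV}_1$-homomorphism $g\colon \idempo{\alg{A}} \to \idempo{\alg{A}'}$ (it preserves idempotents because it preserves $\oplus$). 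A one-line computation on idempotents shows $\psi \circ (f^\times)_\sharp = g^+ \circ \psi'$, where $g^+\colon \idempo{\alg{A}'}^+ \to \idempo{\alg{A}}^+$ is the Boolean dual $U \mapsto U \circ g$. As $g^+$ is a bounded morphism by the classical Boolean theory \cite{Venema2001} and $\psi, \psi'$ are frame isomorphisms, $(f^\times)_\sharp = \psi^{-1} \circ g^+ \circ \psi'$ is a bounded morphism, which delivers the missing back inclusion. Thus the genuine difficulty is absorbed entirely into Lemma \ref{lem:ba}, where the many-valued relation $R_i$ is tied to its Boolean counterpart.

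It remains to treat injectivity and surjectivity. If $f$ is onto then $f^\times$ is one-to-one by a direct argument: $u_1 \circ f = u_2 \circ f$ forces $u_1 = u_2$, since every element of $\alg{A}'$ is of the form $f(a)$. If $f$ is one-to-one, I must show that every $w \in \var{MV}(\alg{A}, \lucas_n)$ has a preimage, i.e. extends along $f$ to some $u \in \var{MV}(\alg{A}', \lucas_n)$; using the bijection between $\var{MV}(\alg{B}, \lucas_n)$ and the ultrafilters of $\idempo{\alg{B}}$ from \cite{Niederkorn2001}, this reduces to extending the ultrafilter $w|_{\idempo{\alg{A}}}$ along the Boolean embedding $g = f|_{\idempo{\alg{A}}}$, which is possible by the ultrafilter-extension theorem. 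Alternatively, both reversals follow at once from the natural duality for $\MV_n$ of \cite{Niederkorn2001}, which sends embeddings to surjections and surjections to embeddings.
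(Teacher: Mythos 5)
Your proof is correct and takes essentially the same route as the paper: both arguments reduce the bounded-morphism property and the injectivity/surjectivity reversal to the classical Boolean situation via Lemma \ref{lem:ba} (restriction to the idempotents) together with the Boolean dual-of-a-homomorphism theorem (cited in the paper as \cite[Theorem 3.2.4]{Goldblatt1989}), and both handle preservation of the predicates $r_m$ by direct inspection. The only difference is one of detail: the paper's proof is a two-line citation, whereas you spell out the commuting square $\psi \circ (f^\times)_\sharp = g^+ \circ \psi'$ and the ultrafilter-extension step explicitly.
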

\begin{proof}
From Lemma \ref{lem:ba} and \cite[Theorem 3.2.4]{Goldblatt1989} we first obtain that $f^\times$ is a bounded $\lang$-morphism from $(\alg{A}'{}^\times)_\sharp\cong\alg{A}'{}^+$ to $(\alg{A}^\times)_\sharp\cong\alg{A}^+$. Moreover, $f^\times$ clearly satisfies $f(u) \in r_m^{\alg{A}^\times}$ for every $u \in r_m^{\alg{A}'{}^\times}$.

The second part of the proof follows once again by Lemma \ref{lem:ba} and \cite[Theorem 3.2.4]{Goldblatt1989}.
\end{proof}

Theorem \ref{thm:canoisult} lifts the following known result at the level of  $\lang_n$-frames.
 \begin{thm}[{\cite[Theorem 3.6.1]{Goldblatt1989}}]\label{thm:canoisult00}
The canonical extension of an $\lang$-frame $\framme{F}$ is a bounded morphic image of an ultrapower of $\framme{F}$.
\end{thm}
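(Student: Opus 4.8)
The plan is to recover $\Ce(\framme{F})$ as the image of an ultrapower of $\framme{F}$ under the natural \emph{trace} map sending a point of the ultrapower to the ultrafilter on $W$ it induces. By Definition \ref{defn:cano02} and the remark following it, $\Ce(\framme{F})$ is the classical ultrafilter extension of $\framme{F}$: its universe is the set of ultrafilters of the powerset algebra $\idempo{\framme{F}_{+_n}}\cong\mathcal{P}(W)$, and each relation $R_i$ is the dual relation \eqref{eq:dual} of the Boolean complex operation $\nabla_i$ on $\idempo{\framme{F}_{+_n}}$.

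First I would expand $\framme{F}$ to a first-order structure $\framme{F}^\ast$ over the signature of $\lang$-frames enriched with one unary predicate $P_X$ for each subset $X\subseteq W$, interpreting $P_X$ by $X$. Then I would fix an ultrapower $\framme{M}=(\framme{F}^\ast)^J/U$ that is $\kappa$-saturated for some $\kappa>\card{\mathcal{P}(W)}$; such ultrapowers exist. Since ultrapowers commute with reducts, the reduct of $\framme{M}$ to the signature of $\lang$-frames is an ultrapower of $\framme{F}$, so it is enough to exhibit a surjective bounded morphism onto $\Ce(\framme{F})$. The candidate is
\[
\pi\colon\framme{M}\to\Ce(\framme{F}),\qquad [g]_U\mapsto\{X\subseteq W\mid\{j\mid g(j)\in X\}\in U\}.
\]
By \L o\'s's theorem $\pi([g]_U)$ is an ultrafilter on $W$ and $\pi$ is well defined. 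Surjectivity is where saturation first enters: for an ultrafilter $u$ on $W$ the set of formulas $\{P_X(x)\mid X\in u\}$ is finitely satisfiable in $\framme{F}^\ast$ (finite intersections of members of $u$ are nonempty), hence realized in $\framme{M}$, and any realizing point maps onto $u$.

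It then remains to verify that $\pi$ respects each $R_i$. For the forth direction, if $([g_0],[g_1],\dots,[g_{k_i}])\in R_i^{\framme{M}}$ and $\nabla_i(X_1,\dots,X_{k_i})\in\pi([g_0])$ for subsets $X_1,\dots,X_{k_i}\subseteq W$, then unwinding the complex operation and applying \L o\'s's theorem shows that $U$-almost every index $j$ admits some $\ell$ with $g_\ell(j)\in X_\ell$; as there are only finitely many indices $\ell$, one of them works on a $U$-large set, whence $X_\ell\in\pi([g_\ell])$ and the dual condition \eqref{eq:dual} is verified. The back direction is the crux: given $\pi([g_0])\,R_i\,(v_1,\dots,v_{k_i})$ in $\Ce(\framme{F})$, I would realize over the parameter $[g_0]$ the type asserting $R_i([g_0],x_1,\dots,x_{k_i})$ together with $P_X(x_\ell)$ for all $X\in v_\ell$ and $\ell\le k_i$. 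Finite satisfiability reduces to showing that whenever $Y_\ell\in v_\ell$ (a finite intersection of members of $v_\ell$), the set $\{w\mid\exists\bfx\in R_i w\ \text{with}\ x_\ell\in Y_\ell\ \text{for all}\ \ell\}$ belongs to $\pi([g_0])$. Its complement equals $\nabla_i(\overline{Y_1},\dots,\overline{Y_{k_i}})$, and were this set in $\pi([g_0])$, the defining condition \eqref{eq:dual} would force $\overline{Y_\ell}\in v_\ell$ for some $\ell$, contradicting $Y_\ell\in v_\ell$. By saturation the full type is realized by points $[h_1],\dots,[h_{k_i}]$ with $([g_0],[h_1],\dots,[h_{k_i}])\in R_i^{\framme{M}}$ and $\pi([h_\ell])=v_\ell$, so $\pi$ is an onto bounded morphism.

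The main obstacle is this back clause, which is precisely where the saturation degree must be chosen correctly, namely $\kappa>\card{\mathcal{P}(W)}$ so that the $v_\ell$-types, of size at most $\card{\mathcal{P}(W)}$, are realized. The passage from unary to $k_i$-ary modalities is absorbed by the finite-index partition argument in the forth direction and by the duality $\nabla_i(\overline{Y_1},\dots,\overline{Y_{k_i}})=W\setminus\{w\mid\exists\bfx\in R_i w\ \text{with}\ x_\ell\in Y_\ell\ \text{for all}\ \ell\}$ in the back direction; with saturation in hand, the remaining $k$-ary bookkeeping is routine, and the argument is exactly the classical one adapted to polyadic relations.
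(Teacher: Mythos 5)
Your proof is correct and takes essentially the same route as the source the paper cites for this theorem (Goldblatt's original argument, which the paper itself reproduces when lifting the result to $\lang_n$-frames in Theorem \ref{thm:canoisult}): expand $\framme{F}$ with a unary predicate $P_X$ for every $X\subseteq W$, pass to a sufficiently saturated ultrapower, and show that the trace map $x\mapsto\{X\subseteq W\mid P_X(x)\}$ is a surjective bounded morphism onto $\Ce(\framme{F})$, with saturation delivering surjectivity and the back clause. The only deviation is that you require $\kappa$-saturation with $\kappa>\card{\mathcal{P}(W)}$ where the paper invokes an $\omega$-saturated ultrapower; since every type you realize has at most one parameter, that extra strength is not needed under the standard (parameter-counting) definition of saturation, but it is harmless and, if anything, safely covers the fact that the expanded language is uncountable.
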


\begin{thm}\label{thm:canoisult}
The canonical extension of an  $\lang_n$-frame $\framme{F}$ is an  $\lang_n$-bounded morphic image of an ultrapower of $\framme{F}$.
\end{thm}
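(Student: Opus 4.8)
The plan is to mirror the structure of the classical result (Theorem \ref{thm:canoisult00}) while tracking the extra predicates $r_m$. The key observation is that an ultrapower of an $\lang_n$-frame $\framme{F}=\struc{W,(r_m)_{m\preceq n},(R_i)_{i\in I}}$ over an ultrafilter $U$ on an index set $J$ is again an $\lang_n$-frame: the underlying $\lang$-frame of the ultrapower is the ultrapower of $\framme{F}_\sharp$, and one interprets each predicate $r_m$ in the ultrapower in the natural first-order way, namely $[x]\in r_m^{\prod_U\framme{F}}$ iff $\{j\mid x_j\in r_m^\framme{F}\}\in U$. Since conditions (2)--(4) of Definition \ref{def:enriched} are expressible by first-order sentences in the signature $\lang_n$ (the identities $r_n=W$, $r_m\cap r_q=r_{\gcd(m,q)}$, and the closure condition $R_iu\subseteq r_m^{k_i}$ for $u\in r_m$ are all first-order), \L o\'s's theorem guarantees that these properties transfer to any ultrapower. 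Hence $\prod_U\framme{F}$ is a genuine $\lang_n$-frame.

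Next I would invoke the already-established correspondence between frames and algebras. By Theorem \ref{thm:canoisult00} applied to $\framme{F}_\sharp$, the canonical extension $\Ce(\framme{F}_\sharp)$ is a bounded $\lang$-morphic image of some ultrapower $\prod_U\framme{F}_\sharp$. The plan is to realize this same surjection at the enriched level. Concretely, I would consider the ultrapower $\prod_U\framme{F}$ as an $\lang_n$-frame (with predicates as above), so that $(\prod_U\framme{F})_\sharp=\prod_U(\framme{F}_\sharp)$, and then show that the bounded morphism $\prod_U\framme{F}_\sharp\twoheadrightarrow\Ce(\framme{F}_\sharp)$ supplied by Theorem \ref{thm:canoisult00} can be identified with an $\lang_n$-bounded morphism $\prod_U\framme{F}\twoheadrightarrow\Ce(\framme{F})$ once we equip both sides with their $r_m$ structure. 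Here I would use Lemma \ref{lem:bgt}, which identifies $\Ce(\framme{F})_\sharp$ with $\Ce(\framme{F}_\sharp)$, so that the target $\lang$-frame is the correct reduct of the $\lang_n$-canonical extension.

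To verify that the surjection respects the predicates $r_m$, I would trace through the algebraic definitions. The canonical extension $\Ce(\framme{F})=(\framme{F}_\times)^\times$ has, by Definition \ref{defn:pano01}, its predicate $r_m$ given by $\MV(\framme{F}_\times,\lucas_m)$, and similarly the ultrapower's predicate is controlled by the truth values attained in each world. The matching of these structures should follow from the way the canonical algebra map and the ultrapower embedding interact, using Proposition \ref{prop:dsffg} to pass between algebra homomorphisms and $\lang_n$-bounded morphisms. In particular, an appropriate $\var{MMV}_n$-homomorphism between the complex algebra $\framme{F}_\times$ and the complex algebra of the ultrapower induces, via $(-)^\times$, the desired $\lang_n$-bounded morphism with the correct behavior on the $r_m$.

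The main obstacle I anticipate is the bookkeeping required to check that the predicate $r_m$ is preserved (not merely the underlying relations $R_i$). On the Boolean level, Theorem \ref{thm:canoisult00} already hands us the surjection, but one must confirm that a world in the canonical extension lying in $r_m=\MV(\framme{F}_\times,\lucas_m)$ is the image of a world of the ultrapower whose $m$-membership is witnessed by a $U$-large set of coordinates, and conversely that the morphism sends $r_m$ into $r_m$. The cleanest route is probably to argue entirely first-order: encode the $r_m$ as unary predicates, show both the ultrapower construction and the canonical-extension construction respect them via \L o\'s's theorem and Definition \ref{defn:pano01} respectively, and then reduce the preservation of $r_m$ under the surjection to the identity $f(r_m)\subseteq r_m'$ already packaged in Proposition \ref{prop:dsffg}.
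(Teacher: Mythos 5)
Your overall skeleton does match the paper's: handle the $\lang$-reduct by the Boolean result, then check the predicates $r_m$. Your opening observation, that an ultrapower of an $\lang_n$-frame is again an $\lang_n$-frame because conditions (2)--(4) of Definition \ref{def:enriched} are first-order and transfer by {\L}o\'s's theorem, is correct and is implicit in the paper. The gap is that the one genuinely new step --- proving $f(r_m^{\prod_U\framme{F}})\subseteq r_m^{\Ce(\framme{F})}=\MV(\framme{F}_\times,\lucas_m)$ --- is asserted rather than proved, and the tool you name for it does not apply. Proposition \ref{prop:dsffg} concerns only maps of the form $h^\times\colon \alg{A}'{}^\times\to\alg{A}^\times\colon u\mapsto u\circ h$ induced by an $\var{MMV}_n$-homomorphism $h\colon\alg{A}\to\alg{A}'$; the surjection handed to you by Theorem \ref{thm:canoisult00} has an ultrapower as its domain, not the canonical frame of an algebra, so that proposition says nothing about it. If you instead follow your algebraic suggestion literally --- take the natural embedding $h\colon\framme{F}_\times\to(\prod_U\framme{F})_\times$ and apply $(-)^\times$ --- you obtain an onto $\lang_n$-bounded morphism $\Ce(\prod_U\framme{F})\twoheadrightarrow\Ce(\framme{F})$, whose domain is the canonical extension of the ultrapower rather than the ultrapower itself; and the map $\prod_U\framme{F}\to\Ce(\prod_U\framme{F})$ of Proposition \ref{prop:canonns} is an embedding, not an onto bounded morphism, so there is nothing to compose with. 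The surjectivity and the back condition come from saturation, and cannot be traded away for algebra.

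The missing argument requires opening up the proof of Theorem \ref{thm:canoisult00} rather than citing it. One takes an $\omega$-saturated ultrapower $\framme{F}_\omega$ of $\framme{F}$ in the language of $\lang_n$-frames expanded by predicates $P_X$ for every $X\subseteq W$, and the map is $x\mapsto u_x$, where $u_x$ is the unique element of $\MV(\framme{F}_\times,\lucas_n)$ whose restriction to $\idempo{\framme{F}_\times}=2^W$ is the ultrafilter $F_x=\{X\subseteq W\mid \framme{F}_\omega,x\models P_X\}$ (Lemma \ref{lem:ba}). The issue your ``bookkeeping'' glosses over is that membership of $u_x$ in $r_m^{\Ce(\framme{F})}$ constrains the values of $u_x$ at \emph{every} $\alpha\in\framme{F}_\times$, including non-idempotent ones, whereas $u_x$ is defined only through its Boolean trace $F_x$. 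The paper bridges this with an ingredient absent from your proposal: an MV-term $I_m$ whose interpretation on $\lucas_n$ is the characteristic function of $\lucas_m$ (a consequence of the McNaughton theorem). Because $\framme{F}_\times$ is the \emph{tight} complex algebra, every $\alpha\in\framme{F}_\times$ takes values in $\lucas_{s_u}\subseteq\lucas_m$ at each world $u\in r_m^{\framme{F}}$, so the first-order sentence $\forall v\,(r_m(v)\Rightarrow P_{X_{\alpha,m}}(v))$, with $X_{\alpha,m}=\{y\mid I_m(\alpha)(y)=1\}$, holds in the expanded structure and hence, by elementarity, in $\framme{F}_\omega$; unwinding the definition of $u_x$ then yields $u_x(I_m(\alpha))=1$, i.e.\ $u_x(\alpha)\in\lucas_m$. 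This interplay between the term $I_m$, the tightness of $\framme{F}_\times$, and the predicates $P_X$ is the real content of the theorem, and it is not recoverable from {\L}o\'s's theorem plus Proposition \ref{prop:dsffg}.
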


\begin{proof}
We adapt the proof of the corresponding result for the class of $\lang$-frames given in \cite[Theorem 3.6.1]{Goldblatt1989}. Denote by $\lang_{\framme{F}}$ the language $\lang \cup \{P_X \mid X \subseteq W\}$ where $W$ is the universe of $\framme{F}$ and $P_X$ is a unary predicate for every $X \subseteq W$. We turn $\framme{F}$ into an $\lang_{\framme{F}}$-structure $\framme{F}'$ by setting $\framme{F}', w \models P_X$ if $w \in X$, for any $w \in W$ and $X \subseteq W$. Theorem 6.1.8 in \cite{Chang1990} provides an $\omega$-saturated ultrapower $\framme{F}_\omega$ of $\framme{F}'$. We prove that
\begin{equation}\label{eqn:ultra}
\Ce(\framme{F}) \text{ is an } \lucas_n \text{-valued bounded morphic image of the } \lang \text{-reduct of } \framme{F}_\omega.
\end{equation}
It is shown in \cite[Theorem 3.6.1]{Goldblatt1989} that for every element $x$ of $\framme{F}_\omega$ the set $F_x=\{X \subseteq W \mid \framme{F}_\omega, x \models P_X\}$ is an ultrafilter of $2^W=\idempo{\framme{F}_{\times}}$. Thus, for every $x \in \framme{F}_\omega$, there is a unique $u_x\in \MV(\framme{F}_\times, \lucas_n)$ which satisfies $F_x=u_x^{-1}(1) \cap \idempo{\framme{F}_{\times}}$. To obtain \eqref{eqn:ultra}, we prove that the map $f\colon \framme{F}_\omega \to \Ce(\framme{F})\colon x \mapsto u_x$ is an $\lang_n$-bounded morphism. It is shown in \cite[Theorem 3.6.1]{Goldblatt1989} that $f$ is a bounded morphism from $(\framme{F}_\omega)_\sharp$ onto $\Ce(\framme{F})_\sharp\cong\Ce(\framme{F}_\sharp)$. It remains to prove that $f(r_m^{\framme{F}_\omega}) \subseteq r_m^{\Ce(\framme{F})}$ for every $m \preceq{n}$. Let $x\in r_m^{\framme{F}_\omega}$ and $\alpha \in \framme{F}_\times$. We have to prove that $u_x(\alpha) \in \lucas_m$, or equivalently that $u_x(I_{m}(\alpha))=1$ where $I_m$ is an MV-term whose interpretation on $\lucas_n$ is valued in $\{0,1\}$ and satisfies
 \[
I_m^{\lucas_n}(a)=1 \iff a \in \lucas_m,
\]
 for every $a\in \lucas_n$  (the existence of such a term is a consequence of the McNaughton Theorem \cite{McNaughton1951}). If $X_{\alpha, m}$ denotes the set $\{y \in W \mid I_m(\alpha)(y)=1\}$ then $\framme{F}' \models \forall v (v \in r_m \Rightarrow P_{X_{\alpha, m}}(v))$ by definition of $\framme{F}_\times$, from which we deduce $\framme{F}_\omega \models \forall v (v \in r_m \Rightarrow P_{X_{\alpha, m}}(v))$ since $\framme{F}_\omega$ is an elementary extension of $\framme{F}'$. It follows by definition of $u_x$ that $u_x(I_m(\alpha))=1$, and we have proved \eqref{eqn:ultra}. 
\end{proof}

\begin{rem}\label{rem:bir}
Recall that, thanks to the \textsc{Birkhoff}  theorem on varieties, if  $K\cup \{\alg{A}\}$ is a class of algebras of the same type, then $\alg{A}$ belongs to the equational class defined by the equational theory of $K$ if and only if $\alg{A}\in \classop{HSP}(K)$. 
\end{rem}

We have gathered the tools needed to obtain the $\lucas_n$-valued versions of the \textsc{Goldblatt}-\textsc{Thomason} theorem.

\begin{thm}\label{thm:main01}
Assume that $\mathcal{C}$ is a class of  $\lang_n$-frames that contains  ultrapowers of its elements. Then $\mathcal{C}$ is definable if and only if the following two conditions are satisfied.
\begin{enumerate}
\item\label{it:ljd01} The class $\mathcal{C}$ contains  $\lang_n$-generated subframes, disjoint unions and  $\lang_n$-bounded morphic images of its elements.
\item\label{it:ljd02} For any  $\lang_n$-frame $\framme{F}$, if $\Ce(\framme{F}) \in \mathcal{C}$ then $\framme{F}\in \mathcal{C}$. 
\end{enumerate}
\end{thm}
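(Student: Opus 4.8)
The plan is to prove both implications under the standing assumption that $\mathcal{C}$ is closed under ultrapowers. For the forward direction, suppose $\mathcal{C}=\Mod(\Phi)$ for some $\Phi\subseteq\Form_\lang$. Closure under $\lang_n$-generated subframes, disjoint unions and $\lang_n$-bounded morphic images (condition (\ref{it:ljd01})) is then immediate from Proposition \ref{prop:exo}: each of these operations preserves the validity of every $\phi\in\Phi$, hence membership in $\mathcal{C}$. For condition (\ref{it:ljd02}) I would invoke Corollary \ref{cor:canon02} (\ref{it:ldsg01}): if $\Ce(\framme{F})\in\mathcal{C}$ then $\Ce(\framme{F})\models\phi$ for every $\phi\in\Phi$, whence $\framme{F}\models\phi$ for every such $\phi$, so $\framme{F}\in\Mod(\Phi)=\mathcal{C}$.

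The substance of the argument is the converse direction. Assuming (\ref{it:ljd01}) and (\ref{it:ljd02}), I would take $\Phi$ to be the modal theory of $\mathcal{C}$, that is, $\Phi=\{\phi\in\Form_\lang\mid \framme{G}\models\phi \text{ for all } \framme{G}\in\mathcal{C}\}$. The inclusion $\mathcal{C}\subseteq\Mod(\Phi)$ is trivial, so it suffices to show that $\framme{F}\models\Phi$ implies $\framme{F}\in\mathcal{C}$. Fix such an $\framme{F}$ and pass to the algebraic side via the tight complex algebra $\framme{F}_\times\in\var{MMV}_n^\lang$ (Proposition \ref{prop:algtran}). The crucial observation is that in $\var{MMV}_n^\lang$ every equation $s=t$ is equivalent to one of the form $\psi=1$ for a suitable $\psi\in\Form_\lang$ (take $\psi$ to be the MV-biconditional of $s$ and $t$). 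Consequently the equational theory of the class $K=\{\framme{G}_\times\mid\framme{G}\in\mathcal{C}\}$ matches exactly the set $\Phi$: an equation holds throughout $K$ if and only if its associated formula lies in $\Phi$. Since $\framme{F}\models\Phi$ gives $\framme{F}_\times\models\phi=1$ for every $\phi\in\Phi$, the algebra $\framme{F}_\times$ satisfies the whole equational theory of $K$, so by Birkhoff's theorem (Remark \ref{rem:bir}) we obtain $\framme{F}_\times\in\classop{HSP}(K)$.

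It then remains to transport this containment back to frames through the duality functors. Writing the membership explicitly, there is an algebra $\alg{B}\in\classop{SP}(K)$ with a surjection $h\colon\alg{B}\twoheadrightarrow\framme{F}_\times$ and an embedding $\alg{B}\hookrightarrow\prod_j\framme{G}_{j,\times}$ where each $\framme{G}_j\in\mathcal{C}$. By Proposition \ref{prop:sum} the product is isomorphic to $\framme{H}_\times$ for $\framme{H}=\disjun_j\framme{G}_j$, and $\framme{H}\in\mathcal{C}$ by closure under disjoint unions. Applying $(-)^\times$ and Proposition \ref{prop:dsffg}: the embedding $\alg{B}\hookrightarrow\framme{H}_\times$ dualizes to a surjective $\lang_n$-bounded morphism $\Ce(\framme{H})=(\framme{H}_\times)^\times\twoheadrightarrow\alg{B}^\times$, while the surjection $h$ dualizes to an injective $\lang_n$-bounded morphism $\Ce(\framme{F})=(\framme{F}_\times)^\times\to\alg{B}^\times$, identifying $\Ce(\framme{F})$ with an $\lang_n$-generated subframe of $\alg{B}^\times$. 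Now I would use Theorem \ref{thm:canoisult} together with the ultrapower hypothesis: $\Ce(\framme{H})$ is an $\lang_n$-bounded morphic image of an ultrapower of $\framme{H}\in\mathcal{C}$, so $\Ce(\framme{H})\in\mathcal{C}$ by (\ref{it:ljd01}). Closure under $\lang_n$-bounded morphic images then yields $\alg{B}^\times\in\mathcal{C}$, and closure under $\lang_n$-generated subframes gives $\Ce(\framme{F})\in\mathcal{C}$. Finally condition (\ref{it:ljd02}) delivers $\framme{F}\in\mathcal{C}$, completing the proof.

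The hard part will be assembling the chain of functorial correspondences correctly, in particular making sure that each algebraic operation in $\classop{HSP}$ is matched with the right frame construction under $(-)^\times$ and that the direction of each bounded morphism is the one needed: surjections of algebras dualizing to injective bounded morphisms and hence to generated subframes, embeddings dualizing to surjective bounded morphisms and hence to bounded morphic images. The other delicate point is verifying that the equational theory of $K$ really reduces to the modal theory $\Phi$, which rests on the $\var{MMV}_n^\lang$-equivalence of arbitrary equations with formulas of the form $\psi=1$; everything else is bookkeeping of the closure conditions already recorded in Propositions \ref{prop:exo}, \ref{prop:sum} and \ref{prop:dsffg}.
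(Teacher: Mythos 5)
Your proof is correct and follows essentially the same route as the paper's: necessity via Proposition \ref{prop:exo} and Corollary \ref{cor:canon02}, and sufficiency by taking the modal theory of $\mathcal{C}$, invoking Birkhoff's theorem (Remark \ref{rem:bir}), dualizing the $\classop{HSP}$ decomposition through Propositions \ref{prop:sum} and \ref{prop:dsffg}, and using Theorem \ref{thm:canoisult} with the closure conditions to conclude. Your explicit reduction of arbitrary equations to equations of the form $\psi=1$ via the MV-biconditional is a detail the paper leaves implicit, but it is exactly what its appeal to Proposition \ref{prop:algtran} requires.
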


\begin{thm}\label{thm:main02}
Assume that $\mathcal{C}$ is a class of $\lang$-frames that contains  ultrapowers of its elements. Then $\mathcal{C}$ is $\lucas_n$-definable if and only if the following two conditions are satisfied.
\begin{enumerate}
\item\label{it:ljd03} The class $\mathcal{C}$ contains generated subframes, disjoint unions and bounded morphic images of its elements.
\item\label{it:ljd04} For any $\lang$-frame $\framme{F}$, if $\Ce(\framme{F}) \in \mathcal{C}$ then $\framme{F}\in \mathcal{C}$. 
\end{enumerate}
\end{thm}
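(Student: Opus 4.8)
The plan is to follow the classical Goldblatt--Thomason argument in its algebraic form, transferring everything through the complex-algebra functor $(-)_{+_n}$ and the dual functor $(-)^\times$. The forward direction (necessity) is the easy half: if $\class{C}=\Mod_n(\Phi)$, then by Proposition~\ref{prop:exo} the class $\class{C}$ is closed under generated subframes, disjoint unions, and bounded morphic images, which is condition~(\ref{it:ljd03}); and closure under canonical extensions in the sense of~(\ref{it:ljd04}) is exactly Corollary~\ref{cor:canon02}~(\ref{it:ldsg02}), since $\Ce(\framme{F})\models_n\phi$ forces $\framme{F}\models_n\phi$ for each $\phi\in\Phi$. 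So the whole content is the converse.

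For sufficiency, I would assume $\class{C}$ satisfies~(\ref{it:ljd03}), (\ref{it:ljd04}), and is closed under ultrapowers, and let $\Phi$ be the set of all $\phi\in\Form_\lang$ with $\class{C}\models_n\phi$; the goal is $\class{C}=\Mod_n(\Phi)$. The inclusion $\class{C}\subseteq\Mod_n(\Phi)$ is immediate. For the reverse inclusion, take $\framme{F}\in\Mod_n(\Phi)$; I want $\framme{F}\in\class{C}$. Translate to algebra via Proposition~\ref{prop:algtran}~(\ref{it:odfr02}): the equations valid on all complex algebras $\{\framme{G}_{+_n}\mid\framme{G}\in\class{C}\}$ are precisely the $\phi=1$ for $\phi\in\Phi$, and $\framme{F}\in\Mod_n(\Phi)$ means $\framme{F}_{+_n}$ satisfies exactly these equations. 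By the Birkhoff theorem (Remark~\ref{rem:bir}), $\framme{F}_{+_n}\in\classop{HSP}(\{\framme{G}_{+_n}\mid\framme{G}\in\class{C}\})$, so there is a family $(\framme{G}_j)_{j\in J}$ in $\class{C}$ with a homomorphism onto $\framme{F}_{+_n}$ from a subalgebra of $\prod_{j}(\framme{G}_j)_{+_n}$.

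Now I would dualize this $\classop{HSP}$-decomposition step by step, using Proposition~\ref{prop:sum} to turn the product $\prod_j(\framme{G}_j)_{+_n}$ into the complex algebra $(\disjun_j\framme{G}_j)_{+_n}$ of a disjoint union, and Proposition~\ref{prop:dsffg} to convert the subalgebra inclusion and the surjection into a generated subframe and a bounded morphic image at the level of dual frames. Concretely: applying $(-)^+$ (equivalently $(-)^\times$ on the associated trivial $\lang_n$-frames) to the surjection $\alg{B}\twoheadrightarrow\framme{F}_{+_n}$ yields a bounded morphism $\framme{F}^{++}\rightarrowtail\alg{B}^+$, while the inclusion $\alg{B}\hookrightarrow(\disjun_j\framme{G}_j)_{+_n}$ yields a surjection $(\disjun_j\framme{G}_j)^{++}\twoheadrightarrow\alg{B}^+$. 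Tracking these through the closure hypotheses and using that $\framme{F}^{++}$ is the canonical extension $\Ce(\framme{F})$ places $\Ce(\framme{F})$ inside $\class{C}$; then hypothesis~(\ref{it:ljd04}) delivers $\framme{F}\in\class{C}$. The main obstacle, and the place where ultrapower-closure and Theorem~\ref{thm:canoisult} (via its $\lang$-frame analogue Theorem~\ref{thm:canoisult00}) are indispensable, is ensuring that the frames produced by dualizing $\classop{HSP}$ actually land in $\class{C}$: the disjoint union $\disjun_j\framme{G}_j$ is in $\class{C}$ by~(\ref{it:ljd03}), but its bidual $(\disjun_j\framme{G}_j)^{++}=\Ce(\disjun_j\framme{G}_j)$ is a priori only a bounded morphic image of an \emph{ultrapower} of the disjoint union, so it is precisely closure under ultrapowers and bounded morphic images that keeps it within $\class{C}$. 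Once this is secured, the remaining surjection and generated-subframe steps stay in $\class{C}$ by~(\ref{it:ljd03}), and the argument closes.
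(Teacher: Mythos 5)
Your proof is correct, but it is not the proof the paper gives for this theorem. The paper obtains Theorem \ref{thm:main02} as a short corollary of Theorem \ref{thm:main01}: it lifts $\class{C}$ to the class $\class{C}'=\{\framme{F}\in \cate{FR}_\lang^n \mid \framme{F}_\sharp \in \class{C}\}$ of all $\lang_n$-frames whose underlying $\lang$-frame lies in $\class{C}$, checks that $\class{C}'$ inherits closure under ultrapowers, $\lang_n$-generated subframes, disjoint unions, $\lang_n$-bounded morphic images, and reflection of canonical extensions (the last via Lemma \ref{lem:bgt}), concludes $\class{C}'=\Mod(\Lambda)$ from Theorem \ref{thm:main01}, and then pulls this back to $\class{C}=\Mod_n(\Lambda)$ by Proposition \ref{prop:boh}~(\ref{it:biz01}). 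You instead re-run the whole algebraic \textsc{Goldblatt}~-~\textsc{Thomason} argument directly at the level of $\lang$-frames: Birkhoff's theorem applied to $\framme{F}_{+_n}$ and $\{\framme{G}_{+_n}\mid \framme{G}\in\class{C}\}$, dualization of the $\classop{HSP}$-decomposition through Propositions \ref{prop:sum} and \ref{prop:dsffg}, identification of $(\framme{F}_{+_n})^+$ with $\Ce(\framme{F})$ via Lemma \ref{lem:ba} and Definition \ref{defn:cano02}, and Theorem \ref{thm:canoisult00} to keep $\Ce\big(\disjun_j\framme{G}_j\big)$ inside $\class{C}$. This is essentially the paper's proof of Theorem \ref{thm:main01} transplanted to $\lang$-frames (and, judging from the acknowledgements, close to the pre-referee version of the proof). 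What each route buys: the paper's reduction avoids duplicating the Birkhoff/duality machinery but silently delegates the verification that $\class{C}'$ satisfies the hypotheses of Theorem \ref{thm:main01} (e.g.\ that reducts of ultrapowers of $\lang_n$-frames are ultrapowers of the reducts, and that reflection of $\Ce$ transfers via $\Ce(\framme{G})_\sharp\cong\Ce(\framme{G}_\sharp)$); your argument is longer but self-contained and never needs $\lang_n$-frames, Proposition \ref{prop:boh}, or Theorem \ref{thm:canoisult} at all. One small imprecision: the equations valid on $\{\framme{G}_{+_n}\mid\framme{G}\in\class{C}\}$ are not literally only those of the form $\phi=1$; rather, every equation $s=t$ is equivalent over $\var{MV}_n$ to one of that form (via the biconditional term), which is what legitimizes the appeal to Remark \ref{rem:bir} --- the paper is equally terse on this point in its proof of Theorem \ref{thm:main01}, so this is a shared, harmless ellipsis rather than a gap.
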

\begin{proof}[Proof of Theorem \ref{thm:main01}] 
%The proofs of  Theorem \ref{thm:main01} and Theorem \ref{thm:main02} follow from the following argument, where $\star=\times$ and $\star=+$, respectively.

Necessity follows from Proposition \ref{prop:exo} and Corollary \ref{cor:canon02}. For sufficiency, suppose that $\mathcal{C}$ is a class  of  $\lang_n$-frames that satisfies conditions (\ref{it:ljd01}) and (\ref{it:ljd02}) of Theorem \ref{thm:main01}. Let $\Lambda_{\mathcal{C}}$  be the set of $\lang$-formulas defined as
\[
\Lambda_{\mathcal{C}} =\bigcap_{\framme{F}\in \mathcal{C}}\{\phi  \in \Form_\lang \mid \framme{F} \models \phi \}.
\]
We prove that $\mathcal{C}=\Mod(\Lambda_\mathcal{C})$.
We have $\mathcal{C} \subseteq\Mod(\Lambda_\mathcal{C})$ by definition of $\Lambda_\mathcal{C}$. For the other inclusion, let $\framme{F}\in\Mod(\Lambda_\mathcal{C})$. From Proposition \ref{prop:algtran} (\ref{it:odfr02}), it follows that $\framme{F}_\times$ satisfies every equation that is satisfied by every member of $\mathcal{C}_\times=\{\framme{F}_\times \mid \framme{F}\in \mathcal{C}\}$. We deduce from Remark \ref{rem:bir} that $\framme{F}_\times\in \classop{HSP}(\mathcal{C}_\times)$, and there exist a family $ \{\framme{F}_i\mid i \in I\}$ of elements of $\mathcal{C}$ and a subalgebra $\alg{A}$ of $\prod_{i\in I} \framme{F}_{i\times}$ such that $\framme{F}_\times$ is a homomorphic image of $\alg{A}$. By considering the canonical structures associated to these algebras, we obtain  by  Proposition \ref{prop:dsffg} that
\[
\big(\big(\disjun_{i \in I} \framme{F}_{i}\big)_\times\big)^\times \twoheadrightarrow \alg{A}^\times \hookleftarrow (\framme{F}_\times)^\times.
\]
%where $\bullet = +$ if $\star=+_n$ and $\bullet = \times$ if $\star=\times$. 
From our assumptions on $\mathcal{C}$ and Theorem \ref{thm:canoisult}, we obtain that $\big(\big(\disjun_{i \in I} \framme{F}_{i}\big)_\times\big)^\times$ belongs to $\mathcal{C}$, thus so do $\alg{A}^\times$ and $(\framme{F}_\times)^\times$. We conclude that $\framme{F}\in \mathcal{C}$ using assumption (\ref{it:ljd04}) since $(\framme{F}_\times)^\times=\Ce(\framme{F})$.  
\end{proof}

\begin{proof}[Proof of Theorem \ref{thm:main02}] Necessity follows from Proposition  \ref{prop:exo} and Corollary \ref{cor:canon02}. We obtain sufficiency as  a corollary of Theorem \ref{thm:main01}. Let $\mathcal{C}$ be a class of $\lang$-frames that contains ultrapowers of is elements and that satisfies conditions (\ref{it:ljd03}) and (\ref{it:ljd04}) of Theorem \ref{thm:main02}. Then the class $\mathcal{C'}=\{\framme{F}\in \class{FR}_\lang^n \mid \framme{F}_\sharp \in \mathcal{C}\}$ contains ultrapowers of its elements and satisfies conditions (\ref{it:ljd01}) and (\ref{it:ljd02}) of Theorem \ref{thm:main01}. It follows that there is a set $\Lambda$ of $\lang$-formulas such that $\mathcal{C'}=\Mod(\Lambda)$. We obtain that $\mathcal{C}=\Mod_n(\Lambda)$ by Proposition \ref{prop:boh} (\ref{it:biz01}).
\end{proof}

Note that by definition of the frame constructions,  Theorem  \ref{thm:main02} can be equivalently restated as follows.
\begin{cor}\label{cor:godequiv}
Assume that $\mathcal{C}$ is a class of $\lang$-frames that contains  ultrapowers of its elements. Then $\mathcal{C}$ is $\lucas_n$-definable if and only it is $\lucas_1$-definable.
\end{cor}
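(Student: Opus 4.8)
The plan is to read off the desired equivalence directly from the characterisation provided by Theorem~\ref{thm:main02}, by observing that the two conditions appearing there are independent of the integer $n$. Concretely, I would apply Theorem~\ref{thm:main02} twice to the very same class $\mathcal{C}$: once with the fixed parameter $n$ and once with the parameter $1$. Since $\mathcal{C}$ contains ultrapowers of its elements, each application yields that $\lucas_n$-definability (respectively $\lucas_1$-definability) of $\mathcal{C}$ is equivalent to the conjunction of conditions~(\ref{it:ljd03}) and~(\ref{it:ljd04}). If these conditions are literally the same for both values of the parameter, the two notions of definability coincide and the corollary follows at once.

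The crux is therefore to verify that conditions~(\ref{it:ljd03}) and~(\ref{it:ljd04}) do not depend on $n$. Condition~(\ref{it:ljd03}) mentions only generated subframes, disjoint unions, and bounded morphic images of $\lang$-frames; all three constructions are defined purely at the level of $\lang$-frames and involve no reference to the set of truth values, hence none to $n$. For condition~(\ref{it:ljd04}), the only potential source of $n$-dependence is the canonical extension $\Ce(\framme{F})$. Although Definition~\ref{defn:cano02} writes $\Ce(\framme{F})=(\idempo{\framme{F}_{+_n}})^+$ with an explicit occurrence of $n$, the remark immediately following that definition records the isomorphism $\idempo{\framme{F}_{+_n}}\cong\idempo{\framme{F}_{+_1}}$, valid for every $\lang$-frame $\framme{F}$. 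Consequently the canonical extension computed with parameter $n$ is isomorphic to the one computed with parameter $1$, so that the membership requirement ``$\Ce(\framme{F})\in\mathcal{C}$'' is insensitive to the value of $n$ (here one uses tacitly that $\mathcal{C}$, being a class of structures, is closed under isomorphism).

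With these observations in place, conditions~(\ref{it:ljd03}) and~(\ref{it:ljd04}) are one and the same for the parameters $n$ and $1$, and the chain of equivalences closes: $\mathcal{C}$ is $\lucas_n$-definable iff~(\ref{it:ljd03}) and~(\ref{it:ljd04}) hold iff $\mathcal{C}$ is $\lucas_1$-definable. I expect the main (and really the only) delicate point to be the dissolution of the apparent $n$-dependence of $\Ce(\framme{F})$ in Definition~\ref{defn:cano02}; once the isomorphism $\idempo{\framme{F}_{+_n}}\cong\idempo{\framme{F}_{+_1}}$ is invoked, everything else is a formal matter of quoting Theorem~\ref{thm:main02} for two parameters. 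As a sanity check, the easy implication (any $\lucas_1$-definable class is $\lucas_n$-definable) is already subsumed by Proposition~\ref{prop:boh0} without any ultrapower hypothesis, so the genuine new content delivered by this argument is the converse direction, which is precisely where the \textsc{Goldblatt}--\textsc{Thomason} characterisation is needed.
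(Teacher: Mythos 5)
Your proposal is correct and takes essentially the same route as the paper, which presents Corollary \ref{cor:godequiv} simply as an equivalent restatement of Theorem \ref{thm:main02} on the grounds that conditions (\ref{it:ljd03}) and (\ref{it:ljd04}) do not depend on $n$. Your extra care in dissolving the apparent $n$-dependence of $\Ce(\framme{F})$ via the isomorphism $\idempo{\framme{F}_{+_n}}\cong\idempo{\framme{F}_{+_1}}$ (with closure of $\mathcal{C}$ under isomorphism supplied either by definability or by condition (\ref{it:ljd03})) just makes explicit what the paper compresses into the phrase ``by definition of the frame constructions''.
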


\section{Conclusions and further research}\label{sec:last}
The results obtained in this paper clarify some links between the standard  notion of modal definability and two of its generalizations based on \L ukasiewicz $(n+1)$-valued logic. We conclude the paper by presenting some final remarks and topics for further research.

\begin{enumerate}[(I)]
\item\label{it:deci} Theorem \ref{thm:main02} completely deciphers the links between standard  modal definability and $\lucas_n$-valued definability for elementary classes of $\lang$-frames. Indeed, as a corollary of Theorem \ref{thm:main02} and the \textsc{Goldblatt}~-~\textsc{Thomason} theorem \cite{Goldlblatt1975}, we obtain that those elementary classes of $\lang$-frames that are $\lucas_n$-definable are exactly the ones that are modally definable. Deciphering these links in the non-elementary case is a topic of interest, and Propositions \ref{prop:boh0} and \ref{prop:boh} can be considered as modest steps towards some solution to this problem. In particular, it is an open problem to find a non-elementary class of $\lang$-frames which is $\lucas_n$-definable without being $\lucas_1$-definable.

\item\label{it:poly}  As pointed out by the referee, Boolean polyadic modal logics can be simulated by monadic ones \cite{Goguadze2003}. This simulation preserves and reflects  definability of frames. It may be the case that these simulation results extend to the many-valued framework. In that case, it would be enough to restrict to monadic languages $\lang'$ in order to obtain results about modal definability for classes of $\lang$-frames, where $\lang$ is an arbitrary polyadic modal language.  

\item The validity relations considered in this paper to define the notions of definability and $\lucas_n$-definability are based on models that evaluate formulas in a finite subalgebra of the standard MV-algebra $[0,1]$. Finding the right tools to generalize our results to a notion of definability based on a validity relation defined with $[0,1]$-valued models is a difficult task that would probably require new appropriate representation results for the variety of MV-algebras.

\item Similarly, studying modal definability for classes of relational structures in which the relations are many-valued is an important topic of further research. 

\item The validity relation $\models$ is obtained from $\models_n$ by restricting the set of possible valuations that can be added to an $\lang$-frame to turn it into a model. This paper illustrates the links that exist between these two validity relations. It would be interesting to develop tools to study (modal) definability in general situations involving  a validity relation which is a weakening of another one. 

\item Informally, the validity relation $\models$ defined in section \ref{sec:02} permit talking about the set of possible truth values in worlds of  $\lang_n$-frames. This gain of expressive power could turn out to be interesting for application-oriented modal extensions of many-valued logics such as the many-valued generalization of Propositional Dynamic Logic developed in \cite{Teheux2014}. 
\end{enumerate}
 
\section*{Acknowledgements}
We would like to thank the anonymous reviewer for his suggestions that helped to improve the readability of the paper and to shorten the proof of Theorem \ref{thm:main02}.

\appendix 
\section{Truth Lemma for $k$-ary modal operators}
In this appendix, we give the proof of Proposition \ref{lem:truth} for $k$-ary modalities. The case of a unary modality was given in \cite[Proposition 5.6]{Teheux2012}, but the general case has not been published previously.

%\begin{defn}
%Let $\alg{A} \in \var{MMV}_n^\lang$. If $\alpha_\cdot\colon \Form_\lang \to \alg{A}$ is an $\lang$-homomorphism, we say that $\struc{\alg{A}, \alpha_\cdot}$ is an \emph{algebraic model} (based on $\alg{A}$). The \emph{canoncial model} of $\struc{\alg{A}, \alpha_\cdot}$ is the model $\struc{\alg{A}^+, \Val}$ defined by 
%$\Val(u,p)=u(\alpha_p)$ for every $p \in \Prop$ and $u \in \alg{A}^+$.
%\end{defn}

We need to recall the following definitions and results from \cite{Bell1996}. Let $\alg{L}$ be a bounded distributive lattice and $k\geq 1$. Given $\bfx, \bfy \in \alg{L}^k$ we write $\bfx \approx \bfy$ if there is an $\ell\leq k$ such that $x_j=y_j$ for every $j\leq k$ with $j\neq \ell$. We write $\bfx | \bfy$ if there is a $\ell \leq k$ such that $x_\ell=y_\ell$. Moreover, if $F_j$ is a subset of $\alg{L}$ for every $j\leq k$, we let $F_1+ \cdots + F_k=\{\bfx \in \alg{L}^k \mid  \exists \ell \leq k \text{ such that }  x_\ell\in F_\ell\}$.

\begin{defn}[{\cite{Bell1996}}]
A subset $F$ of $\alg{L}^k$ is called a $\alg{L}^k$-\emph{filtroid} if it is increasing, contains $\{\bfone\}+\cdots + \{\bfone\}$, and contains $\bfx \wedge \bfy$ for every $\bfx,\bfy \in F$ such that $\bfx\approx\bfy$. A proper $\alg{L}^k$-filtroid $F$   is \emph{prime} if there are some prime filters $F_1, \ldots, F_k$ of $\alg{L}$ such that $F=F_1+\cdots +F_k$.
\end{defn}

For instance, for any $\alg{A}\in \var{MMV}_n^\lang$, any $u\in \alg{A}^+$, and any  $k$-ary modality $\nabla$,  the set \[\nabla^{-1}(u^{-1}(1)) \cap \idempo{\alg{A}}^k=\{\bfa \in \idempo{\alg{A}}^k \mid u(\nabla (\bfa))=1\}\] is a $\idempo{\alg{A}}^k$-filtroid.  

\begin{thm}[{\cite[Theorem 1.2]{Bell1996}}]\label{thm:prime} A proper $\alg{L}^k$-filtroid is the intersection of the prime $\alg{L}^k$-filtroids that contain it.
\end{thm}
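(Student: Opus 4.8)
The plan is to reduce the statement to a separation property and then establish that property by a Zorn's lemma argument carried out on tuples of ideals of $\alg{L}$, extracting primeness of the components from the distributive law together with the restricted meet-closure axiom of filtroids.

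First I would observe that the asserted equality is equivalent to the following separation statement: for every proper $\alg{L}^k$-filtroid $F$ and every $\bfa\in\alg{L}^k$ with $\bfa\notin F$, there is a prime $\alg{L}^k$-filtroid $G$ with $F\subseteq G$ and $\bfa\notin G$. Indeed, $F$ is trivially contained in the intersection of the prime filtroids above it, and each $\bfa\notin F$ is removed by such a separating $G$; since $F$ is proper and increasing we have $\mathbf{0}\notin F$, so the relevant family is nonempty. Now a prime filtroid $F_1+\cdots+F_k$ excludes $\bfa$ exactly when $a_\ell\notin F_\ell$ for every $\ell$, i.e. when $a_\ell$ lies in the prime ideal $I_\ell=\alg{L}\setminus F_\ell$ for every $\ell$. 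Thus the whole problem becomes: produce proper prime ideals $I_1,\ldots,I_k$ of $\alg{L}$ with $a_\ell\in I_\ell$ for each $\ell$ and with $(I_1\times\cdots\times I_k)\cap F=\varnothing$.

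To obtain these ideals I would apply Zorn's lemma to the set $\mathcal{S}$ of all tuples $(I_1,\ldots,I_k)$ of ideals of $\alg{L}$ such that $a_\ell\in I_\ell$ for every $\ell$ and $(I_1\times\cdots\times I_k)\cap F=\varnothing$, ordered componentwise by inclusion. The tuple of principal ideals $(\{z\mid z\leq a_1\},\ldots,\{z\mid z\leq a_k\})$ belongs to $\mathcal{S}$: were some $\bfx\in F$ below $\bfa$ coordinatewise, increasingness of $F$ would force $\bfa\in F$. Componentwise unions along a chain in $\mathcal{S}$ remain in $\mathcal{S}$ (a witness $\bfx\in F$ in the product of the unions would, by finiteness of the set of coordinates, already lie in one member of the chain), so a maximal element $(I_1,\ldots,I_k)$ exists. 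Properness of each $I_\ell$ is automatic: the tuple with $\bfone$ in coordinate $\ell$ and $0$ elsewhere lies in $\{\bfone\}+\cdots+\{\bfone\}\subseteq F$, so $1\in I_\ell$ would place it in $(I_1\times\cdots\times I_k)\cap F$.

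The crux, and the step I expect to be the main obstacle, is showing that each $I_\ell$ is prime; this is where the $\approx$-meet axiom and the distributivity of $\alg{L}$ interact. Suppose $b\wedge c\in I_\ell$ while $b,c\notin I_\ell$. Maximality of $(I_1,\ldots,I_k)$, applied to the ideals generated by $I_\ell\cup\{b\}$ and by $I_\ell\cup\{c\}$, yields tuples $\bfx,\bfy\in F$ with $x_m,y_m\in I_m$ for $m\neq\ell$ and with $x_\ell\leq i\vee b$, $y_\ell\leq i'\vee c$ for some $i,i'\in I_\ell$. Putting $d_m=x_m\vee y_m\in I_m$ for $m\neq\ell$ and raising the off-$\ell$ coordinates of $\bfx$ and $\bfy$ up to $d_m$ produces, by increasingness, two tuples $\bfx',\bfy'\in F$ that agree off coordinate $\ell$, hence $\bfx'\approx\bfy'$; the meet-closure axiom then gives $\bfx'\wedge\bfy'\in F$. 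Its $\ell$-th coordinate is $x_\ell\wedge y_\ell\leq(i\vee b)\wedge(i'\vee c)$, which expands by distributivity into a join of $i\wedge i'$, $i\wedge c$, $b\wedge i'$ and $b\wedge c$, each lying in $I_\ell$, so $x_\ell\wedge y_\ell\in I_\ell$, while its remaining coordinates are the $d_m\in I_m$. Thus $\bfx'\wedge\bfy'\in(I_1\times\cdots\times I_k)\cap F$, a contradiction, so $I_\ell$ is prime. Finally I would set $G=(\alg{L}\setminus I_1)+\cdots+(\alg{L}\setminus I_k)$, a prime filtroid whose complement is $I_1\times\cdots\times I_k$: then $\bfa\notin G$ since $\bfa\in I_1\times\cdots\times I_k$, and $F\subseteq G$ since $F$ is disjoint from that product. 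This yields the required separation and hence the theorem.
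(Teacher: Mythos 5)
Your proof is correct, but there is no in-paper argument to compare it with: the paper does not prove this statement at all. It is quoted directly from Bell \cite[Theorem 1.2]{Bell1996} and used as a black box, its only role being to supply the final contradiction in the Appendix proof of Proposition \ref{lem:truth}. Judged on its own, your argument is a sound $k$-dimensional analogue of the Birkhoff--Stone prime-filter separation theorem: the reduction of the intersection statement to a separation statement, the passage to complements (so that excluding $\bfa$ from a prime filtroid $F_1+\cdots+F_k$ means exhibiting prime ideals $I_\ell=\alg{L}\setminus F_\ell$ with $\bfa\in I_1\times\cdots\times I_k$ and $(I_1\times\cdots\times I_k)\cap F=\varnothing$), the Zorn argument on componentwise-ordered tuples of ideals, and the extraction of primeness from maximality all check out; in particular the key computation $(i\vee b)\wedge(i'\vee c)=(i\wedge i')\vee(i\wedge c)\vee(b\wedge i')\vee(b\wedge c)$, combined with raising the off-$\ell$ coordinates so that the two witnesses become $\approx$-related and the filtroid meet axiom applies, is exactly the right use of distributivity. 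Two routine verifications are left implicit and should be recorded for completeness: first, that $\alg{L}\setminus I_\ell$ is indeed a prime filter (this needs $I_\ell$ nonempty, proper and prime, all of which you establish); second, that for arbitrary prime filters $F_1,\ldots,F_k$ the set $F_1+\cdots+F_k$ is automatically a \emph{proper filtroid}, since the paper's definition of a prime filtroid presupposes this. Increasingness, the $\{\bfone\}+\cdots+\{\bfone\}$ condition and properness are immediate; for the $\approx$-meet condition, if $\bfx\approx\bfy$ agree off coordinate $\ell_0$ and some witnessing coordinate of $\bfx$ or $\bfy$ differs from $\ell_0$, then the meet inherits that witness from the shared coordinates, and otherwise both witnesses lie in coordinate $\ell_0$ and one uses meet-closure of $F_{\ell_0}$. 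Without this check, your $G$ is not yet known to be a prime filtroid in the sense required by the statement.
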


\begin{defn}\label{defi:termes} 
Let $i\in\{1, \ldots, n\}$. We define the function $\tau_{i/n}\colon \lucas_{n}\to \lucas_{n}$ by
\[
\tau_{i/n}(x)=\begin{cases}
0 & x<\frac{i}{n},\\
1 & x\geq \frac{i}{n},\end{cases}
\]
and we  assume that $\tau_{{i}/{n}}$ is the interpretation on $\lucas_n$ of an algebraic term which is a composition of finitely many copies of the maps $x \mapsto x \oplus x$ and $x \mapsto x \odot x$  alone (the proof of existence of such a term appears in \cite{Ost88}).
\end{defn}

\begin{lem}
Let $\alg{A}\in \class{MMV}_n^\lang$ and $\nabla$ be a $k$-ary modality whose associated relation on $\alg{A}^+$ is $R$. For any $u, v_1, \ldots, v_k \in \alg{A}^+$, the following conditions are equivalent.
\begin{enumerate}[(i)]\label{lem:R}
\item\label{it:mlk01} $\bfv\in Ru$.
\item\label{it:mlk02} $u(\nabla (\bfa))\leq \bigvee_{i=1}^k v_i(a_i)$ for every $\bfa\in \alg{A}^k$.
\item\label{it:mlk03} $\big(v_1^{-1}(1)+\cdots +v_k^{-1}(1)\big)\cap \idempo{\alg{A}}^k$ is a prime $\idempo{\alg{A}}^k$-filtroid that contains $\nabla^{-1}(u^{-1}(1)) \cap \idempo{\alg{A}}^k$.
\end{enumerate}	
\end{lem}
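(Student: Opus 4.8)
The plan is to prove the equivalences through the cycle $(\ref{it:mlk02}) \Rightarrow (\ref{it:mlk01}) \Rightarrow (\ref{it:mlk03}) \Rightarrow (\ref{it:mlk02})$, isolating beforehand the one algebraic fact that carries the argument. For each $i \in \{1, \ldots, n\}$ the truncation term $\tau_{i/n}$ commutes with $\nabla$ in every argument: since $\tau_{i/n}$ is a composition of the maps $x \mapsto x \oplus x$ and $x \mapsto x \odot x$, an induction on the length of that composition, using the equations $\nabla(\bfx \star \bfx) = \nabla\bfx \star \nabla\bfx$ of \eqref{eq:dualop} read coordinatewise, gives
\[
\nabla(\tau_{i/n}(a_1), \ldots, \tau_{i/n}(a_k)) = \tau_{i/n}(\nabla(a_1, \ldots, a_k)), \qquad \bfa \in \alg{A}^k.
\]
I will also use that each $\tau_{i/n}(a)$ is idempotent (the equation $\tau_{i/n}(x) \oplus \tau_{i/n}(x) = \tau_{i/n}(x)$ holds in $\lucas_n$, hence in $\alg{A}$) and that, since $u$ is a homomorphism, $u(\tau_{i/n}(b)) = \tau_{i/n}(u(b))$, which equals $1$ precisely when $u(b) \geq \frac{i}{n}$.

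The two remaining implications are routine. For $(\ref{it:mlk02}) \Rightarrow (\ref{it:mlk01})$: if $u(\nabla\bfa) = 1$ then $1 \leq \bigvee_{\ell} v_\ell(a_\ell)$, so this join equals $1$, which is exactly the defining condition \eqref{eq:dual} of $\bfv \in Ru$. For $(\ref{it:mlk01}) \Rightarrow (\ref{it:mlk03})$, set $F_\ell = v_\ell^{-1}(1) \cap \idempo{\alg{A}}$; each $F_\ell$ is the prime filter of $\idempo{\alg{A}}$ determined by the Boolean homomorphism $v_\ell|_{\idempo{\alg{A}}} \colon \idempo{\alg{A}} \to \idempo{\lucas_n}$, and one checks that $(v_1^{-1}(1) + \cdots + v_k^{-1}(1)) \cap \idempo{\alg{A}}^k = F_1 + \cdots + F_k$, a proper (since $\mathbf{0}$ lies in no $F_\ell$) and hence prime $\idempo{\alg{A}}^k$-filtroid by definition. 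The containment $\nabla^{-1}(u^{-1}(1)) \cap \idempo{\alg{A}}^k \subseteq F_1 + \cdots + F_k$ is then nothing but condition $(\ref{it:mlk01})$ read on idempotent tuples.

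The substantive step is $(\ref{it:mlk03}) \Rightarrow (\ref{it:mlk02})$, where the idempotent-level containment must be promoted to an inequality for arbitrary arguments. Given $\bfa \in \alg{A}^k$, fix any $i$ with $u(\nabla\bfa) \geq \frac{i}{n}$. Then the commuting identity gives $u(\nabla(\tau_{i/n}(a_1), \ldots, \tau_{i/n}(a_k))) = u(\tau_{i/n}(\nabla\bfa)) = 1$, while $(\tau_{i/n}(a_1), \ldots, \tau_{i/n}(a_k))$ is an idempotent tuple; so the containment of $(\ref{it:mlk03})$ yields an index $\ell$ with $v_\ell(\tau_{i/n}(a_\ell)) = 1$, that is $v_\ell(a_\ell) \geq \frac{i}{n}$, and therefore $\bigvee_{\ell} v_\ell(a_\ell) \geq \frac{i}{n}$. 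Taking $i$ to be the largest integer with $\frac{i}{n} \leq u(\nabla\bfa)$ (the inequality being trivial when $u(\nabla\bfa) = 0$) gives $u(\nabla\bfa) \leq \bigvee_{\ell} v_\ell(a_\ell)$, which is $(\ref{it:mlk02})$. I expect this final promotion — and pinning down the commutation of $\nabla$ with the truncation terms precisely — to be the only real difficulty; the filtroid identifications are bookkeeping that follow directly from the definitions.
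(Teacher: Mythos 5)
Your proof is correct, but it is organized differently from the paper's. The paper proves the two bi-implications (i)$\Leftrightarrow$(ii) and (i)$\Leftrightarrow$(iii): its direction (i)$\Rightarrow$(ii) uses the truncation terms $\tau_{i/n}$ exactly as you do (with the commutation $\nabla(\tau_{i/n}(a_1),\ldots,\tau_{i/n}(a_k))=\tau_{i/n}(\nabla\bfa)$ left implicit), while its direction (iii)$\Rightarrow$(i) is delegated to Lemma \ref{lem:ba}, whose proof passes from idempotent tuples to arbitrary ones by raising elements to $2n$-th powers. You instead close the cycle (ii)$\Rightarrow$(i)$\Rightarrow$(iii)$\Rightarrow$(ii), and your single step (iii)$\Rightarrow$(ii) does the work of both of the paper's nontrivial directions: from the idempotent-level containment you recover the inequality for arbitrary $\bfa\in\alg{A}^k$ directly via $\tau_{i/n}$, so Lemma \ref{lem:ba} and the $2n$-th power trick are never needed. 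What your route buys is self-containedness, plus an explicit statement and proof (induction on the composition length, using the equations \eqref{eq:dualop} coordinatewise) of the commutation identity that the paper uses tacitly; what the paper's route buys is brevity, since it reuses Lemma \ref{lem:ba}, already established for the isomorphism $\alg{A}^+\cong\idempo{\alg{A}}^+$. Both arguments ultimately hinge on the same two facts: $\tau_{i/n}$ commutes with $\nabla$ when applied to all coordinates simultaneously, and each $\tau_{i/n}(a)$ is idempotent, so that data on $\idempo{\alg{A}}^k$ controls behaviour on all of $\alg{A}^k$. Your filtroid bookkeeping in (i)$\Rightarrow$(iii) (properness via $\mathbf{0}$, primeness via the ultrafilters $F_\ell=v_\ell^{-1}(1)\cap\idempo{\alg{A}}$) is accurate and matches what the paper dismisses as following "by definition of $R$ and the concept of prime filtroid."
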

\begin{proof}
(\ref{it:mlk02}) $\implies$ (\ref{it:mlk01}) follows by definition of $R$. 

(\ref{it:mlk01}) $\implies$ (\ref{it:mlk02}) Assume that there is some $\bfa \in \alg{A}^k$ and $\frac{i}{n}\in \lucas_n$ such that $v_\ell(a_\ell)< \frac{i}{n}\leq u(\nabla(\bfa))$ for every $\ell\leq k$. It follows that $v_\ell(\tau_{i/n}(a_\ell))=0$ for every $\ell \leq k$ while $u(\nabla(\tau_{i/n}(\bfa))=1$, a contradiction by definition of $R$.

(\ref{it:mlk01}) $\implies$ (\ref{it:mlk03}) follows by definition of $R$ and the concept of prime $\idempo{\alg{A}}^k$-filtroid.

(\ref{it:mlk01}) $\implies$ (\ref{it:mlk03}) is a consequence of Lemma \ref{lem:ba}.
\end{proof}

We give the proof of the polymodal version of  \cite[Proposition 5.6]{Teheux2012}.

\begin{proof}[Proof of Proposition \ref{lem:truth}]
We proceed by induction on the length of $\phi \in \Form_\lang$. The only non-trivial case is $\phi=\nabla(\phi_1, \ldots, \phi_k)$ for a $k$-ary modality $\nabla$. We let $\alpha=\alpha_\phi$  and $\alpha_\ell=\alpha_{\phi_\ell}$ for every $\ell\leq k$. We have to prove
\begin{equation}
u(\nabla(\alpha_1, \ldots, \alpha_k ))=\min\{\bigvee_{\ell=1}^k v_i(\alpha_\ell) \mid \bfv \in Ru\}.
\end{equation}
The inequality $\leq$ is (\ref{it:mlk01}) $\implies$ (\ref{it:mlk02}) in Lemma \ref{lem:R}. To prove the converse inequality, assume on the contrary that 
\[
u(\nabla(\alpha_1, \ldots, \alpha_k )) < \frac{i}{n}\leq \min\{\bigvee_{\ell=1}^k v_\ell(\alpha_\ell) \mid \bfv \in Ru\},
\]
for some $i\leq n$. It follows that $u(\nabla(\tau_{i/n}(\alpha_1), \ldots, \tau_{i/n}(\alpha_k )))=0$ while for every $\bfv\in Ru$ we have $v_\ell(\tau_{i/n}(\alpha_\ell))=1$ for some $\ell \leq k$. According to (\ref{it:mlk01}) $\iff$ (\ref{it:mlk03}) in Lemma \ref{lem:R}, this means that $(\tau_{i/n}(\alpha_1), \ldots, \tau_{i/n}(\alpha_k ))$ is in every prime $\idempo{\alg{A}}^k$-filtroid that contains $\nabla^{-1}(u^{-1}(1)) \cap \idempo{\alg{A}}^k$ but not in $\nabla^{-1}(u^{-1}(1)) \cap \idempo{\alg{A}}^k$. The contradiction follows from Theorem \ref{thm:prime}.
\end{proof}

\bibliographystyle{plain}
%\bibliography{Goldblatt-Thomason}

\end{document}